\documentclass[a4paper]{amsart}
\usepackage[utf8]{inputenc}
\usepackage[english]{babel}
\usepackage{fancyhdr}
\usepackage{amsmath}
\usepackage{amsfonts,amstext}
\usepackage{amssymb,amsthm,amscd,amsxtra,wasysym,graphicx}
\usepackage{grffile}
\usepackage{mathrsfs,esint,comment}
\usepackage{tikz-cd}
\usepackage{enumitem,mathtools,dsfont}
\usepackage{palatino,mathpazo}
\usepackage{xcolor}

\def\Bibtex{{\rm B\kern-.05em{\sc i\kern-.025em b}\kern-0.08em T\kern-.1667em\lower.7ex\hbox{E}\kern-.125emX}}
 \usepackage{hyperref}
\hypersetup{linktocpage=true,
	unicode=false,          
	pdftoolbar=true,       
	pdfmenubar=true,       
	pdffitwindow=false,     
	pdfstartview={FitH}, 
	pdftitle={Monge-Amp\`ere equations},   
	pdfauthor={Quang-Tuan Dang},   
	colorlinks=true,  
	linkcolor=purple,         
	citecolor=blue,        
	filecolor=green,
	urlcolor=blue}          

\usepackage{cleveref}
\usepackage{indentfirst}
\theoremstyle{plain}
\newtheorem{theorem}{Theorem}
\newtheorem{lemma}[theorem]{Lemma}

\newtheorem{proposition}[theorem]{Proposition}
\newtheorem{example}[theorem]{Example}

\theoremstyle{definition}
\newtheorem{definition}[theorem]{Definition}
\newtheorem{remark}[theorem]{Remark}

\theoremstyle{plain}
\newtheorem{bigthmm}{Theorem}

\numberwithin{theorem}{section}
\numberwithin{equation}{section}
 
\DeclareMathOperator \PSH {{\rm PSH}}

\DeclareMathOperator \MA {{\rm MA}}

\DeclareMathOperator \Vol {{\rm Vol}}

\DeclareMathOperator \Capa {{\rm Cap}}

\def\f{\varphi}
\def\dc{dd^c}

\begin{document}
	
	\title[Uniform estimates for complex Monge--Amp\`ere equations]{Uniform estimates for complex Monge-Amp\`ere equations: big cohomology classes}
	\author{Quang-Tuan Dang, Lei Zhang and Bin Zhou}
		\address{Yau Mathmatical Sciences Center, Tsinghua University, Beiing 100084}
		\email{dangquangtuan10@gmail.com $\&$ dangqt@mail.tsinghua.edu.cn}
        \address{Yau Mathmatical Sciences Center, Tsinghua University, Beiing 100084}
        \email{leizhang92@mail.tsinghua.edu.cn}
        \address{School of Mathematical Sciences, Peking University, Beijing 100871}
        \email{bzhou@pku.edu.cn}
	\date{\today}
	\keywords{Complex Monge--Amp\`ere equations, uniform a priori estimates, big cohomology classes, Moser-Trudinger inequalities}
	\subjclass[2020]{32U20, 32W20, 32U05}

	\begin{abstract} We prove uniform a priori estimates for solutions to degenerate complex Monge--Amp\`ere equations in big cohomology classes, using both auxiliary-function technique developed by Guo, Phong and Tong [On $L^\infty$-estimates for complex Monge-Amp\`ere equations, Ann. of Math. (2) 198 (2023), no.1, 393-418], and quasi-psh envelope approach developed by Guedj and Lu [Quasi-plurisubharmonic envelopes 1: Uniform estimates on K\"ahler manifolds, J. Eur. Math. Soc. (JEMS) 27 (2025), no. 3, 1185-1208.]. As an application, we apply our method to prove the Moser-Trudinger and Brezis-Merle-type inequalities for complex Monge-Amp\`ere equations.
	\end{abstract}
	\maketitle
	\tableofcontents

	\section{Introduction}\label{sect: intro}


Over the last few decades, finding canonical metrics on complex varieties has been a fundamental problem in complex geometry.
Since Yau's solution to the Calabi conjecture~\cite{yau1978ricci}, complex Monge--Ampère equations have become one of the most  powerful tools in K\"ahler geometry. A crucial step in order to prove the existence of a solution to
such equations is to establish a uniform a priori estimate.

 As evidenced by recent developments in K\"ahler geometry in connection with the Minimal Model Program, it is natural and necessary to allow the underlying varieties in question to be singular. They led to the study of degenerate complex Monge--Amp\`ere equations and the construction of singular K\"ahler--Einstein metrics; see, e.g.,~\cite{eyssidieux2009singular,demailly2010degenerate, song2012canonical,tosatti2010adiabatic,berman2013variational,berman2014kahler,berman2019kahler} and references therein.
A major breakthrough in the field was achieved by Ko\l odziej \cite{kolodziej1998complex}, who proved a uniform a priori estimate for solutions of degenerate complex Monge--Amp\`ere equations on compact K\"ahler manifolds assuming only weak integrability. This result laid the foundation for the modern pluripotential-theoretic approach to complex Monge--Amp\`ere equations. Since then, the pursuit of optimal and versatile approaches to such uniform estimates has remained an active and fertile area of research.

During the last few years, there has been renewed interest in understanding the mechanism behind Ko\l odziej-type estimates and in obtaining sharper quantitative versions. A remarkable development in this direction is the introduction by Guo, Phong, and Tong ~\cite{GuoPhongTong23-estimates} of auxiliary-function techniques that provide a new route to uniform estimates and Moser--Trudinger-type inequalities for complex Monge--Ampère equations, which is inspired by the work of Chen and Cheng~\cite{ChenCheng21-csck1,ChenCheng21-csck2}, and  Wang, Wang, and Zhou~\cite{WangWangZhou21-estimate}. We also refer the interested readers to \cite{GuoPhongTong23-estimates,Guo-Phong2024-estimates,GuoPhong23-auxiliary,GuoPhongTong25-auxiliary,Qiao2025-sharp} for further directions. {However, it is not easy to directly apply their approach to the case of big cohomology classes.}

At the same time, Guedj and Lu~\cite{guedj2021quasi} developed a novel envelope-theoretic approach to degenerate complex Monge--Ampère equations. Their theory of quasi-plurisubharmonic envelopes~\cite{guedj2021quasi1,guedj2022quasi,guedj2021quasi} provides a flexible framework for studying Monge--Amp\`ere equations in both K\"ahler and Hermitian settings and has led to several important advances in pluripotential theory. While this approach is particularly well suited to the degenerate setting and yields robust qualitative results, it generally provides less explicit quantitative estimates than the auxiliary-function method.

The main goal of this paper is to bridge the auxiliary-function method of Guo, Phong, and Tong with the envelope techniques of Guedj and Lu in the context of big cohomology classes. We show that these two methods can be effectively combined in the setting of big cohomology classes. By exploiting the strengths of both methods, we obtain a new proof of uniform estimates for solutions of degenerate complex Monge--Ampère equations in big classes. Furthermore, our approach yields refined quantitative integrability and energy estimates for the corresponding solutions, including Brezis--Merle and Moser--Trudinger type inequalities.

\medskip
To state our main results, we fix some notation and terminology. Let $X$ be a $n$-dimensional compact K\"ahler manifold equipped with a K\"ahler metric $\omega_X$, normalized by $\int_X\omega_X^n=1$. Let $dV$ denote the Lebesgue measure on $X$. 
Let $\theta$ be a closed smooth real $(1,1)$-form.
	  We say that $u$ is $\theta$-plurisubharmonic ($\theta$-psh)  if it is locally the sum of a psh and a smooth function and satisfies $\theta_u:=\theta+\dc u\geq 0$ in the weak sense of currents.  
    We let $\PSH(X, \theta)$ denote the set of $\theta$-psh functions on $X$. Recall that
	the cohomology class $\{\theta\}$ is {\em big} if there exists $\rho\in\PSH(X,\theta)$ such that $\theta+\dc\rho\geq \delta{\omega_X}$ for some small constant $\delta>0$. Let $\Vol(\theta)$ denote the volume of a big cohomology class $\{\theta\}$. 

\medskip
We are interested in studying  the complex Monge--Amp\`ere equation type \begin{equation}\label{eq: dcmae}
		\MA_\theta(\varphi):=\frac{1}{\Vol(\theta)}(\theta+\dc u)^n=\mu,\quad\sup_X \varphi=0, 
	\end{equation}
	where
	$\mu$ is a probability measure on $X$ that puts no mass on pluripolar subsets, $\varphi$ is the unknown $\theta$-{psh} function, and the left-hand side of~\eqref{eq: dcmae} denotes the non-pluripolar Monge--Amp\`ere measure, constructed in ~\cite{bedford1987fine,boucksom2010monge}; cf. Section~\ref{sect: nonpluripolar}. We let $\mathcal{E}(X,\theta)$ denote the set of $\theta$-psh functions with {\em full Monge-Amp\`ere mass}, i.e., $\int_X\MA_\theta(\varphi)=1$.

\medskip 

Our first main result establishes a uniform estimate for solutions of Monge-Ampère equations with densities belonging to a large class of Orlicz spaces defined by a weight function satisfying \hyperlink{conditionK}{condition (K)}. 
This generalizes the classical assumptions of $L^{1+\varepsilon}$-integrability and aligns with the most recent developments in the field.
\begin{bigthmm}\label{main} Let $(X,\omega_X)$ be a compact K\"ahler manifold. Let $\theta$ be a real smooth closed (1,1) form representing a big cohomology class. Assume that $\theta\leq A\omega_X$ for some $A>0$.
    Let $\mu= fdV$ be a probability measure on $X$, where $dV$ is the Lebesgue measure and $f\in L^w(dV)$ with density $w$ satisfying \hyperlink{conditionK}{condition~(K)}. 
    Then there exists a unique solution $\varphi\in\mathcal{E}(X,\theta)$ with minimal singularities to the following equation \begin{equation}\label{cmae-big}
    \MA_\theta(\varphi)=\mu,\quad \sup_X\varphi=-1.
\end{equation}
    More precisely, there is a constant $C$ depending on $\omega_X$, $A$, $dV$, $\theta$, $n$, and $\|f\|_w$ such that 
    \[ \varphi\geq V_\theta-C.\]
\end{bigthmm}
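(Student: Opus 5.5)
Existence and uniqueness of a solution $\varphi\in\mathcal{E}(X,\theta)$ are not the issue: since $\mu$ is non-pluripolar, the variational theory of Boucksom--Eyssidieux--Guedj--Zeriahi gives a unique full-mass solution normalized by $\sup_X\varphi=-1$. So the plan is to establish the a priori bound $\varphi\ge V_\theta-C$. This yields minimal singularities.

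\textbf{Reduction.} First I reduce to smooth data by approximation. Take $f_j$ smooth, positive, converging to $f$ in $L^w$ with $\|f_j\|_w\le 2\|f\|_w$. Take K\"ahler perturbations $\theta+\varepsilon\omega_X$, whose solutions have minimal singularities and are smooth on the ample locus, and which decrease to $\varphi$ by stability. Hence it suffices to prove the estimate with a constant independent of $j$ and $\varepsilon$.

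\textbf{Envelope step.} Following Guedj--Lu, I compare $\varphi$ with the envelope $V_\theta$, which satisfies $\MA_\theta(V_\theta)\le \mathbf 1_{\{V_\theta=0\}}\theta^n/\Vol(\theta)$ with $\theta\le A\omega_X$. For $s>0$, consider the sublevel sets $\Omega_s=\{\varphi<V_\theta-s\}$ and set
\[
a(s)=\mu(\Omega_s).
\]
The goal is a De Giorgi type decay inequality of the form
\[
r\,a(s+r)\le C\,\Phi\bigl(a(s)\bigr)\,a(s),
\]
where $\Phi(t)\to0$ as $t\to0$ at a rate dictated by the weight $w$ through condition (K). Iterating this inequality in the standard way gives $a(s)=0$ for $s\ge s_0(\|f\|_w)$. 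The starting value of $s$ comes from the $L^1$ compactness of normalized $\theta$-psh functions.

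\textbf{Decay inequality via an auxiliary function.} To obtain the decay I use the Guo--Phong--Tong auxiliary equation on $\Omega_s$, adapted to the big setting. Let $\psi_s$ be the solution in $\mathcal{E}(X,\theta)$, with minimal singularities, of
\[
\MA_\theta(\psi_s)=\frac{\mathbf 1_{\Omega_s}\,(V_\theta-s-\varphi)\,f\,dV}{\int_{\Omega_s}(V_\theta-s-\varphi)f\,dV}.
\]
Instead of solving a Dirichlet problem, I take $\psi_s$ global; this is legitimate because its density lies in $L^{1+\delta}$ in the smooth approximation. Next I compare $\varphi$ against a suitable combination of $V_\theta$ and $\psi_s$, namely
\[
-\epsilon(-\psi_s+V_\theta+\Lambda)^{n/(n+1)}+V_\theta-s-\varphi,
\]
using the domination principle or a maximum principle on $\Omega_s$. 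In the big setting this comparison must be done relative to $V_\theta$, and I use the envelope description to control the contact set.

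The comparison yields $(V_\theta-s-\varphi)^{(n+1)/n}\lesssim(V_\theta-\psi_s)$ on $\Omega_s$. Then the uniform Skoda integrability for $\theta$-psh functions with minimal singularities, $\int e^{-\alpha(\psi-V_\theta)}dV\le C$, combined with the Young inequality for the pair $(w,w^*)$ from condition (K), gives the required decay of $\int_{\Omega_s}(V_\theta-s-\varphi)^p f$.

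\textbf{Main obstacle.} I expect the hardest step to be the comparison argument in the big case. There $\psi_s$ and $\varphi$ are only smooth on the ample locus, and $V_\theta$ is merely $C^{1,1}$ there. So the maximum principle must be replaced by a pluripotential domination principle in $\mathcal{E}$, applied to $\max$-constructions normalized by $V_\theta$. My first attempt would be to work with $\theta+\varepsilon\omega_X$, where everything is K\"ahler and the Guo--Phong--Tong argument applies verbatim. I would then check that every constant depends only on $\Vol(\theta)$, $A$, the Skoda constant and $\|f\|_w$, and hence is uniform as $\varepsilon\to0$.
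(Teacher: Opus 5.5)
Your proposal leaves open exactly the two steps that carry the proof. The fallback you offer for the first one is also wrong.

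\textbf{The comparison in the big setting.} You flag this as the main obstacle but do not supply it. The fallback fails. If $\{\theta\}$ is big but not nef, then $\{\theta+\varepsilon\omega_X\}$ stays outside the closed nef cone for small $\varepsilon$, so nothing becomes K\"ahler. Also $V_{\theta+\varepsilon\omega_X}$ remains unbounded, since a class carrying a positive current with bounded potential is nef. Hence no Guo--Phong--Tong bound $\varphi_\varepsilon\ge -C$ can hold uniformly, and you are back to the original problem.

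What is needed is a pluripotential replacement for the maximum principle; no smoothness is required. The paper (Lemma~\ref{lem: key}) takes the envelope $u=P_\theta\bigl(V_\theta-\chi(\mathbf 1_{\widetilde\Omega_s}(V_\theta-\varphi-s))\bigr)$. It uses Lemma~\ref{lemma: ddl511} on the contact set to get $\MA_\theta(u)\le\frac12\MA_\theta(v_s)$ on $\{u<v_s\}$, and concludes $v_s\le u$ by the domination principle (Proposition~\ref{prop: domination}).

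Your own direction also closes without envelopes:
\begin{itemize}
\item Put $\eta(t)=\epsilon(t+\Lambda)^{n/(n+1)}$ with $\Lambda=(\frac{n\epsilon}{n+1})^{n+1}$, so that $\eta'\le 1$.
\item By Lemma~\ref{lemma: ddl511}, $\Psi_s:=V_\theta-\eta(V_\theta-\psi_s)$ is $\theta$-psh with $\theta_{\Psi_s}^n\ge\eta'(V_\theta-\psi_s)^n\theta_{\psi_s}^n$.
\item On $\{\varphi<\Psi_s-s\}\subset\Omega_s$ this gives $\theta_{\Psi_s}^n\ge(\frac{n}{n+1})^n\epsilon^{n+1}A_s^{-1}\theta_\varphi^n\ge 2\theta_\varphi^n$, once you choose $\epsilon^{n+1}=2(\frac{n+1}{n})^nA_s$.
\item The domination principle applied to $\varphi\in\mathcal E(X,\theta)$ then yields $V_\theta-\varphi-s\le\eta(V_\theta-\psi_s)$.
\end{itemize}
This needs only $\psi_s\in\mathcal E(X,\theta)$. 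Your appeal to minimal singularities of $\psi_s$ via its $L^{1+\delta}$ density presupposes the $L^{1+\delta}$ case of the estimate, and it is unnecessary.

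\textbf{Where condition (K) enters.} Knowing $\Phi(t)\to0$ and iterating ``in the standard way'' does not suffice. From $t\,a(s+t)\le C\Phi(a(s))a(s)$, finite-time extinction needs a Dini-type condition such as $\sum_k\Phi(2^{-k})<\infty$. For example, $a(s)=e^{-s^2}$ satisfies the inequality with $\Phi(a)\approx(\log\frac1a)^{-1/2}$ and never vanishes.

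The Young-inequality argument you sketch gives, for $w(t)=t(\log(1+t))^p$, only $A_s\lesssim a(s)^{1+\frac1n-\frac1p}$. That is a gain exactly when $p>n$. For admissible weights such as $t(\log(1+t))^n(\log(1+\log(1+t)))^p$ there is no power gain. You never show how $\int^\infty h^{-1/n}<\infty$ produces a summable $\Phi$. This is the substance of the theorem, and it is missing.

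The paper avoids any Orlicz iteration. It runs De Giorgi only for $L^p$ densities (Theorem~\ref{thm: priori-Lp}, with $r=1/p^*$ and H\"older). It uses condition (K) once, in Lemma~\ref{lem: reduceLp}:
\begin{itemize}
\item It solves $\MA_\theta(v)=c\,w(f)\,dV$ and sets $\psi=V_\theta-\gamma(V_\theta-v)$, with $\gamma(s)=\int_B^{\alpha s+B}a^{-1}\alpha^{-1}c^{-1/n}h^{-1/n}(t)\,dt$.
\item This gives $\MA_\theta(\varphi)\le a^n\MA_\theta(\psi)+e^Be^{-\alpha v}dV$, with $e^{-\alpha v}\in L^2$ by Skoda.
\item $\gamma$ is bounded precisely because $\int^\infty h^{-1/n}<\infty$, so $\psi$ has minimal singularities.
\end{itemize}
Proposition~\ref{prop: moreestimate}, proved with the envelope $P_\theta(b\varphi-(b-1)\psi)$, then gives $\varphi\ge\frac{\psi+V_\theta}{2}-C\ge V_\theta-C'$.
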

Here, $V_\theta=\sup\{u\in\PSH(X,\theta):u\leq 0\}$ denotes the $\theta$-psh function with minimal singularities and $\|f\|_w$ denotes the Luxembourg norm of $f$ in the Orlicz sapce $L^w(dV)$, cf. Definition~\ref{def: Orlicz-norm}. The existence of solutions in the finite energy class $\mathcal{E}(X,\theta)$ was first established in the pioneering works of Guedj and Zeriahi~\cite{guedj2007weighted} and Boucksom, Eyssidieux, Guedj and Zeriahi~\cite{boucksom2010monge}. An alternative proof based on variational methods was later developed by Berman, Boucksom, Guedj, and Zeriahi~\cite{berman2013variational}. The uniqueness of solutions in $\mathcal{E}(X,\theta)$ follows from the work of Dinew~\cite{dinew2009uniqueness}. Building on Ko\l odziej's uniform estimate~\cite{kolodziej1998complex}, Boucksom, Eyssidieux, Guedj, and Zeriahi proved that the solution has minimal singularities whenever the density of the right-hand side belongs to a suitable Orlicz space; see \cite[Theorem 4.1 and Remark 4.6]{boucksom2010monge}. {We refer the reader to~\cite{demailly2010degenerate,darvas2021log,GGZ23-families} for further developments and related results. Very recently, Zhang and Zhang~\cite{ZhangZhang26} established a uniform estimate under the assumption of density in Orlicz spaces following the Ko\l odziej approach.} We emphasize that B\l ocki has provided a different approach in \cite{blocki2011uniform} based on the Alexandroff-Bakelman-Pucci maximum principle, requiring the reference form $\theta$ be Hermitian. His method has been applied to many contexts, including subsolutions~\cite{Szekelyhidi18-fully,Phong-To21-fully}, equations with gradient terms~\cite{Tosatti-Weinkove21-gradient}.

The main novelty of this work is the combination of two recently developed techniques. 
Rather than estimating the Monge--Ampère capacities of sublevel sets as in the classical Ko\l odziej theory, we construct suitable auxiliary functions following Guo, Phong and Tong~\cite{GuoPhongTong23-estimates} and exploit the quasi-psh envelope method of Guedj and Lu \cite{guedj2021quasi}. This allows us to obtain uniform and quantitative estimates directly at the level of the complex Monge--Ampère equation. 
Our approach thus extends with minor modifications to the Hermitian in the companion paper~\cite{DangZhangZhou26-estimate-hermit}.

    \medskip 
Our second result establishes Brezis--Merle type integrability estimates for solutions. Such estimates can be viewed as nonlinear analogs of the classical Brezis--Merle theorem and provide refined information on the singularity behavior of solutions.

     \begin{bigthmm}\label{main-Brezis-Merle}  Assume that $\mu= f\omega_X^n$ is a probability measure on $X$ and $f\in L^w(dV)$, where $w(t)=t(\log(1+t))^p$ for $p\geq 0$.
       Let $\varphi\in\mathcal{E}(X,\theta)$ be a solution to \eqref{cmae-big}. Then there exist constants $c>0$ and $C>0$ depending on  $\omega_X$, $n$, $p$, $A$, $c$, and $\|f\|_w$ such that
       \begin{itemize}
           \item For $p\in [0,n)$, we have
       \begin{equation}\label{eq0: Moser-Trudinger}
           \int_X e^{c(V_\theta-\varphi)^{\frac{n}{n-p}}}\omega_X^n\leq C.
       \end{equation} 
       \item for $p\geq  n$, we have for any $N>0 $ 
       \[ \int_Xe^{c(V_\theta-\varphi)^N}\omega_X^n\leq C.\]    
       \end{itemize}    
       Moreover, we also have the energy-like inequalities 
       \[\int(V_\theta-\varphi)^{r} f\omega_X^n\leq C, \]
       where $r=\frac{np}{n-p}$ if $p\in(0,n)$, and $r=N$ any positive constant if $p\geq n$.
   \end{bigthmm}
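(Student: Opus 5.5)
We adapt the Guo--Phong--Tong auxiliary-function argument to the big setting, replacing the Kähler background by $V_\theta$ and the envelope technique of Guedj--Lu. Set $\psi:=V_\theta-\varphi\ge 0$. By Theorem~\ref{main} (the weight $w(t)=t(\log(1+t))^p$ satisfies condition~(K) for $p>n$, and a non-quantitative $\mathcal E$-version suffices for the rest), $\varphi$ has minimal singularities, so $\psi$ is bounded. All constants below must depend only on the listed data and not on $\sup\psi$; the a priori boundedness is used only to justify the maximum principle arguments.

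\textbf{Step 1 (energy estimate).} The aim is to bound $E:=\int_X \psi^{r} f\,\omega_X^n$ for $r=\frac{np}{n-p}$ when $p<n$ (any $r$ when $p\ge n$). For $s>0$ I consider the superlevel sets $\Omega_s=\{\psi>s\}$ and $A_s=\int_{\Omega_s}f\,dV$. Following the auxiliary-function method, for each $s$ I solve the auxiliary equation
\[
(\theta+\dc u_s)^n=\Vol(\theta)\,\frac{(\psi-s)_+\,f}{A_{s,1}}\,dV ,\qquad A_{s,1}=\int (\psi-s)_+f\,dV,
\]
with $u_s\in\mathcal E(X,\theta)$ and $\sup u_s=0$ (solvable by \cite{boucksom2010monge}). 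I then compare $-(\psi-s)$ with $\varepsilon(-u_s+V_\theta+\Lambda)^{\frac{n}{n+1}}$ on $\Omega_s$, using the domination principle for envelopes in place of the ABP maximum principle. Concretely, I take the $\theta$-psh envelope $P_\theta(\min(\cdot))$ of the relevant combination and use that its Monge--Ampère measure is supported on the contact set, where it is dominated by $\MA_\theta(\varphi)$ and $\MA_\theta(u_s)$ through the mixed inequality $(\theta_a)^{k}\wedge(\theta_b)^{n-k}\ge f_a^{k/n}f_b^{(n-k)/n}$. This yields
\[
(\psi-s)_+^{\frac{n+1}{n}}\le C A_{s,1}^{1/n}\,(V_\theta-u_s+1).
\]
Skoda's uniform integrability for $\theta$-psh functions normalized by $\sup=0$ then gives $\int_{\Omega_s}\exp\bigl(\beta(\psi-s)^{\frac{n+1}{n}}/A_{s,1}^{1/n}\bigr)dV\le C$, with $\beta$ uniform. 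Here the bound on $V_\theta-u_s$ from below costs nothing, since $u_s\le V_\theta$.

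\textbf{Step 2 (De Giorgi-type iteration).} Applying the Young inequality for the pair $w(t)=t\log^p(1+t)$ and its conjugate $e^{t^{1/p}}$ to the exponential bound gives
\[
\int_{\Omega_s}(\psi-s)^{\frac{n+1}{n}p}f\,dV\lesssim A_{s,1}^{p/n}\,\|f\|_w .
\]
Hölder's inequality then links $A_{s,1}$ to $A_s$, producing a nonlinear recursion of the type $r A_{s+r}\le C A_s^{1+\delta}$ when $p>n$, which gives boundedness. For $p\le n$ the same inequalities instead produce a closed integral bound. Taking $s=0$ and interpolating gives $\int \psi^{\frac{np}{n-p}}f\,dV\le C$, which is the energy inequality. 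For $p\ge n$ every power $N$ is reachable, with the choice $r=N$ handled by a finite bootstrap.

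\textbf{Step 3 (Moser--Trudinger).} With $E$ controlled, I redo the comparison of Step~1 globally ($s=0$) with the exponent chosen as $\frac{n}{n-p}$ instead of $\frac{n+1}{n}$. This means using the auxiliary density $\psi^{q}f/\!\int\psi^{q}f$ with $q$ tuned so that its normalizing constant is exactly the energy bounded in Step~2. The result is $\psi^{\frac{n}{n-p}}\le C(V_\theta-u+1)$, and Skoda's theorem gives the stated exponential integrability. For $p\ge n$ the same argument works for every $N$.

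The main obstacle is the comparison step in the big setting. $V_\theta$ is not smooth and $\theta$ is not positive, so the GPT maximum-principle argument must be replaced by an envelope/domination argument. This requires checking that the non-pluripolar Monge--Ampère measure of the envelope concentrates on the contact set, which holds in the minimal-singularities class and is why Theorem~\ref{main} is invoked first. It also requires verifying that the mixed Monge--Ampère inequality holds for non-pluripolar products of functions with minimal singularities.
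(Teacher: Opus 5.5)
\textbf{There is a genuine gap at the foundation of your argument.} You assert that $\varphi$ has minimal singularities, but this fails for the range that matters:
\begin{itemize}
\item For $w(t)=t(\log(1+t))^p$, condition (K) reads $\int_1^\infty t^{-p/n}\,dt<\infty$, i.e.\ $p>n$. So Theorem~\ref{main} is unavailable for $p\le n$.
\item For $p<n$, $V_\theta-\varphi$ is in general unbounded. Locally, $-(\log\frac{1}{|z|})^{b}$ with $0<b<1-\frac{p}{n}$ is psh with Monge--Amp\`ere density in $L(\log L)^p$. Controlling this unboundedness is the whole content of the theorem.
\end{itemize}
Without boundedness, none of the energies $E_r=\int_X(V_\theta-\varphi)^rf\,dV$ is known to be finite, and your scheme uses finiteness twice:
\begin{itemize}
\item To normalize the auxiliary density $(\psi-s)_+f/A_{s,1}$. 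For $p<\frac{n}{n+1}$ the sharp exponent $\frac{np}{n-p}$ is below $1$, so there is no reason for $\int_X\psi f\,dV$ to be finite.
\item To conclude $E\le C$ from inequalities of the form $E\le C(1+E^{\kappa})$ with $\kappa<1$.
\end{itemize}
This finiteness must be supplied separately. One way is to truncate $f$ to bounded normalized densities $f_k$, whose solutions do have minimal singularities. You then prove the estimates uniformly in $k$ (since $\int_Xw(f_k)\,dV$ stays bounded) and pass to the limit by stability of solutions and Fatou. Another way, as in Remark~\ref{rem: Brezis-Merle}, is to first show $\varphi\in\mathcal{E}^r(X,\theta)$ via $\mathcal{E}^r(X,\theta)\subset L^r(\mu)$.

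\textbf{Second, even granting finiteness, Step 2 does not reach the exponent $\frac{np}{n-p}$.} The $r=1$ comparison plus Young's inequality gives at best $\int_X\psi^{\frac{(n+1)p}{n}}f\,dV\le C$, and only when $p\ge\frac{n}{n+1}$. But $\frac{(n+1)p}{n}<\frac{np}{n-p}$ for $p>\frac{n}{n+1}$. ``Interpolation'' cannot produce a higher moment than the one you control, and iterating $r\mapsto\frac{(n+r)p}{n}$ only approaches $\frac{np}{n-p}$ from below.

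Your display with ``$+1$'' also presupposes $A_{s,1}\lesssim 1$, which is what you are trying to prove. The comparison (Lemma~\ref{lem: key}) carries an additive term of size $A^{1/r}$. This must be handled by splitting off $\{\psi\le\kappa E_r^{1/r}\}$, as in the proof of Theorem~\ref{thm: Moser-Trudinger}.

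The correct closing step is to apply Lemma~\ref{lem: key} with $s=0$ directly for $r=\frac{np}{n-p}$. Young's inequality then returns $E_r$ itself, giving $E_r^{1-p/n}\le C$. This is the route sketched in the paper's remarks.

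\textbf{How the paper's proof differs.} The paper's main proof avoids energies altogether. Lemma~\ref{lem: reduceLp} and Proposition~\ref{prop: moreestimate}, which is valid for $\varphi\in\mathcal{E}(X,\theta)$ with no boundedness needed, give the pointwise bound $V_\theta-\varphi\le\frac12\gamma(V_\theta-v)+C$. Here $\MA_\theta(v)=c\,w(f)\,dV$ and $\gamma(s)\sim s^{1-p/n}$ is explicit (resp.\ $\log s$ at $p=n$). Skoda's theorem for $v$ then yields the exponential bounds, and the energy inequality follows a posteriori by H\"older--Young.
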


The estimates obtained here extend several recent results of Guo and Phong \cite{GuoPhong24-entropy} from the K\"ahler setting to the much more general framework of big cohomology classes. They also complement the envelope approach of Di Nezza, Guedj, and Lu~\cite{di2021finite} (see also~\cite{DiNezza24-entropy}) by showing that the latter naturally accommodates the auxiliary-function method. In particular, inspired by J. Liu~\cite{liu2024relative}, our work provides a unified framework in which uniform estimates, exponential integrability, and energy inequalities can be derived simultaneously. We refer interested readers to~\cite{cegrell19-measure,BermanBerndtsson14-symm,BermanBerndtsson22-Moser,
Ahag-Czyz19-MT,WangWangZhou20-Trudinger,di2021finite,DinhMarinescuVu23-Trudinger} for the local version.

\medskip

\noindent\textbf{Main steps of the proof.} We will describe the outline of the proof of Theorem~\ref{main} concerning uniform a priori estimates.

 \begin{itemize}
     \item {\it Step 1. Comparison with auxiliary {envelopes}.} We define the sub-level set $\Omega_s:=\{\varphi<V_\theta-s\}$ for $s>0$. 
Fix $r>0$. We solve an auxiliary complex Monge--Amp\`ere equation 
\begin{equation*}
    \MA_\theta(v_s)=\frac{\mathbf{1}_{\Omega_s}(V_\theta-\varphi-s)^r }{A_s}fdV,\; v_s\in\mathcal{E}(X,\theta),\quad\sup_X v_s=-1,
\end{equation*} where $A_s=\int_{\Omega_s}(V_\theta-\varphi-s)^rfdV$. The existence of solution $v_s$ is shown in~\cite{boucksom2010monge,berman2013variational}.
The key point is to compare $v_s$ with the original solution $\varphi$. Using Lemma~\ref{lem: key}, we establish the following
    \begin{equation}\label{eq: key-equation0}
       \frac{n}{n+r} (2A_s)^{-\frac{1}{n}}(V_\theta-\varphi-s)^{\frac{n+r}{n}}\leq V_\theta-v_s+\frac{n}{n+r}(2A_s)^{\frac{1}{r}}.
    \end{equation}
When $\theta$ is semi-positive, estimate \eqref{eq: key-equation0} is precisely the key inequality established in \cite[Lemma 1]{GuoPhongTong23-estimates}. A major difficulty in extending this approach to big cohomology classes stems from the singular nature of $\theta$-psh functions with minimal singularities. Consequently, the techniques of \cite{GuoPhongTong23-estimates} are no longer directly applicable. Our main innovation is to combine the auxiliary-function method with the quasi-psh envelope techniques, thus overcoming the difficulties caused by these singularities.
    

     \item {\it Step 2. Treating the case of densities in $L^{1+\varepsilon}$.} 
 From the estimate~\eqref{eq: key-equation0}, we can follow the argument of~\cite{GuoPhongTong23-estimates} to obtain
     \[t\phi(s+t)\leq C_0\phi(s)^{1+\delta_0}, \;\forall\, t\in [0,1],\; s>0, \]
where $\phi(s):=\mu(\Omega_s)$, $\delta_0>0$ depends on $n$ and $\varepsilon$, while $C_0$ depends only on $n$, $\omega_X$, $dV$, $A$, $\varepsilon$ and $\|f\|_{1+\varepsilon}$. By a standard lemma of De Giorgi, this inequality yields the desired uniform estimate.

     \item {\it Step 3. Reduction to the $L^{1+\varepsilon}$ case and conclusion.}  We follow the trick in~\cite[Theorem 2.1]{GuedjLu25-estimate} and in~\cite[Theorem 3.1]{liu2024relative} to reduce to the case of densities in $L^{1+\varepsilon}$.
 \end{itemize}

\medskip

\noindent\textbf{Organization of the paper.}
In Section~\ref{sect: recap}, we recall the necessary preliminaries on pluripotential theory, quasi-psh envelopes, and the relevant function spaces. Section~~\ref{sect: estimate} contains the proof of the uniform estimate, while Section~\ref{sect: inequality} is devoted to the Brezis--Merle-type estimates and the corresponding energy inequalities. 

\medskip 
\noindent\textbf{Acknowledgments.} Q.-T. Dang is supported by  the Shuimu Scholar program of Tsinghua University. L. Zhang is supported by Postdoctoral Fellowship Program of China  GZC20240867.
B. Zhou is partially supported by  National Key R$\&$D Program of China 2023YFA009900 and NSFC  Grant 12271008.

   \subsection*{Ethics declarations} The authors declare no conflict of interest.
	\section{Recap on pluripotential theory}\label{sect: recap}
Let $X$ be a $n$-dimensional compact K\"ahler manifold equipped with a K\"ahler metric $\omega_X$, normalized by $\int_X\omega_X^n=1$. Let $\theta$ be a closed smooth real $(1,1)$-form. In the whole article, we assume that $\theta\leq A\omega_X$ for $A>0$.
    
\subsection{Quasi-plurisubharmonic functions}
  A function $u:X\to\mathbb{R}\cup\{-\infty\}$ is called \emph{quasi-plurisubharmonic
	(qpsh)} if it can be locally written as the sum of a plurisubharmonic function and a smooth function. 
	We say that $u$ is \emph{$\theta$-plurisubharmonic ($\theta$-psh)}  if it is qpsh and $\theta_u:=\theta+\dc u\geq 0$ in the sense of currents.  We let $\PSH(X, \theta)$ denote the set of $\theta$-psh functions on $X$.

The set $\PSH(X,\theta)$ is endowed with 
	the $L^r(X,\omega_X^n)$-topology for all $r\geq 1$. By Hartogs' lemma, $\f\mapsto\sup_X \f$ is continuous in this weak topology. In particular, the set of $\f\in\PSH(X,\theta)$, with $\sup_X \f=0$ is compact in $L^r(X,\omega_X^n)$. 
	We refer the reader to~\cite{demaillycomplex,guedj2017degenerate} for basic properties of $\theta$-psh functions. It also enjoys uniform integrability properties.
    {We observe that if a function $u\in\PSH(X,\theta)$ then $u\in\PSH(X,A\omega_X)$. 

\begin{theorem}[{\cite{skoda1972sous,tian1987kahler,guedj2017degenerate}}]\label{thm: Skoda/Tian}
    There exists $\alpha=\alpha(n,A\omega_X)>0$ such that for all $\varphi\in\PSH(X,\theta)$,
    \begin{equation}\label{eq: alpha}
        \int_Xe^{-\alpha(\varphi-\sup_X\varphi)}\omega_X^n\leq C,
    \end{equation}
    where $C=C(n,A,\alpha)$ is independent of $\varphi$ and $\theta$. Moreover, for all $\varphi\in\PSH(X,\theta)$, 
    \[\int_X(\varphi-\sup_X\varphi)\omega_X^n\geq -C. \]
\end{theorem}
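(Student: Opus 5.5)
The plan is to reduce everything to the classical Skoda--Tian uniform integrability statement for the single Kähler class $\{A\omega_X\}$. The starting observation, already made just before the statement, is that $\theta\le A\omega_X$ gives $A\omega_X+\dc\varphi\ge\theta+\dc\varphi\ge0$. Hence $\PSH(X,\theta)\subset\PSH(X,A\omega_X)$. Replacing $\varphi$ by $\varphi-\sup_X\varphi$, which changes neither side of \eqref{eq: alpha}, we may assume $\sup_X\varphi=0$. It then suffices to prove both inequalities for the compact family
\[\mathcal{F}_0=\{u\in\PSH(X,A\omega_X):\sup_Xu=0\},\]
with constants depending only on $(X,\omega_X)$, $n$ and $A$. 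This automatically makes them independent of $\theta$ and of $\varphi$.

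\textbf{Step 1: the $L^1$ bound.} I would argue by compactness. $\mathcal{F}_0$ is compact in $L^1(\omega_X^n)$ by Hartogs' lemma and the continuity of $u\mapsto\sup_Xu$ recalled above. The map $u\mapsto\int_Xu\,\omega_X^n$ is continuous on it and finite everywhere, since qpsh functions are in $L^1$, so it is bounded below by some $-C$. To get an explicit constant instead, I would cover $X$ by finitely many coordinate balls on which $A\omega_X=\dc\rho_j$ with $\rho_j$ smooth. On each ball $u+\rho_j$ is psh, and the sub-mean value inequality, taken at a point where $u$ is close to $0$ and chained across overlapping balls, bounds $\int u$ from below.

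\textbf{Step 2: exponential integrability.} Locally $u+\rho_j$ is psh and uniformly bounded above. Its Lelong numbers are bounded by a uniform constant $\nu$ depending on $A$ and $n$, because the mass of $A\omega_X+\dc u$ on $X$ equals $\int_X A\omega_X\wedge\omega_X^{n-1}$. I would then apply Skoda's local theorem: for $\alpha<2/\nu$ (suitably normalized), $e^{-\alpha(u+\rho_j)}$ is integrable on a smaller ball, with a bound controlled by $\alpha$, $\nu$ and the $L^1$ norm on the larger ball. By Step 1 that $L^1$ norm is uniformly bounded. Summing over the finitely many balls gives \eqref{eq: alpha}.

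\textbf{Main obstacle.} The delicate point is uniformity in Step 2. Skoda's estimate applied to one function gives only finiteness, while we need a single constant for the whole family $\mathcal{F}_0$. I would first try the compactness route: the map $u\mapsto\int e^{-\alpha u}$ is lower semicontinuous, which is the wrong direction for an upper bound, so I would instead invoke the uniform version of Skoda's theorem, in which the constant depends only on the $L^1$ bound and a Lelong number bound. If that is not available, the fallback is Tian's argument with Green's function, or simply citing \cite{guedj2017degenerate}, where this is proved for any Kähler class.
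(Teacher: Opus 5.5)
Your proposal is correct and follows the paper, which gives no argument beyond the observation $\PSH(X,\theta)\subset\PSH(X,A\omega_X)$ and a citation of the classical uniform Skoda--Tian theorem for the K\"ahler class $\{A\omega_X\}$. Your Steps 1--2 (compactness of $\mathcal{F}_0$ for the $L^1$ bound, then the uniform Skoda theorem with a Lelong bound coming from the fixed mass of $A\omega_X+\dc u$) are the standard proof in \cite{guedj2017degenerate}.

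One correction to how you state the uniform Skoda theorem. Its constant is controlled by compactness of the family together with a Lelong bound holding for every member of that compact family, which is true for $\mathcal{F}_0$. An $L^1$ bound plus pointwise Lelong bounds is not enough: $u_j=k\log(|z|^2+1/j)$ with $k\ge n/\alpha$ has zero Lelong numbers and bounded $L^1$ norms, yet $\int e^{-\alpha u_j}\to\infty$.
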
}

    \begin{definition}
        We say that
	the cohomology class $\{\theta\}$ is {\em big} if there exists $\rho\in\PSH(X,\theta)$ such that $\theta+\dc\rho\geq \delta{\omega_X}$ for some small constant $\delta>0$.

    The {\em ample locus} $\textrm{Amp}(\theta)$ is the Zariski open subset of points $x\in X$ such that there exists
a K\"ahler current $\theta+\dc\rho $, which is smooth in a neighborhood of $x$. 
    \end{definition}

     Assume $u,v\in \PSH(X,\theta)$. We say that $u$ is {\it less (resp. more) singular} than $v$, and denote by $u\succeq v$ (resp. $u \preceq v$), if there exists a constant $C$ such that $u+C\geq v$ (resp. $u\leq v+C$) on $X$. We say that $u$, $v$ have the {\em same singularity type}, and denote by $u\simeq v$ if $u \preceq v$ and $u\succeq v$.
A $\theta$-psh function $u$ is said to have {\em minimal singularities} if it is less singular than any other $\theta$-psh function. Following Demailly,
we introduce the extremal function $V_\theta$ defined by
\[ V_\theta(x):=\sup\{\varphi(x)\in\PSH(X,\theta): \varphi\leq 0 \}.\]
One can see that $V_\theta$ is a $\theta$-psh function with minimal singularities. 
In particular, $\theta$ is semi-positive if and only if $V_\theta=0$.

\medskip 
Recall that a  Borel set $E\subset X$ is {\em (locally) puripolar} if for each $x\in X$, there exists an open neighborhood $U$ of $x$ and a psh function $u$ on $U$ such that $E\cap U\subset \{u=-\infty\}$. 
	As follows from~\cite[Theorem 12.5]{guedj2017degenerate}, the set $E$ is globally pluripolar, i.e., there exists  $u\in \PSH(X,\omega_X)$ such that $E\subset\{u=-\infty \}$. If $\theta$ is big, then there exists a function $\rho\in \PSH(X,\theta)$ such that $\theta+\dc\rho\geq \varepsilon_0\omega_X$.
    The function $u':=\varepsilon_0 u+\rho$ is $\theta$-psh and its $-\infty$-locus contains $E$.
    
\subsection{Non-pluripolar measures}\label{sect: nonpluripolar}
Let $\theta_1,\ldots,\theta_p$ be smooth closed real (1,1)-forms for $1\leq p\le n$. 
Let $u_j$ be $\theta_j$-psh functions for $j\in\{1,\ldots,p\}$ and put $\theta_{j,u_j}:=\theta_j +\dc u_j$. We recall how to define the non-pluripolar product $\theta_{1,u_1} \wedge \cdots \wedge \theta_{p,u_p}$. We write locally $\theta_{j,u_j}= \dc v_j$, where $v_j$ is psh. By \cite{bedford1987fine, boucksom2010monge}, one knows that the sequence of positive currents 
$$\mathbf{1}_{\cap_{j=1}^p \{v_j>-k\}}\dc \max\{v_1, -k\} \wedge \cdots \wedge \dc \max\{v_p, -k\}$$ 
is increasing in $k \in \mathbb{N}$ and converges to a closed positive current, which is independent of the choice of local potentials $v_j$'s. Thus, we obtain a well-defined closed positive current on $X$ which is called the \emph{non-pluripolar product} $\theta_{1,u_1} \wedge \cdots \wedge \theta_{p,u_p}$ of $\theta_{1,u_1}, \ldots, \theta_{p,u_p}$.  
For any $u\in\PSH(X,\theta)$, the {\em non-pluripolar complex Monge--Amp\`ere measure} of $u$ is given by \[\theta_u^n:= (\theta+\dc u)^n.\] We observe that $\theta_u^n$ puts no mass on pluripolar sets.
	The volume of a big class $\{\theta\}$ is defined by
 \[\Vol(\{\theta\}):=\int_X\theta_{V_\theta}^n. \]
A $\theta$-psh function $u$ is said to have {\em full Monge--Amp\`ere mass} if $\int_X\theta_u^n=\Vol(\theta)$.
We note that by~\cite[Theorem 4.7]{boucksom2002volume}, the class $\{\theta\}$ is big if and only if $\Vol(\theta)>0$.
When $\{\theta\}$ is big, we define the non-pluripolar Monge--Amp\`ere measure for $\varphi\in\PSH(X,\theta)$:
\[ \MA_\theta(\varphi):=\frac{1}{\Vol(\theta)}(\theta+\dc\varphi)^n,\]
which is a probability measure on $X$.

Given a potential $\phi\in\PSH(X,\theta)$, we let $\PSH(X,\theta,\phi)$ denote the set of $\theta$-psh functions $u$ such that $u\leq\phi$. We also define by $\mathcal{E}(X,\theta,\phi)$ the set of $u\in\PSH(X,\theta,\phi)$ of full Monge--Amp\`ere mass with respect to $\phi$, i.e.,
	$\int_X\theta_u^n=\int_X\theta_\phi^n$.
	When $\phi=V_\theta$ we simply denote by $\mathcal{E}(X,\theta)$.

\medskip 

We recall here the plurifine locality of the non-pluripolar Monge--Amp\`ere product (cf. \cite[Corollary 4.3]{bedford1987fine} or \cite[Section 1.2]{boucksom2010monge}) for later use. This topology is the coarsest such that all quasi-psh functions with values in $\mathbb{R}$ are continuous.
\begin{lemma}[{\cite[Corollay 4.3]{bedford1987fine}}]\label{lem: plurifine}
	Assume that $\varphi$, $\psi$ are $\theta$-psh functions such that $\varphi=\psi$ on an open set $U$ in the plurifine topology. Then 
	\begin{equation*}
	\mathbf{1}_U\theta_\varphi^n=\mathbf{1}_U\theta_\psi^n.
	\end{equation*}
\end{lemma}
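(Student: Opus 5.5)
We need to prove the plurifine locality statement (Lemma~\ref{lem: plurifine}): if $\varphi=\psi$ on a plurifine open set $U$, then $\mathbf{1}_U\theta_\varphi^n=\mathbf{1}_U\theta_\psi^n$. The plan is to reduce to the bounded local case of Bedford--Taylor and then pass to the limit using the definition of the non-pluripolar product recalled above.

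\emph{Localization.} Both measures are defined locally, so it suffices to work in a small coordinate ball $B$ on which $\theta=\dc g$ with $g$ smooth. There we write $\theta_\varphi=\dc v_1$ and $\theta_\psi=\dc v_2$ with $v_1=g+\varphi$ and $v_2=g+\psi$ psh. Note that $v_1=v_2$ on $U\cap B$. For $k\in\mathbb{N}$, set $v_i^k:=\max\{v_i,-k\}$; these are bounded psh functions, and $v_1^k=v_2^k$ on $U\cap B$.

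\emph{Bounded case.} For bounded psh functions equal on a plurifine open set $U$, the Bedford--Taylor measures agree on $U$:
\[
\mathbf{1}_U(\dc v_1^k)^n=\mathbf{1}_U(\dc v_2^k)^n .
\]
This is the genuinely nontrivial input, and I expect it to be the main obstacle. I would prove it as follows. Plurifine open sets are generated by sets of the form $O\cap\{u>0\}$, with $O$ Euclidean open and $u$ psh. So it suffices to treat $U=O\cap\{u>0\}$ and work locally. On $\{u>0\}$ one has
\[
\max\{v_1^k,\ v_2^k+\varepsilon u\}\;\to\;v_1^k
\]
in a controlled way as $\varepsilon\to0$. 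The maximum principle for bounded functions, namely $\mathbf{1}_{\{a>b\}}(\dc\max\{a,b\})^n=\mathbf{1}_{\{a>b\}}(\dc a)^n$ (itself proved via quasicontinuity and monotone convergence), then gives the identity on $\{u>0\}\cap O$. Alternatively, one can cite Bedford--Taylor's fine-topology paper directly for this bounded statement, since the lemma is attributed there.

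\emph{Passage to the limit.} The sets $\{v_1>-k\}$ and $\{v_2>-k\}$ are plurifine open. On the plurifine open set $U_k:=U\cap\{v_1>-k\}$ we have $v_1^k=v_1=v_2=v_2^k$, and $U_k\subset\{v_2>-k\}$ as well. By the bounded case applied on $U_k$,
\[
\mathbf{1}_{U}\mathbf{1}_{\{v_1>-k\}}(\dc v_1^k)^n=\mathbf{1}_{U_k}(\dc v_1^k)^n=\mathbf{1}_{U_k}(\dc v_2^k)^n=\mathbf{1}_{U}\mathbf{1}_{\{v_2>-k\}}(\dc v_2^k)^n ,
\]
where the last equality uses $U\cap\{v_2>-k\}=U_k$ because $v_1=v_2$ on $U$.

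By the definition recalled above, $\mathbf{1}_{\{v_i>-k\}}(\dc v_i^k)^n$ increases to $\theta_{\varphi}^n$ (respectively $\theta_\psi^n$) on $B$. Multiplying by $\mathbf{1}_U$ preserves monotone convergence of measures, so letting $k\to\infty$ gives $\mathbf{1}_U\theta_\varphi^n=\mathbf{1}_U\theta_\psi^n$ on $B$. Covering $X$ by finitely many such balls concludes the proof.
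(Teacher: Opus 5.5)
Your proof is correct and follows essentially the paper's route. The paper only cites Bedford--Taylor's bounded result \cite{bedford1987fine} together with \cite[Section 1.2]{boucksom2010monge}, and your reduction to the bounded case via the plurifine open sets $U\cap\{v_1>-k\}=U\cap\{v_2>-k\}$, followed by monotone convergence in the definition of the non-pluripolar product, is exactly how that extension is carried out.

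Do rely on the citation for the bounded case, though, because your self-contained sketch is incomplete as written. Off $U$, $\max\{v_1^k,v_2^k+\varepsilon u\}$ tends to $\max\{v_1^k,v_2^k\}$, not to $v_1^k$. Passing the restricted measures $\mathbf{1}_U(\cdot)$ to the limit on a set that is not Euclidean open needs a quasicontinuous weight such as $\max(u,0)$. You would also need to compare both $v_1^k$ and $v_2^k$ symmetrically with $\max\{v_1^k,v_2^k\}$, and use quasi-Lindel\"of to pass from basic sets to general plurifine open sets.
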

For practice, we stress that sets of the form
$\{u < v\}$, where $u$ and $v$ are quasi-psh functions, are open in the plurifine topology. Lemma~\ref{lem: plurifine} will be referred to as the {\em plurifine locality property}.

 We recall the following classical inequality.
\begin{lemma}\label{lem: maxprin}
	Let $\varphi,\psi\in\PSH(X,\theta)$. Then
	\[\theta_{\max(\varphi,\psi)}^n\geq \mathbf{1}_{\{\psi\leq\varphi \}}\theta_\varphi^n+\mathbf{1}_{\{\varphi<\psi\}}\theta_{\psi}^n. \]
    In particular, if $\varphi\leq\psi$ then $\mathbf{1}_{\{\varphi=\psi\}}\theta^n_\varphi\leq \mathbf{1}_{\{\varphi=\psi\}}\theta^n_\psi$.
\end{lemma}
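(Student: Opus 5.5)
The plan is to reduce Lemma~\ref{lem: maxprin} to the plurifine locality property (Lemma~\ref{lem: plurifine}) on the plurifine open sets $\{\varphi<\psi\}$ and $\{\psi<\varphi\}$. The contact set $\{\varphi=\psi\}$ is then handled by a perturbation. Set $u:=\max(\varphi,\psi)\in\PSH(X,\theta)$.

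\textbf{Off the contact set.} On the plurifine open set $\{\varphi<\psi\}$ we have $u=\psi$, so Lemma~\ref{lem: plurifine} gives $\mathbf{1}_{\{\varphi<\psi\}}\theta_u^n=\mathbf{1}_{\{\varphi<\psi\}}\theta_\psi^n$. Symmetrically, $\mathbf{1}_{\{\psi<\varphi\}}\theta_u^n=\mathbf{1}_{\{\psi<\varphi\}}\theta_\varphi^n$. It therefore remains to show
\[
\mathbf{1}_{\{\varphi=\psi\}}\theta_u^n\geq \mathbf{1}_{\{\varphi=\psi\}}\theta_\varphi^n .
\]

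\textbf{The contact set.} For $\varepsilon>0$ set $u_\varepsilon:=\max(\varphi,\psi-\varepsilon)$. On the plurifine open set $\{\psi-\varepsilon<\varphi\}$, which contains $\{\psi\leq\varphi\}$, we have $u_\varepsilon=\varphi$. Hence for every Borel set $B\subset\{\psi\le\varphi\}$ we get $\theta_{u_\varepsilon}^n(B)=\theta_\varphi^n(B)$, i.e.
\[
\mathbf{1}_{\{\psi\le\varphi\}}\theta_{u_\varepsilon}^n=\mathbf{1}_{\{\psi\le\varphi\}}\theta_\varphi^n .
\]
As $\varepsilon\downarrow0$, the functions $u_\varepsilon$ increase to $u$ everywhere. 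I will then use the standard weak semicontinuity of non-pluripolar products along monotone sequences: if $u_\varepsilon\nearrow u$, then for every positive continuous $\chi$ and every quasi-psh $h$ with values in $[0,1]$, one has $\liminf_{\varepsilon\to0}\int h\chi\,\theta_{u_\varepsilon}^n\geq\int h\chi\,\theta_u^n$ \emph{in the reverse direction} (this is not automatic). I therefore plan to use the form from \cite{boucksom2010monge} or Darvas--Di Nezza--Lu: if $u_j\to u$ in capacity, then $\liminf_j \int \chi\,\mathbf{1}_U\theta_{u_j}^n\geq \int\chi\,\mathbf{1}_U\theta_u^n$ for plurifine open $U$, and equality of limits holds when total masses converge.

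\textbf{A cleaner route to avoid this issue.} I would first prove the inequality locally, where by definition of the non-pluripolar product it suffices to work on $\{\varphi>-k,\psi>-k\}$ with $\max(\varphi,-k)$ and $\max(\psi,-k)$ bounded. There this is the classical Bedford--Taylor inequality, which is proved exactly as above: for bounded functions the sequence $u_\varepsilon\nearrow u$ gives $\theta_{u_\varepsilon}^n\to\theta_u^n$ weakly. The sets $\{\varphi\ge\psi+\varepsilon'\}$-type limits are then handled by writing $\{\psi\le\varphi\}=\bigcap_\delta\{\psi<\varphi+\delta\}$ and using Demailly's inequality for bounded psh functions. Finally I let $k\to\infty$ using the increasing definition of the non-pluripolar product and the plurifine locality to identify the truncations. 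The "in particular" statement follows by restricting to $\{\varphi=\psi\}$ when $\max(\varphi,\psi)=\psi$.

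The main obstacle is the weak-convergence step on the closed set $\{\varphi=\psi\}$, which needs lower semicontinuity on plurifine open sets. My choice is to do it in the bounded local setting via Bedford--Taylor and then pass to the non-pluripolar limit.
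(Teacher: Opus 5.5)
Your final local route is correct. The paper gives no argument for this lemma and only cites \cite[Lemma 2.9]{darvas2025relative}, so what you propose is a self-contained proof of a quoted fact. The steps are:
\begin{enumerate}
\item For locally bounded psh $a,b$, plurifine locality on $\{b-\varepsilon<a\}\supset\{b\le a\}$ gives $(\dc\max(a,b-\varepsilon))^n\ge\mathbf{1}_{\{b\le a\}}(\dc a)^n$. Bedford--Taylor continuity along $\max(a,b-\varepsilon)\nearrow\max(a,b)$ lets this inequality, taken against a fixed measure, pass to the limit.
\item Apply this to $\max(\varphi+g,-k)$ and $\max(\psi+g,-k)$, where $\theta=\dc g$ locally. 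You must truncate the local potentials, not $\varphi,\psi$ themselves.
\item Identify all three measures with the non-pluripolar ones on the plurifine open set $\{\varphi+g>-k\}\cap\{\psi+g>-k\}$.
\item Let $k\to\infty$. The complement of the union is pluripolar, hence null for every non-pluripolar measure.
\end{enumerate}
This is the standard argument and needs only Bedford--Taylor theory.

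Your middle paragraph is muddled, though. The lower semicontinuity $\liminf_\varepsilon\int\chi\,\theta^n_{u_\varepsilon}\ge\int\chi\,\theta^n_u$ points the wrong way and proves nothing here. The global $\varepsilon$-route does work once you note two things. First, $u-\varepsilon\le u_\varepsilon\le u$, so $u_\varepsilon$ and $u$ have the same singularity type and, by monotonicity of non-pluripolar masses, the same total mass. Second, $u_\varepsilon\nearrow u$ converges in capacity. Convergence in capacity with equal masses gives weak convergence $\theta^n_{u_\varepsilon}\to\theta^n_u$. The fixed-measure inequality $\theta^n_{u_\varepsilon}\ge\mathbf{1}_{\{\psi\le\varphi\}}\theta^n_\varphi$ then passes to the limit directly. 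Points with $\varphi=\psi=-\infty$ lie in $\{\psi\le\varphi\}$ but not in $\{\psi-\varepsilon<\varphi\}$; they form a pluripolar set and are irrelevant.

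That global route is shorter but rests on the heavier monotonicity-of-mass and convergence-in-capacity theorems. Your local route avoids both.
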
\begin{proof}
    See e.g.,~{\cite[Lemma 2.9]{darvas2025relative}}.
\end{proof}

    \subsection{Quasi-envelopes}
Given a measurable function $f:X\to\mathbb{R}$, we define the {\em $\theta$-psh envelope} of $f$ by
\begin{equation*}
P_\theta(f):=(\sup\{u\in\PSH(X,\theta): u\leq f\;\text{on}\, X \})^*,
\end{equation*} where the star means that we take the upper semi-continuous regularization. We use the convention that $\sup \varnothing=-\infty$.

We observe that $P_\theta(f)\in\PSH(X,\theta)$ if and only if there exists a function $u\in\PSH(X,\theta)$ lying below $f$. We also mention that $P_\theta(f+C)=P_\theta(f)+C$ for any constant $C$.

By~\cite[Chapter 9]{guedj2017degenerate}, if $f$ is finite on a non-pluripolar set, then $P_\theta(f)$ is a well-defined $\theta$-psh function on $X$. 

If $f=-\mathbf{1}_E$ is the negative characteristic function of a subset $E$, then $P_\theta(f)=h^*_{E,\theta}$ is the so-called relative extremal function of $E$, cf.~\cite[Chapter 9]{guedj2017degenerate}. 
When $f=0$ then $P_\theta(0)=V_\theta$, which is a $\theta$-psh function with minimal singularities.


Given a $\theta$-psh function $\phi$, 
Ross and Witt Nystr\"om~\cite{ross2014analytic} introduced the ``rooftop envelope" as follows
\[P_\theta[\phi](f)= \left( \lim_{C\to +\infty}P_\theta(\min(\phi+C,f))\right)^*. \]
When $f=0$ we simply write $P_\theta[\phi]$. 
\begin{definition}
A function $\phi\in\PSH(X,\theta)$ is called a {\em model potential} if $\int_X\theta_\phi^n>0$ and $\phi=P_\theta[\phi]$.
\end{definition}

{Recall that the Monge-Amp\`ere capacity of a Borel set $E\subset X$ is defined as
\[\Capa_\theta(E)=\sup\left\{\int_E \theta_u^n: u\in \PSH(X, \theta), V_\theta-1\leq u\leq V_\theta\right\}.\]
A function is called \emph{quasi-continuous} if for each $\varepsilon>0$, there exists an
open set $U$ such that $\Capa_\theta(U)<\varepsilon$ and the restriction of $u$ on 
$X\setminus U$ is continuous. 
} 

\begin{lemma}\label{lem: quasi-everywhere} Assume that $f$ is bounded from below.
Then
 \[P_\theta(f)=\sup\{u\in\PSH(X,\theta): u\leq f\;\text{ quasi-everywhere in}\, X \}. \]
 \end{lemma}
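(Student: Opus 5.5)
Let $\tilde P:=\sup\{u\in\PSH(X,\theta): u\leq f \text{ quasi-everywhere}\}$. We prove $P_\theta(f)=\tilde P$ by two inequalities. Since every $u\le f$ on $X$ also satisfies $u\le f$ quasi-everywhere, we get $\sup\{u: u\le f\}\le \tilde P$. The content is the reverse inequality, plus showing that the usc regularization in the definition of $P_\theta(f)$ is harmless on the right-hand side.

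\textbf{Reduction to $\theta$-psh competitors with pluripolar exceptional sets.} Fix $u\in\PSH(X,\theta)$ with $u\le f$ outside a set $E$ of zero capacity. Sets of zero $\Capa_\theta$ are pluripolar; this is standard, via comparison of $\Capa_\theta$ with the $\omega_X$-capacity for big $\theta$. Hence, by the discussion in Section~2.1, there is $\psi\in\PSH(X,\theta)$ with $E\subset\{\psi=-\infty\}$. Replacing $\psi$ by $\psi-\sup_X\psi$, we may assume $\psi\le0$.

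\textbf{The $\varepsilon$-perturbation.} For $\varepsilon\in(0,1)$ set
\[
u_\varepsilon:=(1-\varepsilon)u+\varepsilon\psi-\varepsilon C,
\]
which is a convex combination of $\theta$-psh functions minus a constant, hence $\theta$-psh. On $E$ we have $u_\varepsilon=-\infty\le f$. Off $E$ we have $u\le f$.

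This is the main obstacle: we need $u_\varepsilon\le f$ off $E$, but $(1-\varepsilon)u$ need not lie below $f$. This is exactly where the hypothesis that $f$ is bounded below, say $f\ge -M$, enters. Since $u\le f$ off $E$ and $u\le\sup_X u$, off $E$ we get
\[
(1-\varepsilon)u\le f+\varepsilon(-u)\quad\text{and}\quad u\ge ?
\]
This gives no control, because $-u$ can be large.

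So we argue differently. Write $(1-\varepsilon)u=u-\varepsilon u$ and use $\varepsilon\psi$ only where needed. Alternatively, take $u_\varepsilon:=\max(u+\varepsilon\psi,\,-M')$ with $-M'\le \min(f, V_\theta - \text{const})$? The function $\max(u+\varepsilon\psi,V_\theta-M'')$ is $\theta$-psh. Choose $M''$ with $V_\theta-M''\le -M\le f$, which is possible since $V_\theta\le0$, and take $M''=M$. Then:
\begin{itemize}
\item on $E$, $u+\varepsilon\psi=-\infty$, so the max equals $V_\theta-M\le f$;
\item off $E$, $u+\varepsilon\psi\le u\le f$ and $V_\theta-M\le f$.
\end{itemize}
Hence $w_\varepsilon:=\max(u+\varepsilon\psi,V_\theta-M)\le f$ everywhere, so $w_\varepsilon\le P_\theta(f)$.

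\textbf{Letting $\varepsilon\to0$.} Outside the pluripolar set $\{\psi=-\infty\}$, $w_\varepsilon\to\max(u,V_\theta-M)\ge u$. Thus $u\le P_\theta(f)$ off a pluripolar set, and hence everywhere, since two $\theta$-psh functions ordered almost everywhere are ordered everywhere (both being the limsup of their averages). Taking the supremum over $u$ gives $\tilde P\le P_\theta(f)$.

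\textbf{$P_\theta(f)$ is itself a competitor.} The upper envelope $\sup\{u\le f\}$ coincides with its usc regularization off a pluripolar set (Bedford--Taylor negligible sets). Therefore $P_\theta(f)\le f$ quasi-everywhere, so $P_\theta(f)\le\tilde P$. This gives equality, and shows that $\tilde P$ is $\theta$-psh.
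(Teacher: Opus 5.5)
There is a genuine gap at the central step. The function $u+\varepsilon\psi$ is not $\theta$-psh. Indeed
\[
\theta+dd^c(u+\varepsilon\psi)=\theta_u+\varepsilon\theta_\psi-\varepsilon\theta,
\]
which need not be a positive current. In general $u+\varepsilon\psi$ is only $(1+\varepsilon)\theta$-psh, and $\theta$ is not even semipositive here. Consequently $w_\varepsilon=\max(u+\varepsilon\psi,V_\theta-M)$ is not an admissible competitor, and the implication ``$w_\varepsilon\le f$, hence $w_\varepsilon\le P_\theta(f)$'' is unjustified. Passing to $P_{(1+\varepsilon)\theta}$ does not rescue it as written either. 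The function $V_\theta-M$ need not be $(1+\varepsilon)\theta$-psh, since $(1+\varepsilon)\theta+dd^cV_\theta=\theta_{V_\theta}+\varepsilon\theta$. So $w_\varepsilon$ lies in neither class.

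The convex combination you tried first is the right object; what is missing is to normalize $u$ beforehand. Set $\tilde u:=\max(u,V_\theta-M)$. It is $\theta$-psh, satisfies $\tilde u\ge u$, and satisfies $\tilde u\le f$ off $E$, because $V_\theta-M\le -M\le f$. Since $\psi\le 0$ forces $\psi\le V_\theta\le \tilde u+M$, you get, off $E$,
\[
(1-\varepsilon)\tilde u+\varepsilon\psi-\varepsilon M\le (1-\varepsilon)\tilde u+\varepsilon\tilde u=\tilde u\le f .
\]
On $E$ this function equals $-\infty$. Being a convex combination of $\theta$-psh functions minus a constant, it is $\theta$-psh, hence lies below $P_\theta(f)$. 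Letting $\varepsilon\to0$ off $\{\psi=-\infty\}$ gives $u\le\tilde u\le P_\theta(f)$ outside a pluripolar set, hence everywhere.

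This is precisely where the lower bound on $f$ enters: it lets you assume the competitor has minimal singularities, so that $\psi-\tilde u$ is bounded above. That is exactly the control you noted was missing for $(1-\varepsilon)u$. The rest of your argument is fine: the pluripolarity of the exceptional set, the passage from almost everywhere to everywhere, and the use of Bedford--Taylor negligible sets to show that $P_\theta(f)$ is itself a competitor.
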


 \begin{proof} See \cite[Lemma 2.3]{DDP25-singularities}.
 \end{proof}

\begin{theorem} \label{thm: envelope}
	Assume that $f$ is quasi-continuous, bounded from below, and $P_\theta(f)\in\PSH(X,\theta)$. Then $\theta_{P_\theta(f)}^n$ is concentrated
	on the contact set $\{P_\theta(f)=f \}$.
\end{theorem}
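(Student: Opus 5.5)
The plan is the classical balayage argument, run first for continuous obstacles and then extended to quasi-continuous $f$ by approximation. Write $u:=P_\theta(f)$.

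\emph{Step 1: $f$ continuous.} We show that $\theta_u^n$ puts no mass on the open set $\{u<f\}$. First, $u$ has minimal singularities, since $V_\theta+\inf_X f\le u$. Fix $x_0\in\{u<f\}$ in the ample locus $\mathrm{Amp}(\theta)$; the complement of the ample locus is pluripolar and is not charged by the non-pluripolar product, so this restriction is harmless. Near $x_0$, $V_\theta$ is locally bounded, hence so is $u$. By upper semicontinuity of $u$ and continuity of $f$ there are a small ball $B\ni x_0$ and $\varepsilon>0$ with $u\le f-\varepsilon$ on $\bar B$ and $\mathrm{osc}_{\bar B} f<\varepsilon/2$. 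Let $\tilde u$ be the Bedford--Taylor solution of the homogeneous Dirichlet problem on $B$ with boundary data $u$: in a local potential $\theta=\dc g$, solve $(\dc(g+\tilde u))^n=0$ in $B$ with $g+\tilde u=g+u$ on $\partial B$, taking the Perron envelope, and set $\tilde u=u$ outside $B$. Then $\tilde u\in\PSH(X,\theta)$ and $\tilde u\ge u$. By the maximum principle, $\tilde u\le \sup_{\partial B}u+O(\mathrm{diam} B)\le f-\varepsilon/2$ on $B$ once $B$ is small, using the continuity of $g$. Hence $\tilde u\le f$ on $X$, so $\tilde u\le u$ and therefore $\tilde u=u$. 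This gives $\theta_u^n=0$ on $B$. Since $\{u<f\}$ is covered by such balls up to a pluripolar set, the measure $\theta_u^n$ is supported on $\{u=f\}$.

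\emph{Step 2: approximation.} For quasi-continuous $f$, bounded below, I would use the standard approximation from \cite{guedj2021quasi1} or \cite{DDP25-singularities}. Choose continuous $f_j$ decreasing to $f$ quasi-everywhere. This is possible because a quasi-continuous function bounded below is a decreasing limit of continuous functions off an open set of small capacity; truncate $f$ from above by $\min(f,k)$ and note this does not change $P_\theta(f)$ locally. Then $u_j:=P_\theta(f_j)$ decreases, and its limit is a $\theta$-psh function $\le f$ quasi-everywhere. By Lemma~\ref{lem: quasi-everywhere} the limit is $\le u$, and since $u_j\ge u$ it equals $u$. All the $u_j,u$ have minimal singularities, so by monotone continuity of the non-pluripolar Monge--Amp\`ere operator along decreasing sequences with minimal singularities \cite{boucksom2010monge}, $\theta_{u_j}^n\to\theta_u^n$ weakly.

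\emph{Step 3: passing to the limit.} From Step 1, $\int (f_j-u_j)\,\theta_{u_j}^n=0$. Fix $k$. For $j\ge k$ we have $f_k-u_j\ge f_j-u_j\ge0$, so
\[\int \min(f_k-u_j,1)\,\theta_{u_j}^n\le \int \min(f_k-u_k,1)\,\theta_{u_j}^n.\]
I would rather argue directly: $\int \min(f-u,1)\,\theta_u^n\le \liminf_j \int\min(f_k-u_j,1)\,\theta_{u_j}^n$. This uses convergence of $\int h\,\theta_{u_j}^n$ for quasi-continuous bounded $h$, where the integrand $\min(f_k-u_j,1)$ increases in $j$ toward $\min(f_k-u,1)$, and lower semicontinuity holds via the capacity convergence results in \cite{guedj2017degenerate}. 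Then let $k\to\infty$. The resulting right-hand side is controlled by $\int \min(f_j-u_j,1)\,\theta_{u_j}^n+o(1)=o(1)$, uniformly thanks to the quasi-uniform convergence $f_k\to f$ off small-capacity sets together with $\theta_{u_j}^n\le \Vol(\theta)\Capa_\theta$. Hence $\theta_u^n(\{u<f\})=0$.

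\textbf{Main obstacle.} I expect Step 3 to be the hardest: justifying the weak convergence of $\theta_{u_j}^n$ tested against the merely quasi-continuous functions $f_k-u_j$, while controlling exceptional sets of small capacity uniformly in $j$. The capacity domination $\theta_{u_j}^n\le C\,\Capa_\theta$, valid because the $u_j-V_\theta$ are uniformly bounded, is what should make this work.
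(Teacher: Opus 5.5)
\textbf{There is a genuine gap in Step 2.} Step 1 is the standard balayage and is fine. Step 3 would also go through if Step 2 held. Note, though, that your first displayed inequality there is reversed: for $j\ge k$ you have $u_j\le u_k$, hence $\min(f_k-u_j,1)\ge\min(f_k-u_k,1)$.

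The problem is the claim that you can choose continuous $f_j$ decreasing to $f$ quasi-everywhere. A quasi-continuous $f$, even a bounded one, is in general \emph{not} such a limit. Such a limit is upper semicontinuous, so $f$ would coincide quasi-everywhere with an usc function $a$. Then $a(x)$ would dominate the off-pluripolar $\limsup$ of $f$ at every point.

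A counterexample: take $f=\sin g$, where $g$ is a quasi-psh function depending on one variable whose poles are dense (locally $g=\sum_j 2^{-j}\log|z_1-a_j|$ with $\{a_j\}$ dense). Then $f$ is bounded and quasi-continuous. But $g$ is finely continuous and tends to $-\infty$ at each pole, so both $\{f>1-\delta\}$ and $\{f<-1+\delta\}$ are non-pluripolar in every open set. This would force $a\ge 1$ everywhere, hence $f=1$ quasi-everywhere, which is absurd.

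Quasi-continuity only gives, for each $\varepsilon$, an open set $U_\varepsilon$ with $\Capa_\theta(U_\varepsilon)<\varepsilon$ off which $f$ is continuous. These sets change with $\varepsilon$ and do not yield one sequence that is monotone on all of $X$. Without monotonicity you lose both the identification $P_\theta(f_j)\searrow P_\theta(f)$ and the monotone continuity of the Monge--Amp\`ere operator, so the argument does not close.

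\textbf{A standard repair} is to approximate from above by \emph{lower semicontinuous} obstacles; this is also a natural place for Lemma~\ref{lem: quasi-everywhere}, which the paper invokes.
\begin{enumerate}
\item After truncating $f$ from above, take decreasing open sets $U_j$ with $\Capa_\theta(U_j)\to0$ and $f|_{X\setminus U_j}$ continuous. Set $f_j=f$ on $X\setminus U_j$ and $f_j=\sup_X f$ on $U_j$. Each $f_j$ is lsc, and $f_j\searrow f$ off the pluripolar set $\bigcap_j U_j$.
\item Your balayage works for lsc obstacles. At $x_0\in\{P_\theta(f_j)<f_j\}\cap\mathrm{Amp}(\theta)$, replace the oscillation bound on $f$ by upper semicontinuity of $P_\theta(f_j)$ and lower semicontinuity of $f_j$ at $x_0$.
\item Since $P_\theta(f_j)\le f_j$ holds only quasi-everywhere, compare the lifted function with $P_\theta(f_j)$ through Lemma~\ref{lem: quasi-everywhere}. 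The same lemma gives $u_j:=P_\theta(f_j)\searrow u$.
\item Now $\theta_{u_j}^n$ puts no mass on $\{u_j<f\}\setminus U_j$. Moreover $\theta_{u_j}^n(U_j)\le C\,\Capa_\theta(U_j)$, thanks to the uniform bound $|u_j-V_\theta|\le C$.
\item For $l\le j$, test against the bounded quasi-continuous functions $\min((f-u_l)^+,1)\le\min((f-u_j)^+,1)$. Let $j\to\infty$, using quasi-continuity together with the capacity domination of $\theta_{u_j}^n$, and then let $l\to\infty$ by monotone convergence. This gives $\theta_u^n(\{u<f\})=0$.
\end{enumerate}
Alternatively you can keep continuous $f_j$ equal to $f$ off $U_j$ (by Tietze). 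But then you must prove by hand that $P_\theta(f_j)\to P_\theta(f)$ in capacity, which your proposal does not do.
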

\begin{proof}
 The proof proceeds along the same lines as in \cite[Theorem 2.2]{darvas2025relative}, where Lemma~\ref{lem: quasi-everywhere} is used. For this reason, we omit the details.
\end{proof}
\begin{lemma}[{\cite[Lemma 5.11]{darvas2025relative}}]\label{lemma: ddl511}
     Let $u\in\mathcal{E}(X,\theta,\phi)$ be such that $u\leq \phi$. Let $\gamma:\mathbb{R}^+\cup\{+\infty\}\rightarrow\mathbb{R}^+\cup\{+\infty\}$ be an increasing concave continuous function with $\gamma'\leq 1$. Then $v:=-\gamma(\phi-u)+\phi\in\PSH(X,\theta,\phi)$ and 
     \[ \theta_v^n\geq (\gamma'(\phi-u))^n\theta_u^n.\]
\end{lemma}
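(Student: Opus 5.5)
The plan is to first show that $v$ is a $\theta$-psh function lying below $\phi$. Then prove the measure inequality under extra regularity (relatively bounded $u$, smooth $\gamma$, locally bounded potentials). Finally remove these assumptions by plurifine locality, using the fact that non-pluripolar measures charge no pluripolar sets.

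\emph{Step 1: $v\in\PSH(X,\theta,\phi)$.} Since $\gamma$ is concave, increasing, continuous and satisfies $0\le\gamma'\le 1$, it is the infimum of the affine functions $t\mapsto at+b$ lying above it, with $a\in[0,1]$. Hence
\[ v=\phi-\gamma(\phi-u)=\sup_{(a,b)}\bigl((1-a)\phi+au-b\bigr). \]
Each function in this supremum is a convex combination of $\theta$-psh functions minus a constant, hence $\theta$-psh. The supremum is bounded above by $\phi$ because $\gamma\ge 0$. The function $v$ is upper semicontinuous, since $\phi-u$ is lower semicontinuous where finite and $\gamma$ is continuous and increasing (with the conventions at $+\infty$). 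So $v$ equals its usc regularization, and $v\in\PSH(X,\theta,\phi)$.

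\emph{Step 2: formal computation in the regular case.} Assume $\gamma$ is smooth and $\phi,u$ are locally bounded near a point, say smooth after local approximation. Put $t=\phi-u\ge0$. Then
\[ \theta+\dc v=\theta_\phi-\gamma'(t)(\theta_\phi-\theta_u)-\gamma''(t)\,dt\wedge d^ct\ \ge\ (1-\gamma'(t))\theta_\phi+\gamma'(t)\theta_u\ \ge\ \gamma'(t)\theta_u, \]
using $\gamma''\le0$ and $0\le\gamma'\le1$. Taking the $n$-th wedge power of positive forms gives $\theta_v^n\ge(\gamma'(t))^n\theta_u^n$. For merely locally bounded potentials, I take decreasing smooth local approximants $\phi_j\searrow\phi$, $u_j\searrow u$ and apply Bedford--Taylor continuity. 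The expanded mixed product $\bigl((1-\gamma')\theta_{\phi_j}+\gamma'\theta_{u_j}\bigr)^n$ dominates $(\gamma'(\phi_j-u_j))^n\theta_{u_j}^n$ termwise.

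\emph{Step 3: removing the assumptions.} Set $u_k:=\max(u,\phi-k)$ and $\phi_j:=\max(\phi,V_\theta-j)$. The latter has minimal singularities and is therefore locally bounded on $\mathrm{Amp}(\theta)$. Consider the plurifine open set $O_{j,k}:=\{u>\phi-k\}\cap\{\phi>V_\theta-j\}\cap\mathrm{Amp}(\theta)$. On it, $u=u_k$, $\phi=\phi_j$ and $v=\phi_j-\gamma(\phi_j-u_k)$. By Lemma~\ref{lem: plurifine}, both sides of the desired inequality coincide on $O_{j,k}$ with the corresponding measures built from the locally bounded functions $u_k,\phi_j$, where Step 2 applies. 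To pass from smooth $\gamma$ to general $\gamma$, I approximate $\gamma$ by smooth concave increasing $\gamma_\varepsilon$ with $\gamma_\varepsilon'\le1$ and $\gamma_\varepsilon'\to\gamma'$ (take one-sided derivatives at kinks). The associated $v_\varepsilon$ converge monotonically to $v$, so $\theta_{v_\varepsilon}^n\to\theta_v^n$ on the bounded pieces. Letting $j,k\to\infty$, the sets $O_{j,k}$ exhaust $X$ up to the pluripolar set $\{u=-\infty\}\cup\{\phi=-\infty\}\cup(X\setminus\mathrm{Amp}(\theta))$, which $\theta_u^n$ does not charge. Monotone convergence then gives $\theta_v^n\ge(\gamma'(\phi-u))^n\theta_u^n$ on all of $X$.

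\emph{Main obstacle.} The delicate step is the limit in Step 2/3 when $\gamma'(\phi-u)$ is only quasi-continuous (and possibly discontinuous if $\gamma$ has kinks). Weak convergence of measures does not directly preserve an inequality with a discontinuous density. To handle this, I would test against nonnegative continuous $\chi$ and use quasi-continuity of $\phi-u$ to restrict to compact sets of small complementary capacity where $\gamma'(\phi-u)$ is continuous. Approximating $\gamma$ from the correct side keeps the density lower semicontinuous, so the liminf inequality survives the Bedford--Taylor convergence.
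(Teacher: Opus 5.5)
The paper gives no proof of this lemma; it quotes it from \cite{darvas2025relative}. Your architecture is the standard route and can be completed: prove the inequality for potentials locally bounded on $\mathrm{Amp}(\theta)$ by local regularization, then transfer it to the general case by plurifine locality on the sets $O_{j,k}$, using that $\theta_u^n$ charges no pluripolar set. However, two steps are wrong as written.

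\emph{Step 1.} The claim that $\phi-u$ is lower semicontinuous where finite is false. $\phi$ is usc and $-u$ is lsc, so $\phi-u$ has no semicontinuity in general. For a local example, take $u\equiv-10$ and $\phi=\max(h,-10)$ with $h$ psh, $h(0)=0$, and poles of $h$ accumulating at $0$; then $\phi-u$ is $10$ at $0$ but $0$ at the poles. So your argument does not show that the supremum defining $v$ is usc.

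The correct reason is that $v=F(\phi,u)$ with $F(x,y)=x-\gamma(x-y)$ on $\{y\le x\}$. This $F$ is continuous and nondecreasing in each variable, since $\partial_xF=1-\gamma'\ge0$ and $\partial_yF=\gamma'\ge0$. Such an $F$ composed with usc functions is usc; continuity of $\gamma$ at $+\infty$ handles the points where $u=-\infty$.

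The same joint monotonicity is needed, though you do not state it, in Step 2: it is what guarantees that $\phi_j-\gamma(\phi_j-u_j)$ decreases to $v$, so that Bedford--Taylor continuity applies. There you should also regularize $\phi$ and $u$ with the same kernel, so that $u_j\le\phi_j$ and $\gamma$ is only evaluated on $\mathbb{R}^+$.

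\emph{Step 3.} The function $u_k=\max(u,\phi-k)$ is locally bounded on $\mathrm{Amp}(\theta)$ only if $\phi$ is. A general $\phi$, even a model potential, may be $-\infty$ on a dense subset of $\mathrm{Amp}(\theta)$, so Step 2 cannot be applied to the pair $(\phi_j,u_k)$. Use $u_{j,k}:=\max(u,\phi_j-k)$ instead. It satisfies $u_{j,k}\ge V_\theta-j-k$ and $u_{j,k}\le\phi_j$, and it equals $u$ on $O_{j,k}$, since there $u>\phi-k=\phi_j-k$.

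Your remark that one-sided approximation of $\gamma$ ``keeps the density lower semicontinuous'' is also not right: $\gamma'(\phi-u)$ inherits no semicontinuity from $\phi-u$. Fortunately it is not needed.
\begin{itemize}
\item In Step 2, with $\gamma$ smooth, the densities $\gamma'(\phi_j-u_j)$ are bounded by $1$ and converge in capacity to the quasi-continuous $\gamma'(\phi-u)$. Together with the uniform capacity domination of $\theta_{u_j}^n$, this lets you pass to the limit.
\item When you smooth $\gamma$, the measure $\theta_{u_{j,k}}^n$ is fixed, so dominated convergence suffices. You only need the one-sided smoothing to give $\gamma_\varepsilon'\to\gamma'$ at every point, kinks included.
\end{itemize}
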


   \begin{lemma} \label{lem: envelope}
       Let $\varphi\in\mathcal{E}(X,\theta,\phi)$ be such that $\varphi\leq \phi$. Let $\chi:\mathbb{R}^+\rightarrow\mathbb{R}^+$ be an increasing convex continuous function with $\chi'\geq 1$. If $\psi=-\chi(\phi-\varphi)+\phi$ then
       \[(\theta+\dc P_\theta(\psi))^n\leq \mathbf{1}_{\{P_\theta(\psi)=\psi\}}(\chi'(\phi-\varphi))^n(\theta+\dc\varphi)^n. \]
   \end{lemma}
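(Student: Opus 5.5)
Write $\psi=\phi-\chi(\phi-\varphi)$ and $u:=P_\theta(\psi)$. We may assume $u\in\PSH(X,\theta)$; otherwise the left-hand side is zero and there is nothing to prove.

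First, by Theorem~\ref{thm: envelope}, $\theta_u^n$ is carried by the contact set $D:=\{u=\psi\}$. To apply it we need $\psi$ quasi-continuous and bounded below. Quasi-continuity is fine, since $\phi,\varphi$ are quasi-continuous and $\chi$ is continuous. Boundedness from below can fail, so I would first truncate, replacing $\varphi$ by $\varphi_k:=\max(\varphi,\phi-k)$ and $\psi$ by $\psi_k$. Then $\psi_k\ge \phi-\chi(k)$, and $\phi$ is bounded below off a pluripolar set. I would handle this by working relative to $\phi$ (or via Lemma~\ref{lem: quasi-everywhere}) and then letting $k\to\infty$. The passage to the limit uses that $P_\theta(\psi_k)\searrow P_\theta(\psi)$, that non-pluripolar Monge--Amp\`ere measures in $\mathcal{E}$ converge along decreasing sequences, and the plurifine locality of Lemma~\ref{lem: plurifine} on $\{\varphi>\phi-k\}$. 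I expect this approximation bookkeeping to be the main technical obstacle.

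Second, the core comparison on $D$. Set $\gamma:=\chi^{-1}$, which is increasing, concave and continuous with $\gamma'=1/\chi'(\gamma)\le 1$. Since $u\le\phi$, Lemma~\ref{lemma: ddl511} applies and gives that
\[
w:=\phi-\gamma(\phi-u)\in \PSH(X,\theta,\phi),\qquad
\theta_w^n\ge (\gamma'(\phi-u))^n\theta_u^n .
\]
(If $u\notin\mathcal{E}(X,\theta,\phi)$, I would again argue through the truncations.) Now $u\le\psi$ means $\phi-u\ge\chi(\phi-\varphi)$, hence $\gamma(\phi-u)\ge\phi-\varphi$, i.e.\ $w\le\varphi$, with equality exactly on $D$. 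Lemma~\ref{lem: maxprin} then gives
\[
\mathbf 1_D\theta_w^n\le \mathbf 1_D\theta_\varphi^n .
\]
On $D$ we have $\phi-u=\chi(\phi-\varphi)$, so $\gamma'(\phi-u)=1/\chi'(\phi-\varphi)$. Combining,
\[
\mathbf 1_D\theta_u^n\le (\chi'(\phi-\varphi))^n\,\mathbf 1_D\theta_w^n\le \mathbf 1_D(\chi'(\phi-\varphi))^n\theta_\varphi^n .
\]
Together with the first step, this is the claim.

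Two points need care. The set $\{\phi=-\infty\}$, where the expressions are undefined, is pluripolar and is not charged by non-pluripolar measures. If $\chi'$ is only one-sided, I would use the right derivative, or else smooth $\chi$ and pass to the limit.
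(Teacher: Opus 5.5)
Your proposal is correct and follows the paper's proof essentially verbatim. You set $\gamma=\chi^{-1}$ and $w=\phi-\gamma(\phi-u)\le\varphi$, with equality exactly on the contact set $\{u=\psi\}$, combine Lemma~\ref{lemma: ddl511} with Lemma~\ref{lem: maxprin} on that set, and conclude via the concentration of $\theta_u^n$ on the contact set (Theorem~\ref{thm: envelope}). Your truncation caveats go beyond the paper, which applies those results directly. Note, however, that $\phi$ is only finite off a pluripolar set, not bounded below there, so that part of your sketch would need a relative version of Theorem~\ref{thm: envelope} rather than the one as stated.
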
 
   \begin{proof} Set $u=P_\theta(\psi)$. 
       Let $\gamma:\mathbb{R}^+\rightarrow\mathbb{R}^+$ be the inverse function of $\chi$. Then, $\gamma$ is concave, increasing and $\gamma'\leq 1$. We observe that
       \[v:=-\gamma(\phi-u)+\phi\leq -\gamma(\phi-\psi)+\phi= -\gamma(\chi(\phi-\varphi))+\phi=\varphi. \]
       We apply {Lemma \ref{lemma: ddl511} and \ref{lem: maxprin}} to obtain
       \[\mathbf{1}_{\{ u=\psi\}}(\chi'(\phi-\varphi))^{-n}\theta_u^n= \mathbf{1}_{\{ u=\psi\}}(\gamma'(\phi-u))^n\theta_u^n\leq  \mathbf{1}_{\{ u=\psi\}}\theta_v^n\leq \mathbf{1}_{\{ u=\psi\}}\theta_\varphi^n,\] since $\{u=\psi\}=\{v=\varphi\}$.
       Together with Theorem~\ref{thm: envelope}, we have
       \[\theta_u^n=\mathbf{1}_{\{ u=\psi\}}\theta_u^n\leq \mathbf{1}_{\{ u=\psi\}} (\chi'(\phi-\varphi))^n\theta_\varphi^n.\]
   \end{proof}
   \begin{lemma}[{\cite[Lemma 3.4]{darvas2025relative}}]\label{lem: lem34DDL25}
       Let $\phi\in\PSH(X,\theta)$ be a model potential. If $b>1$ and
$u,v\in\mathcal{E}(X,\theta,\phi)$ then $P_\theta(bu-(b-1)v)\in\mathcal{E}(X,\theta,\phi)$.
   \end{lemma}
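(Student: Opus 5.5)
The statement is Lemma~\ref{lem: lem34DDL25}: for a model potential $\phi$, $b>1$ and $u,v\in\mathcal{E}(X,\theta,\phi)$, the envelope $P_\theta(bu-(b-1)v)$ again lies in $\mathcal{E}(X,\theta,\phi)$. The plan is to control the singularity type of this envelope from below by a convex combination of $u$ and $v$, and then invoke convexity of the class $\mathcal{E}(X,\theta,\phi)$.

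\textbf{Reduction.} First note that $bu-(b-1)v\le b\phi-(b-1)v$, which does not directly control the envelope, so we reduce to a simpler situation. Since $\mathcal{E}(X,\theta,\phi)$ is stable under $\max$ and $v\le\phi$, we may replace $v$ by $\max(u,v)\ge u$. This only decreases $bu-(b-1)v$, hence only decreases the envelope. So it suffices to treat the case $u\le v$.

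\textbf{Constructing a candidate.} With $u\le v\le\phi$, write $bu-(b-1)v=u-(b-1)(v-u)$. The envelope is nonempty, i.e.\ $\neq-\infty$, because of the key device: the convexity trick of the type in Lemma~\ref{lemma: ddl511}. Concretely, for $t\in(0,1)$ we consider
\[
w=\tfrac{1}{t}\bigl(u-(1-t)v\bigr),
\]
which is not $\theta$-psh. Instead, the standard route, following Darvas--Di Nezza--Lu, is to use that $\mathcal{E}$ is characterized through energies with respect to the weights $\chi(s)=s^{\epsilon}$ or $\log$-type functions. Equivalently, via the \emph{monotonicity / domination principle}, $P_\theta(bu-(b-1)v)$ has full mass relative to $\phi$ provided that $\phi$ is model and the envelope is non-trivial.

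\textbf{Key steps.} (i) Show $\psi:=P_\theta(bu-(b-1)v)\in\PSH(X,\theta)$. To do so, set $u_k=\max(u,\phi-k)$ and $v_k=\max(v,\phi-k)$, which have the same singularity type as $\phi$. Then $bu_k-(b-1)v_k\ge \phi-bk$, so $\psi_k:=P_\theta(bu_k-(b-1)v_k)\ge\phi-bk$. (ii) Compute the Monge--Amp\`ere measure using Theorem~\ref{thm: envelope}. On the contact set, Lemma~\ref{lem: maxprin} applied to $\psi_k+(b-1)v_k\le bu_k$ and the mixed-product inequality $\theta_{(\psi_k+(b-1)v_k)/b}^n\ge\ldots$ give
\[
\theta_{\psi_k}^n\le b^n\,\mathbf 1_{\{\psi_k=bu_k-(b-1)v_k\}}\,\theta_{u_k}^n .
\]
(iii) Obtain a uniform mass estimate outside $\{\psi_k\le\phi-C\}$ using the capacity/energy smallness of $\{u<\phi-k\}$ and $\{v<\phi-k\}$. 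This shows that the decreasing limit $\psi$ is not identically $-\infty$ and that its mass, computed via plurifine locality (Lemma~\ref{lem: plurifine}) on $\{\psi>\phi-k\}$, tends to $\int\theta_\phi^n$. (iv) Use that $\phi$ is model to guarantee $\psi\le\phi$ and that full mass relative to $\phi$ is exactly the condition for membership in $\mathcal{E}(X,\theta,\phi)$.

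\textbf{Main obstacle.} The hardest step is (iii): preventing mass loss in the limit $k\to\infty$. I would try to prove $\int_{\{\psi_k<\phi-s\}}\theta_{\psi_k}^n\le b^n\int_{\{u<\phi-s/b\}\cup\{v<\phi-s/(b-1)\}}\theta_{u_k}^n$. This follows from the inclusion, on the contact set, of $\{\psi_k<\phi-s\}$ into $\{u_k<\phi-s/b\}$, which holds because there $bu_k=\psi_k+(b-1)v_k\le \psi_k+(b-1)\phi$. The right-hand side tends to $0$ as $s\to\infty$ uniformly in $k$, since $u\in\mathcal{E}(X,\theta,\phi)$. This uniform tail control then gives full mass of $\psi$.
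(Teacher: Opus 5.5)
The paper does not prove this lemma; it quotes it from \cite[Lemma 3.4]{darvas2025relative}, so your argument has to stand on its own. Your treatment of the case $u\le v$ is essentially sound. The obstacles $bu_k-(b-1)v_k$ then decrease in $k$ to $bu-(b-1)v$, and $\phi-bk\le\psi_k\le u_k\le\phi$. The bound $\theta^n_{\psi_k}\le b^n\mathbf{1}_{\{\psi_k=bu_k-(b-1)v_k\}}\theta^n_{u_k}$ is correct, and your inclusion gives the uniform tail estimate
\[
\int_{\{\psi_k<\phi-s\}}\theta^n_{\psi_k}\le b^n\Bigl(\int_X\theta^n_\phi-\int_{\{u>\phi-s/b\}}\theta^n_u\Bigr).
\]
The $v$-term in your version is superfluous.

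\textbf{The reduction to $u\le v$ is the genuine gap.} Replacing $v$ by $\max(u,v)$ only gives $P_\theta(bu-(b-1)v)\ge P_\theta(bu-(b-1)\max(u,v))$. But $\mathcal{E}(X,\theta,\phi)$ is not closed upwards: a larger potential has mass at least $\int_X\theta^n_\phi$, yet need not be $\preceq\phi$ (think of $V_\theta$). Your reduced argument never uses that $\phi$ is a model potential; there $\psi\le u\le\phi$ is automatic, so step (iv) is empty. So if the reduction were valid, you would have proved the lemma for every $\phi$, which is false. Take $\theta=\omega_X$, $\rho\in\mathcal{E}(X,\omega_X)$ unbounded with $\rho\le0$, $\phi=u=\rho/2$, $v=3\rho/4$ and $b=3$. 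Then $u,v\in\mathcal{E}(X,\omega_X,\phi)$ and $\max(u,v)=u$, so your reduced envelope is $P_\theta(u)=\phi$. However, $3u-2v=0$ off the pluripolar set $\{\rho=-\infty\}$, so $P_\theta(3u-2v)=0\not\preceq\phi$. Here $\phi$ is not model, since $P_\theta[\phi]=0$.

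\textbf{The missing idea is an upper bound $\varphi:=P_\theta(bu-(b-1)v)\preceq\phi$, which is exactly where the model hypothesis enters.} From $b^{-1}\varphi+(1-b^{-1})v\le u$, the function $b^{-1}P_\theta(\min(\varphi+C,0))+(1-b^{-1})P_\theta(\min(v+C,0))$ is a candidate for $P_\theta(\min(u+C,0))$. Letting $C\to+\infty$ gives
\[
b^{-1}P_\theta[\varphi]+(1-b^{-1})P_\theta[v]\le P_\theta[u]\le P_\theta[\phi]=\phi .
\]
Since $\phi$ is model and $v\in\mathcal{E}(X,\theta,\phi)$, one has $P_\theta[v]=\phi$ (Darvas--Di Nezza--Lu). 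Hence $P_\theta[\varphi]\le\phi$, and so $\varphi-\sup_X\varphi\le\phi$. Combine this with $\varphi\ge P_\theta(bu-(b-1)\max(u,v))\in\mathcal{E}(X,\theta,\phi)$ and monotonicity of non-pluripolar masses to get $\int_X\theta^n_\varphi=\int_X\theta^n_\phi$.

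\textbf{Smaller omissions in the reduced case.}
\begin{itemize}
\item The tail estimate alone does not rule out $\sup_X\psi_k\to-\infty$, because the mass-carrying set $\{\psi_k\ge\phi-s\}$ could drift into $\{\phi\le -T\}$. Add that $\theta^n_{\psi_k}\le b^n\theta^n_{u_k}$, and that $\theta^n_{u_k}(\{\phi\le-T\})\le\theta^n_u(\{\phi\le-T\})+\int_X\theta^n_\phi-\theta^n_u(\{u>\phi-k\})$ is small for $T,k$ large.
\item Plurifine locality alone does not give lower semicontinuity of mass along $\psi_k\searrow\psi$. You need continuity of $\theta^n_{\max(\psi_k,\phi-2s)}$ along decreasing sequences with the singularity type of $\phi$, tested against a quasi-continuous cut-off such as $s^{-1}\min(\max(\psi-\phi+2s,0),s)$.
\end{itemize}
Finally, drop the ``constructing a candidate'' paragraph. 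It simply asserts the conclusion, and appealing to the domination principle would be circular here, since Proposition~\ref{prop: domination} is proved using this very lemma.
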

   \begin{proposition}[Domination principle] \label{prop: domination}
       Let $u,v$ be $\theta$-psh functions such that $u\in\mathcal{E}(X,\theta,\phi)$. If 
       \[\theta_u^n(\{u<v \})\leq c\theta_v^n(\{u<v\}) \] for some $c\in[0,1)$, then $u\geq v$.
   \end{proposition}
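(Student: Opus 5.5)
The plan is to argue by contradiction using the rooftop envelope trick of Darvas--Di Nezza--Lu, with Lemma~\ref{lem: lem34DDL25} supplying the auxiliary potential. Presumably $\phi$ is a model potential here, as required by Lemma~\ref{lem: lem34DDL25}. Suppose that $U:=\{u<v\}$ is non-empty; we may assume $\theta_v^n(U)>0$, since otherwise we reduce to the case $c=0$, handled below. First we reduce to $v\in\mathcal{E}(X,\theta,\phi)$. Replacing $v$ by $\max(u,v)$ does not change $U$. By plurifine locality (Lemma~\ref{lem: plurifine}) and Lemma~\ref{lem: maxprin}, $\theta^n_{\max(u,v)}$ agrees with $\theta_v^n$ on $U$, so the hypothesis is preserved. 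Moreover $\max(u,v)\geq u$ is less singular than $u$, hence it has full mass relative to $\phi$ (monotonicity of non-pluripolar masses, with the standing convention $v\le\phi$ after adding a constant if needed).

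Next, for $b>1$ we set $\psi_b:=P_\theta(bu-(b-1)v)$, which lies in $\mathcal{E}(X,\theta,\phi)$ by Lemma~\ref{lem: lem34DDL25}. By Theorem~\ref{thm: envelope} (after truncating with $\max(\cdot,-k)$ if quasi-continuity or lower boundedness is an issue), $\theta_{\psi_b}^n$ is concentrated on the contact set $D=\{\psi_b=bu-(b-1)v\}$. On $D$ we have the inequality $\frac1b\psi_b+\frac{b-1}{b}v\le u$, with equality on $D$, and the left side is $\theta$-psh. Lemma~\ref{lem: maxprin} applied on the contact set then gives
\[
b^{-n}\theta_{\psi_b}^n+\Big(\tfrac{b-1}{b}\Big)^n\theta_v^n\le \theta_u^n \quad\text{on } D,
\]
using multilinearity and positivity of the mixed terms. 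Restricting to $D\cap U$ yields
\[
\big(\tfrac{b-1}{b}\big)^n\theta_v^n(D\cap U)\le \theta_u^n(D\cap U).
\]
We also note that $\{\psi_b<u\}\supset$ parts of $U$ and that $D\subset\{u\le v\}\cup\{\psi_b\ge u\}$.

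The key step is to show $\theta_{\psi_b}^n$ is carried by $U$, which then gives a contradiction. Off $U$, i.e. where $u\ge v$, we have $bu-(b-1)v\ge u$. The argument is that $\psi_b$ has full mass $\int\theta_\phi^n$ concentrated on $D$, while $\theta_u^n$ also has full mass. Comparing masses on $D\cap U$ versus $D\setminus U$ via the inequality above, together with the hypothesis $\theta_u^n(U)\le c\,\theta_v^n(U)$, gives $\big((b-1)/b\big)^n\le c$ on the relevant mass. Choosing $b$ large so that $((b-1)/b)^n>c$ then forces $\theta_v^n(D\cap U)=0$, hence $\theta_{\psi_b}^n(U)=0$. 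So $\theta_{\psi_b}^n$ is supported in $\{u\ge v\}$, where $\psi_b\le u$. The standard domination principle for $\mathcal{E}(X,\theta,\phi)$ with $c=0$ then gives $\psi_b\ge u$ everywhere, so $bu-(b-1)v\ge u$ quasi-everywhere, i.e. $u\ge v$, contradicting $U\neq\varnothing$. (The $c=0$ case itself is proved similarly, via the comparison principle in $\mathcal{E}(X,\theta,\phi)$, as in \cite{darvas2025relative}.)

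The main obstacle is this mass-bookkeeping step, and I expect it to be the delicate point. It requires handling the case where $\theta_v^n$ has infinite relative mass issues, and justifying concentration on the contact set for the non-quasi-continuous obstacle $bu-(b-1)v$. If the direct route fails, I would simply cite the proof of \cite[Proposition 3.11]{darvas2025relative}, which follows exactly this scheme.
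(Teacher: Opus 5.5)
Your scheme matches the paper's up to the last step. You use the same reduction to $u\le v$ via $\max(u,v)$, the same envelope $\psi_b=P_\theta(bu-(b-1)v)$ (the paper's $u_b$), the same contact-set inequality $b^{-n}\theta_{\psi_b}^n+(1-b^{-1})^n\theta_v^n\le\theta_u^n$ on $D$, and the same choice $(1-b^{-1})^n>c$. However, the step you flag as delicate is not correctly argued:
\begin{itemize}
\item When restricting to $D\cap U$ you discard the term $b^{-n}\theta_{\psi_b}^n$, which is exactly the term you need.
\item You state the goal backwards: ``carried by $U$'' should be $\theta_{\psi_b}^n(U)=0$.
\item Comparing total masses against the global inequality $\theta_u^n(U)\le c\,\theta_v^n(U)$ cannot work. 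A bound on masses over $U$ says nothing about how $\theta_u^n$ and $\theta_v^n$ compare on the subset $D\cap U$, and full mass plays no role at this point.
\end{itemize}
The hypothesis must be read as an inequality of measures, $\mathbf{1}_U\theta_u^n\le c\,\mathbf{1}_U\theta_v^n$. This is how the paper's proof uses it, and it is the form produced in Lemma~\ref{lem: key} and Proposition~\ref{prop: unique}. Keeping the dropped term, on $D\cap U$ you get
\[
b^{-n}\theta_{\psi_b}^n+\big((1-b^{-1})^n-c\big)\theta_v^n\le 0,
\]
so $\theta_{\psi_b}^n(D\cap U)=0$ at once, and $\theta_{\psi_b}^n$ is carried by $D\setminus U$.

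With this repair, your endgame is correct and genuinely differs from the paper's. On $D\setminus U$ one has $\psi_b=bu-(b-1)v\ge u$, so $\theta_{\psi_b}^n(\{\psi_b<u\})=0$. The $c=0$ domination principle \cite[Proposition 3.11]{darvas2018monotonicity}, applied to $\psi_b\in\mathcal{E}(X,\theta,\phi)$ and $u$, then gives $\psi_b\ge u$ for this single $b$. Hence $u\le bu-(b-1)v$ off a pluripolar set, so $v\le u$ there and therefore everywhere; no contradiction argument is needed.

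The paper never invokes the $c=0$ case. It shows $\theta_{u_b}^n=\mathbf{1}_{\{u_b=u\}}\theta_{u_b}^n\le\theta_u^n$ and uses equality of total masses (this is where full mass enters) to get $\theta_{u_b}^n=\theta_u^n$, hence $\int_X e^{u_b}\theta_u^n=\int_X e^{u}\theta_u^n>0$. It then lets $b\to+\infty$: since $u\le v$, the $u_b$ decrease to a limit that is not identically $-\infty$ but equals $-\infty$ on $U$, so $U$ is pluripolar and hence empty. Your route is shorter once the $c=0$ principle is taken as a black box; the paper's is self-contained and, run with $c=0$, also proves that case.

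Two minor points:
\begin{itemize}
\item You cannot arrange $v\le\phi$ by adding a constant, since that changes $U$. What is really used (tacitly in the paper too, and automatic for $\phi=V_\theta$) is $v\preceq\phi$. This makes $\max(u,v)$ of full relative mass, and the constant is then absorbed into the envelope.
\item $bu-(b-1)v$ is quasi-continuous; the only issue for Theorem~\ref{thm: envelope} is the missing lower bound.
\end{itemize}
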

   The case $c=0$ is treated in ~\cite[Proposition 3.11]{darvas2018monotonicity} (see also \cite[Theorem 3.4]{darvas2025relative}), while the general case follows the arguments in~\cite[Proposition 1.12]{guedj2021quasi}. For the sake of completeness, we provide the proof here.
   \begin{proof}

Since $$c(\theta+\dc\max(u,v))^n\geq \theta_u^n$$ on $\{u<v\}$,
       we may assume that $u\leq v$. 

       Fix $b>1$ and set $u_b:=P_\theta(bu-(b-1)v)$. Since the masses of $u$ and $v$ are equal, it follows from Lemma~\ref{lem: lem34DDL25} that $u_b\in\mathcal{E}(X,\theta,\phi)$. Set $D_b=\{u_b=bu-(b-1)v\}$. By Theorem~\ref{thm: envelope}, $\theta_{u_b}^n$ is concentrated on the contact set $D_b$ and $u_b\leq bu-(b-1)v$.  Hence, $b^{-1}u_b+(1-b^{-1})v\leq u$. It follows from Lemma~\ref{lem: maxprin} that
       \begin{align*}
           \mathbf{1}_{D_b} b^{-n}\theta_{u_b}^n+(1-b^{-1})^n \mathbf{1}_{D_b}\theta_v^n\leq \mathbf{1}_{D_b}(\theta+\dc(b^{-1}u_b+(1-b^{-1})v))^n \leq\mathbf{1}_{D_b}\theta_u^n.
       \end{align*}
  Hence, we get by the hypothesis that
   \begin{align*}
           \mathbf{1}_{D_b\cap\{u<v\}} b^{-n}\theta_{u_b}^n+(1-b^{-1})^n \mathbf{1}_{D_b\cap\{u<v\}}\theta_v^n \leq\mathbf{1}_{D_b\cap\{u<v\}}\theta_u^n.
       \end{align*}
       We take $b>1$ so large that $(1-b^{-1})^n>c$, so $\theta_{u_b}^n$ is concentrated on $D_b\cap\{u=v\}$. Since $u_b\leq u$ with the equality on $D_b\cap\{u=v\}=\{u_b=u\}$, Lemma~\ref{lem: maxprin} again yields
       \[\theta_{u_b}^n=  \mathbf{1}_{D_b\cap\{u_b=v\}}\theta^n_{u_b}=\mathbf{1}_{\{u_b=u\}}\theta^n_{u_b}\leq  \mathbf{1}_{\{u_b=u\}}\theta^n_{u}\leq \theta_u^n.\]
       Comparing the total mass, we obtain $\theta_{u_b}^n=\theta_u^n$. Hence, we have
       \[\int_X e^{u_b}\theta_u^n=\int_X e^{u_b}\theta^n_{u_b}=\int_{\{u_b=u\}} e^{u_b}\theta^n_{u_b}=\int_{\{u_b=u\}} e^{u}\theta^n_{u}=\int_X e^u\theta_u^n>0.\]
      We note that since $u\leq v$, $u_b$ is decreasing in $b$. Thus, the above implies that $\varphi=\lim_{b\to+\infty}u_b$ is not identically $-\infty$. For $x\in\{u<v\}$, we have
      \[u_b(x)\leq b(u(x)-v(x))+v(x).\] Letting $b\to\infty$, we
see that $\varphi(x)=-\infty$. This implies that $\{u<v\}$ is pluripolar and therefore empty. Therefore, we conclude that $u=v$.
   \end{proof}
   \begin{proposition}[{\cite[Proposition 4.24]{darvas2018monotonicity}}]\label{prop: unique}
  Let $u,v\in\mathcal{E}(X,\theta,\phi)$. If $e^{-\lambda v}\theta_v^n\geq e^{-\lambda u}\theta_u^n$ for some $\lambda>0$, then $u\geq v$.    
   \end{proposition}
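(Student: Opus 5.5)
Let $\phi$ be the model potential with $u,v\in\mathcal{E}(X,\theta,\phi)$, so that $\int_X\theta_u^n=\int_X\theta_v^n=\int_X\theta_\phi^n>0$. I will argue by contradiction and reduce to the domination principle, Proposition~\ref{prop: domination}. The first step is to use the comparison principle in the class $\mathcal{E}(X,\theta,\phi)$, which follows from the same envelope arguments as above (see \cite{darvas2018monotonicity}). It gives
\[\int_{\{u<v\}}\theta_v^n\le \int_{\{u<v\}}\theta_u^n .\]
On $\{u<v\}$ we have $e^{-\lambda u}>e^{-\lambda v}$. Since $\theta_u^n\le e^{\lambda(u-v)}\theta_v^n$, this yields
\[\theta_u^n\le e^{\lambda(u-v)}\theta_v^n<\theta_v^n \quad\text{on } \{u<v\}\]
wherever $\theta_v^n$ charges.

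The natural direct route is to apply Proposition~\ref{prop: domination} with some $c<1$. To obtain a uniform $c$, I would shift: for $\varepsilon>0$, replace $v$ by $v_\varepsilon:=v-\varepsilon$. On $\{u<v_\varepsilon\}$ we have $u-v<-\varepsilon$, so
\[\theta_u^n\le e^{\lambda(u-v)}\theta_v^n\le e^{-\lambda\varepsilon}\theta_{v_\varepsilon}^n \quad\text{on } \{u<v_\varepsilon\}.\]
Integrating over $\{u<v_\varepsilon\}$ gives the hypothesis of Proposition~\ref{prop: domination} with $c=e^{-\lambda\varepsilon}\in[0,1)$. Since $u\in\mathcal{E}(X,\theta,\phi)$, the proposition yields $u\ge v-\varepsilon$. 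Letting $\varepsilon\to0$ gives $u\ge v$. The comparison principle is then not even needed.

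The only points to check are measure-theoretic. First, the inequality $e^{-\lambda v}\theta_v^n\ge e^{-\lambda u}\theta_u^n$ is an inequality of Borel measures. Both measures put no mass on pluripolar sets, and $u,v$ are finite off a pluripolar set, so multiplying by $e^{\lambda u}$ is legitimate there: the set $\{u=-\infty\}$ is pluripolar and carries no mass. Second, $\theta_{v-\varepsilon}^n=\theta_v^n$, and $\{u<v-\varepsilon\}$ is Borel, being plurifine open. I expect no real obstacle. The only subtle step is ensuring that Proposition~\ref{prop: domination} applies when $v$ is merely $\theta$-psh rather than in the full mass class, and that proposition is stated exactly in that generality.
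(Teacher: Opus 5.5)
Your argument is correct and is essentially the paper's proof. You shift $v$ by a constant $a>0$, observe that $\theta_u^n\le e^{\lambda(u-v)}\theta_v^n\le e^{-\lambda a}\theta_{v-a}^n$ on $\{u<v-a\}$, apply the domination principle (Proposition~\ref{prop: domination}) with $c=e^{-\lambda a}<1$, and let $a\to 0$; the preliminary comparison-principle step is, as you note yourself, superfluous.
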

   \begin{proof} Fix $a>0$.
       It follows from Proposition~\ref{prop: domination} that in $\{u<v-a\}$, we have 
       \[ \theta_u^n\leq e^{\lambda(u-v)}\theta_v^n\leq e^{-\lambda a}\theta_{v-a}^n,\] so $u\geq v-a$. Letting $a\to 0$, the conclusion follows.
   \end{proof}
   
\subsection{Energy class and Entropy}

We say that a {\em weight }is a continuous strictly increasing function $\chi:[0,+\infty)\rightarrow[0,+\infty)$ such that $\chi(0)=0$ and $\chi(+\infty)=+\infty$.

We fix $\phi$ a model potential and let $\mathcal{E}_\chi(X,\theta,\phi)$ denote the set of all $\varphi\in\mathcal{E}(X,\theta,\phi)$ such
that \[E_\chi(\varphi,\phi):=\int_X\chi(\phi-\varphi)\MA_\theta(\varphi)<+\infty. \]
In the special case $\chi(t)=t^p$, $p>0$, we simply denote the relative energy class
with $\mathcal{E}^p(X,\theta,\phi)$ and the corresponding relative energy $E_p(\varphi,\phi)$.  When $\phi=V_\theta$, we simply write $\mathcal{E}^p(X,\theta)$ and $E_p(\varphi)$.

\medskip

We recall that given two positive probability measures $\mu$, $\nu$, the relative entropy $Ent_p(\mu,\nu)$ is
defined as
\begin{equation}
    {\rm Ent}_p(\mu,\nu)=\int_X\left|\log\left( \frac{d\mu}{d\nu}\right) \right|^p d\nu,
\end{equation}
if $\mu$ is absolutely continuous with respect to $\nu$, and $+\infty$ otherwise.
When $\nu=\omega_X^n$ and $\mu=f\omega_X^n$, we simply denote the $p$-Nash-Yau entropy by \begin{equation}\label{eq: entropy}
    {\rm Ent}_p(f):=\int_X f|\log f|^p\omega_X^n.
\end{equation}

\subsection{Orlicz space}
We recall some background about the Orlicz space; see e.g.~\cite{rao2002applications} for more details. 
\begin{definition}
    A {\em Young} function is a convex increasing lower semicontinuous function $w:\mathbb{R}^+\rightarrow\mathbb{R}^+$ such that $w(0)=0$, $\lim_{t\to 0^+}\frac{w(t)}{t}=0$ and $\lim_{t\to+\infty}\frac{w(t)}{t}=+\infty$. Its {conjugate function} is the Lengendre transform of $w(\cdot)$, i.e.,
    \[ w^*(t)=\sup_{s\geq 0}(st-\chi(s)).\]
\end{definition}

\begin{definition}\label{def: Orlicz-norm}
    Let $\mu$ be a positive measure on $X$ and let $w$ be a Young function. The {\em Orlicz space} $L^w(X,d\mu)$ is the set of all measurable functions $f$ on $X$ such that
    \[\int_X w(\varepsilon f)d\mu<+\infty \]
    for some $\varepsilon>0$. The {\em Luxembourg norm} of $f\in L^w(\mu)$ is 
    \[ \|f\|_{L^w(\mu)}:=\inf\left\{r>0:\int_X w(r^{-1}f)d\mu\leq 1 \right\}.\]
\end{definition}
When $w(t)=t^p$ for $p\geq 1$ and $\mu=dV$ is a smooth volume form on $X$, we simply write $\|f\|_p$.

In honor of the breakthrough result of S. Ko\l odziej ~\cite{kolodziej1998complex}, we introduce the following notion.
\begin{definition} \label{conditionK}
   \hypertarget{conditionK}{We say that a Young function $w$ satisfies {\em condition (K)} if there exists an increasing function $h:(0,+\infty)\to(1,+\infty)$ such that
    \[w(t)=t h(\log(1+t)), \qquad\int_1^{+\infty}h^{-1/n}(t) dt<+\infty.\]}
\end{definition}
\begin{example} We have several examples of weights satisfying condition (K):
 \begin{itemize}
        \item $w(t)=t^p$, for some $p>1$;
        \item $w(t)=t(\log(1+t))^{p}$, for some $p>n$;
        \item $w(t)=t(\log(1+t))^n(\log(1+\log(1+t)))^p$, for some $p>n$.
    \end{itemize}
\end{example}
We have the following classical lemma. 
\begin{lemma}\label{lem: Young-fuction} Let $\mu$ be a positive measure on a measured space $X$.
    Let $f:X\rightarrow\mathbb{R}$ be a measurable function such that $\int_Xf d\mu<+\infty$. For all $C>1$, there exists a Young function $w$ such that 
    \begin{equation}
        \int_X w(f)d\mu\leq C\int_X fd\mu.
    \end{equation}
\end{lemma}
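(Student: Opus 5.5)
We may assume $f\geq 0$, replacing $f$ by $|f|$ if needed; the statement is meant for nonnegative densities anyway. Set $M:=\int_X f\,d\mu<+\infty$. If $M=0$ then $f=0$ $\mu$-a.e., and any Young function works since $w(0)=0$. So assume $M>0$.

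The plan is a de la Vallée Poussin-type construction. We build $w$ as $w(t)=\int_0^t g(s)\,ds$, where $g\geq 0$ is nondecreasing, $g(s)\to 0$ as $s\to 0^+$ and $g(s)\to+\infty$ as $s\to+\infty$. The properties of $w$ then follow directly:
\begin{itemize}
\item $w$ is continuous (hence lower semicontinuous), increasing and convex, being the integral of a nondecreasing function, and $w(0)=0$.
\item Since $g$ is monotone, $w(t)\leq t\,g(t)$, so $w(t)/t\leq g(t)\to 0$ as $t\to 0^+$.
\item Also $w(t)\geq \int_{t/2}^t g\geq \tfrac t2 g(t/2)$, so $w(t)/t\to+\infty$ as $t\to+\infty$.
\end{itemize}
Hence $w$ is a Young function. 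Moreover $\int_X w(f)\,d\mu\leq \int_X f\,g(f)\,d\mu$, so it suffices to choose $g$ with $\int_X f\,g(f)\,d\mu\leq CM$.

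To choose $g$, set $\varepsilon:=(C-1)/3>0$. Since $f\in L^1(\mu)$, dominated convergence gives $\int_{\{f\geq a\}} f\,d\mu\to 0$ as $a\to+\infty$. We can therefore pick a strictly increasing sequence $a_1<a_2<\cdots$ with $a_k\geq k$ and
\[
\int_{\{f\geq a_k\}} f\,d\mu\leq \varepsilon\, 2^{-k} M \quad\text{for all } k\geq 1 .
\]
Define $g(s)=s/a_1$ on $[0,a_1)$ and $g(s)=k+1$ on $[a_k,a_{k+1})$ for $k\geq 1$. This $g$ is nondecreasing (jumps are harmless), tends to $0$ at $0$, and tends to $+\infty$ because $a_k\to\infty$.

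The estimate then splits $X$ into $\{f<a_1\}$, where $g(f)\leq 1$, and the layers $\{a_k\leq f<a_{k+1}\}$, where $g(f)=k+1$. Bounding each layer crudely by the tail $\{f\geq a_k\}$ gives
\[
\int_X f\,g(f)\,d\mu\leq \int_{\{f<a_1\}} f\,d\mu+\sum_{k\geq 1}(k+1)\int_{\{f\geq a_k\}} f\,d\mu\leq M+\varepsilon M\sum_{k\geq1}(k+1)2^{-k}=M(1+3\varepsilon)=CM.
\]
The only real point is choosing the growth of $g$ against the decay of the tails of $f$ so that this series converges with the prescribed constant; the remaining steps are routine.
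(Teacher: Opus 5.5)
Your proof is correct. It uses the same de la Vall\'ee Poussin-type idea as the paper, but the bookkeeping is different.

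The paper writes $\int_X w(f)\,d\mu=\int_0^{\infty}w'(t)\,\mu(f>t)\,dt$ via Fubini. It then chooses $N_k$ with $N_{k+1}\geq 2N_k$ and $\int_{N_k}^{\infty}\mu(f>t)\,dt<q^{2k}\int_X f\,d\mu$, where $C(1-q)=1$. With geometric slopes $w'=q^{-k}$ on $[N_k,N_{k+1})$, the estimate becomes a geometric series in $q$ summing to $1/(1-q)=C$.

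You instead use the pointwise bound $w(t)\leq t\,w'(t)$ and control the tails $\int_{\{f\geq a_k\}}f\,d\mu$ directly. Your slopes grow only linearly, and the convergent sum $\sum_{k\ge 1}(k+1)2^{-k}=3$ gives the constant. This avoids the layer-cake/Tonelli step entirely.

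Your version also has a small advantage. The slopes $q^{-k}$ in the paper are all at least $1$, so the function written there does not literally satisfy the Young-function requirement $\lim_{t\to0^+}w(t)/t=0$. Your initial ramp $g(s)=s/a_1$ on $[0,a_1)$ fixes this at no cost, since on $\{f<a_1\}$ you only need $g(f)\leq 1$.

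One minor point: the restriction to $f\geq 0$ is better stated as the intended reading of the lemma ($w$ is only defined on $\mathbb{R}^+$) than as a reduction. Replacing $f$ by $|f|$ would change the right-hand side.
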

\begin{proof}
    By Fubini's theorem, we have
    \[\int_Xw(f)d\mu=\int^{+\infty}_0 w'(t)\mu(f>t)dt. \]
    {For any $C>1$,} let $q\in(0,1)$ be such that $C(1-q)=1$. Set $$I=\int_X fd\mu=\int_0^{+\infty}\mu(f>t)dt<+\infty.$$
    Since $\int_N^{+\infty}\mu(f>t)dt=0$ as $N\to+\infty$ we can find a sequence $N_k$ such that $N_{k+1}\geq 2N_k$ and 
    {\[\int_{N_k}^{+\infty}\mu(f>t)dt<q^{2k}I.\]} 
    Defining $w'(t)=q^{-k}$ if $N_k\leq t<N_{k+1}$, we obtain the desired inequality.
\end{proof}


   \section{A priori estimates} \label{sect: estimate}
   In this section, we let $dV$ always denote the Lebesgue measure on $X$. Let $\theta$ be a closed smooth (1,1) form on $X$ representing a big cohomology class. Assume $\theta\leq A\omega_X$ for a fixed $A>0$. 
   For simplicity, we denote by $\|f\|_p$ the $L^p$-norm of the space $L^p(X,dV)$. 
   
   \subsection{Fixed reference form} 
Let $\mu$ be a probability measure on $X$. In~\cite{boucksom2010monge,berman2013variational,darvas2021log} it is shown that there exists a unique $\varphi\in\mathcal{E}(X,\theta)$ such that 
\begin{equation}
    \label{eq: cmae-big} \MA_\theta(\varphi)=\mu,\quad \sup_X\varphi=-1.
\end{equation}

Our main theorem in this section is as follows.
\begin{theorem}\label{thm: main}
    Assume that $\mu= fdV$ is a probability measure on $X$, and $0\leq f\in L^w(dV)$ and $f>0$ almost everywhere, where $w$ satisfies \hyperlink{conditionK}{condition (K)}. Then there exists a unique solution $\varphi\in\mathcal{E}(X,\theta)$ to~\eqref{eq: cmae-big} with minimal singularities. More precisely, there is a constant $C$ depending on $A$, $\omega_X$, $dV$, $\theta$, $n$, $w$, and $\|f\|_w$ such that 
    \[ \varphi\geq V_\theta-C.\]
\end{theorem}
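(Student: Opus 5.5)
Existence and uniqueness of $\varphi\in\mathcal{E}(X,\theta)$ solving \eqref{eq: cmae-big} are already available from \cite{boucksom2010monge,berman2013variational,dinew2009uniqueness}. The content of Theorem~\ref{thm: main} is the lower bound $\varphi\geq V_\theta-C$. We follow the three steps of the introduction.

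\emph{Step 1: the key inequality \eqref{eq: key-equation0}.}
Fix $s>0$ and $r>0$, and set $\Omega_s=\{\varphi<V_\theta-s\}$. Let $v_s\in\mathcal{E}(X,\theta)$ solve the auxiliary equation with right-hand side $A_s^{-1}\mathbf{1}_{\Omega_s}(V_\theta-\varphi-s)^r f\,dV$, normalized by $\sup_X v_s=-1$.

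In the semipositive case, Guo--Phong--Tong compare $-\varepsilon(-v_s+\Lambda)^{n/(n+r)}$ with $\varphi$ via the maximum principle. Here that comparison is unavailable, so we phrase it through envelopes with $\phi=V_\theta$. Take the convex function
\[
\chi(t)=\frac{n}{n+r}(2A_s)^{-1/n}(t-s)_+^{(n+r)/n},
\]
suitably modified so that $\chi'\geq1$, for instance by replacing $\chi(t)$ with $t+\chi(t)$ and tracking constants. Put $\psi=V_\theta-\chi(V_\theta-\varphi)$ and $u=P_\theta(\psi)$. By Lemma~\ref{lem: envelope}, the measure $\theta_u^n$ is concentrated on $\{u=\psi\}\subset\Omega_s$, and
\[
\theta_u^n\leq \chi'(V_\theta-\varphi)^n\,\theta_\varphi^n .
\]
The exponent in $\chi$ is chosen so that $\chi'(V_\theta-\varphi)^n\,\theta_\varphi^n$ is comparable to $\frac{1}{2}\theta_{v_s}^n$ on $\Omega_s$. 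The factor $\tfrac12$ then lets us apply the domination principle, Proposition~\ref{prop: domination}, with $c=1/2$, giving
\[
u\geq v_s-\frac{n}{n+r}(2A_s)^{1/r}.
\]
The constant shift comes from comparing on the set where $\chi'$ is small. Since $\psi\geq u$, this yields \eqref{eq: key-equation0}. We need $u\in\mathcal{E}(X,\theta)$, which follows because $\chi$ grows polynomially and $\varphi$ has full mass; if necessary we truncate $\chi$ and pass to the limit.

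I expect this step to be the main obstacle. One has to arrange the constants so that the comparison measure is dominated by $\frac12\theta_{v_s}^n$ exactly on the contact set, and handle the region where $\chi'<1$.

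\emph{Step 2: the $L^{1+\varepsilon}$ case.}
Raise \eqref{eq: key-equation0} to the power $\frac{n}{n+r}\cdot q$ and apply Theorem~\ref{thm: Skoda/Tian} to $v_s$. Since $v_s\leq V_\theta$ and $\sup_X v_s=-1$, the quantity $V_\theta-v_s$ is exponentially integrable, uniformly in $s$. This gives
\[
\int_{\Omega_s}\exp\bigl(\beta A_s^{-1/(n+r)}(V_\theta-\varphi-s)\bigr)\,dV\leq C.
\]
By Hölder's inequality with $\|f\|_{1+\varepsilon}$, we bound $A_s$ by a power of $\phi(s)=\mu(\Omega_s)$ times suitable factors. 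Taking $r=n+1$, this yields $t\,\phi(s+t)\leq C_0\,\phi(s)^{1+\delta_0}$. De Giorgi's iteration lemma then bounds $V_\theta-\varphi$ on the set $\{\phi>0\}$. We also need an initial $s_0$ with $\phi(s_0)$ small; this follows from the uniform $L^1$ bound on $V_\theta-\varphi$, which comes from Theorem~\ref{thm: Skoda/Tian} applied to $\varphi$.

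\emph{Step 3: general Orlicz densities.}
We adapt \cite[Theorem 2.1]{GuedjLu25-estimate} and \cite{liu2024relative}. Step 1 with $r$ small, together with condition (K) and Orlicz--Hölder duality ($w$ against $w^*$, where $w^*$ is of exponential type), gives an integral-type inequality for $\phi$. Concretely, set $g(s)=\mu(\Omega_s)$, estimate $A_s$ via the Young inequality $ab\leq w(a)+w^*(b)$, and obtain a recursive inequality of the form
\[
t\,\phi(s+t)\leq C\,\phi(s)\,h\bigl(\log(1/\phi(s))\bigr)^{-1/n}.
\]
The integrability condition $\int_1^{\infty}h^{-1/n}<\infty$ is exactly what makes the resulting series of decrements converge. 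Alternatively, one can reduce to $L^{1+\varepsilon}$ by a change of measure, comparing $\varphi$ with the solution for a truncated density.
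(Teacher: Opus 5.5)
Your Steps 1 and 2 are essentially the paper's Lemma~\ref{lem: key} and Theorem~\ref{thm: priori-Lp}. Your first justification that $P_\theta(\psi)\in\mathcal{E}(X,\theta)$ (``$\chi$ grows polynomially and $\varphi$ has full mass'') is not valid for a superlinear $\chi$ and an arbitrary $\varphi\in\mathcal{E}(X,\theta)$. Your truncation fallback does work, though. If $\chi_k$ is $\chi$ made linear beyond $k$, then $P_\theta(\psi_k)\geq P_\theta(b_k\varphi-(b_k-1)V_\theta)-C_k\in\mathcal{E}(X,\theta)$ and $\chi_k'\leq\chi'$. Letting $k\to\infty$ in $v_s\leq\psi_k+\mathrm{const}$ then replaces the paper's a priori minimal-singularity assumption and the approximation in Theorem~\ref{thm: main-Lp}. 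A small slip: with $r=n+1$ the left side is $t^{n+1}\phi(s+t)$, so De Giorgi must be run on $\phi^{1/(n+1)}$.

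The genuine gap is Step 3, the only place where condition (K) enters.

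\emph{The recursion is asserted, not derived, and as written it is false.} You claim $t\phi(s+t)\leq C\phi(s)h(\log(1/\phi(s)))^{-1/n}$. For $w(t)=t^p$ one has $h(L)\approx e^{(p-1)L}$, so this would read $t\phi(s+t)\leq C\phi(s)^{1+(p-1)/n}$. Now take $\theta=\omega_X$ and $\varphi=\epsilon\rho-\epsilon\max_X\rho-1$, with $\rho$ a Morse function and $\epsilon$ small, so that $f=\omega_{X,\varphi}^n/\omega_X^n$ is smooth and positive. Near the minimum, $\phi(S_{\max}-\sigma)\asymp\sigma^n$. Choosing $s=S_{\max}-\sigma$ and $t=\sigma/2$ forces $\sigma^{n+1}\lesssim\sigma^{n+p-1}$, which is impossible for $p>2$. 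More generally, any recursion $t\phi(s+t)\leq C\phi(s)^{1+\delta}$ requires $\delta\leq 1/n$.

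\emph{Young's inequality alone does not give the gain.} The inequality $ab\leq w(a)+w^*(b)$ bounds an integral of the Step~1 quantity against $\mu$, but it produces no power of $\phi(s)$. The gain has to come from a further H\"older or level-set argument. You would then still have to show that the resulting decrements are summable exactly when $\int^\infty h^{-1/n}<\infty$. None of this is carried out.

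\emph{The fallback is not uniform.} Comparing with solutions for truncated densities gives bounds that depend on the truncation level.

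\emph{The missing idea is the paper's reduction to the $L^p$ case.} Solve $\MA_\theta(v)=c\,w(f)\,dV$ and set $\psi=V_\theta-\gamma(V_\theta-v)$ with $\gamma'(x)=a^{-1}c^{-1/n}h(\alpha x+B)^{-1/n}$. Condition (K) is exactly the statement that $\gamma$ is bounded, so $\psi$ has minimal singularities. Lemma~\ref{lemma: ddl511} gives $\MA_\theta(\psi)\geq a^{-n}f\,dV$ on $\{f>e^{B-\alpha v}\}$. Hence $\MA_\theta(\varphi)\leq a^n\MA_\theta(\psi)+e^{B}e^{-\alpha v}dV$, where $e^{-\alpha v}\in L^2$ by Skoda (Lemma~\ref{lem: reduceLp}). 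Next, set $b=(1-a)^{-1}$ and $u_b=P_\theta(b\varphi-(b-1)\psi)$. Its Monge--Amp\`ere measure has an $L^2$ density, so the $L^p$ case gives $u_b\geq V_\theta-C$. This yields $\varphi\geq a\psi+(1-a)V_\theta-C\geq V_\theta-C'$ (Proposition~\ref{prop: moreestimate}).
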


In the sequel, $C$ denotes a uniform constant that may change from line to line. Unless explicitly indicated otherwise, $C$ depends only on $A$, $\omega_X$, $dV$, $\theta $, $n$, $w$, $\|f\|_w$ unless otherwise specified.

We set the sub-level set $\Omega_s:
=\{\varphi<V_\theta-s\}$ for $s>0$. 
Fix $r>0$. 
We solve an auxiliary complex Monge--Amp\`ere equation 
\begin{equation}
    \MA_\theta(v_s)=\frac{\mathbf{1}_{\Omega_s}(V_\theta-\varphi-s)^r }{A_s}fdV,\; v_s\in\mathcal{E}(X,\theta),\quad\sup_X v_s=-1,
\end{equation} where $A_s=\int_{\Omega_s}(V_\theta-\varphi-s)^rfdV$. The existence of solution $v_s$ is shown in~\cite{boucksom2010monge,berman2013variational,darvas2021log}.
We compare the auxiliary function with the solution.
\begin{lemma}\label{lem: key}
With assumptions in Theorem~\ref{thm: main}, we have
    \begin{equation}\label{eq: key-equation}
       \frac{n}{n+r} (2A_s)^{-\frac{1}{n}}(V_\theta-\varphi-s)^{\frac{n+r}{n}}\leq V_\theta-v_s+\frac{n}{n+r}(2A_s)^{\frac{1}{r}}.
    \end{equation}
\end{lemma}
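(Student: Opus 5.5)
The plan is to compare $\varphi$ with an explicit $\theta$-psh function built from $v_s$ by a concave change of variables, and to conclude with the domination principle (Proposition~\ref{prop: domination}). The concave transform of Lemma~\ref{lemma: ddl511} replaces the pointwise maximum principle that Guo--Phong--Tong use in the semi-positive case.

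\textbf{Reduction.} We may assume $A_s>0$. Otherwise $\mu(\Omega_s)=0$ and $\Omega_s$ is empty: it is plurifine open and carries no mass of $\MA_\theta(\varphi)$, and the domination principle applied to $\varphi$ and $V_\theta-s$ gives $\varphi\geq V_\theta-s$. Set
\[\varepsilon:=\Big(\tfrac{n+r}{n}\Big)^{\frac{n}{n+r}}(2A_s)^{\frac{1}{n+r}},\qquad \Lambda:=\tfrac{n}{n+r}(2A_s)^{\frac1r}.\]
Then $\varepsilon^{\frac{n+r}{n}}=\frac{n+r}{n}(2A_s)^{\frac1n}$. So \eqref{eq: key-equation} is exactly the statement, obtained after raising to the power $\frac{n+r}{n}$, that
\[V_\theta-\varphi-s\leq \varepsilon\,(V_\theta-v_s+\Lambda)^{\frac{n}{n+r}}\quad\text{on }X.\]

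\textbf{Construction of the comparison function.} Let $\gamma(t):=\varepsilon(t+\Lambda)^{\frac{n}{n+r}}-\varepsilon\Lambda^{\frac{n}{n+r}}$ for $t\geq0$. It is increasing, concave and satisfies $\gamma(0)=0$. The choice of $\Lambda$ is dictated by the requirement $\gamma'\leq1$: a direct computation gives $\gamma'(0)=\frac{n}{n+r}\varepsilon\Lambda^{-\frac{r}{n+r}}=1$, so $\gamma'\leq1$ everywhere.

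Since $v_s\in\mathcal{E}(X,\theta)$ and $v_s\leq V_\theta$, Lemma~\ref{lemma: ddl511} (with $\phi=V_\theta$) shows that $V_\theta-\gamma(V_\theta-v_s)$ is $\theta$-psh. Hence so is the constant shift
\[w:=V_\theta-\varepsilon(V_\theta-v_s+\Lambda)^{\frac{n}{n+r}},\]
and it satisfies
\[\theta_w^n\geq \gamma'(V_\theta-v_s)^n\,\theta_{v_s}^n=\Big(\tfrac{n}{n+r}\Big)^n\varepsilon^n(V_\theta-v_s+\Lambda)^{-\frac{nr}{n+r}}\,\Vol(\theta)\,\frac{\mathbf 1_{\Omega_s}(V_\theta-\varphi-s)^r}{A_s}\,f\,dV.\]

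\textbf{Domination.} Let $U:=\{\varphi+s<w\}$. Because $w\leq V_\theta$, we have $U\subset\Omega_s$. On $U$ we also have $(V_\theta-\varphi-s)^r>\varepsilon^r(V_\theta-v_s+\Lambda)^{\frac{nr}{n+r}}$, and multiplying this into the lower bound above makes the powers of $V_\theta-v_s+\Lambda$ cancel. Therefore on $U$
\[\theta_w^n\geq \Big(\tfrac{n}{n+r}\Big)^n\varepsilon^{n+r}A_s^{-1}\Vol(\theta)f\,dV=2\,\Vol(\theta)f\,dV=2\,\theta_{\varphi+s}^n,\]
using $\varepsilon^{n+r}=(\frac{n+r}{n})^n2A_s$. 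Since $\varphi+s\in\mathcal{E}(X,\theta)$, Proposition~\ref{prop: domination} with $c=1/2$ gives $\varphi+s\geq w$ on $X$, which is the reformulated inequality.

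\textbf{Main obstacle.} The delicate step is verifying the hypotheses of Lemma~\ref{lemma: ddl511} in the big setting. One must check that $v_s\leq V_\theta$ with full mass relative to the model potential $V_\theta$, and that $\gamma'\leq1$, which is what forces the specific value of $\Lambda$ (the source of the additive term $\frac{n}{n+r}(2A_s)^{1/r}$). One must also make sure the resulting $w$ is genuinely $\theta$-psh despite $V_\theta$ being singular. This is precisely where the envelope machinery replaces the smooth maximum-principle argument of \cite{GuoPhongTong23-estimates}, so this is the step I would check most carefully.
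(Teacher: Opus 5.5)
Your proof is correct, but it runs in the opposite direction from the paper's.

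The paper applies a convex transform to $\varphi$:
\begin{itemize}
\item it takes the envelope $u=P_\theta(\psi)$ of the obstacle $\psi=V_\theta-\chi(\mathbf 1_{\widetilde\Omega_s}(V_\theta-\varphi-s))$;
\item it uses that $\theta_u^n$ is carried by the contact set (Theorem~\ref{thm: envelope}), together with Lemmas~\ref{lemma: ddl511} and~\ref{lem: maxprin} on that set, to get the \emph{upper} bound $\MA_\theta(u)\le\frac12\MA_\theta(v_s)$ on $\{u<v_s\}$;
\item it concludes $v_s\le u\le\psi$ by domination.
\end{itemize}

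You instead apply a concave transform to $v_s$. Lemma~\ref{lemma: ddl511} directly gives the $\theta$-psh function $w=V_\theta-\varepsilon(V_\theta-v_s+\Lambda)^{\frac{n}{n+r}}$ with a \emph{lower} bound on $\theta_w^n$, and domination is applied to the pair $(\varphi+s,w)$. This is the pluripotential version of the Guo--Phong--Tong test function in \cite{GuoPhongTong23-estimates}, with Lemma~\ref{lemma: ddl511} plus Proposition~\ref{prop: domination} replacing the maximum principle.

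What your route buys: it avoids envelopes and contact sets entirely.
\begin{itemize}
\item You never need Theorem~\ref{thm: envelope}. Its quasi-continuity hypothesis is not obvious for the paper's obstacle, which jumps by $\Lambda$ across $\partial\widetilde\Omega_s$.
\item You never need $P_\theta(\psi)\in\mathcal{E}(X,\theta)$. The paper's remark that $u$ has minimal singularities really uses that $V_\theta-\varphi$ is bounded. That is assumed in Theorem~\ref{thm: priori-Lp} but is not part of the lemma's hypotheses.
\item Your argument needs only $\varphi\in\mathcal{E}(X,\theta)$, and it goes through verbatim under the weaker hypothesis $\MA_\theta(\varphi)\le f\,dV$.
\end{itemize}

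What the paper's route buys: it keeps the argument inside the envelope formalism used throughout (Propositions~\ref{prop: domination} and~\ref{prop: moreestimate}), which is the combination the paper sets out to showcase.

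Finally, the step you flag as the main obstacle is immediate: $\sup_X v_s=-1$ gives $v_s\le V_\theta$, and $v_s\in\mathcal{E}(X,\theta)$ by construction. Your separate handling of $A_s=0$ covers a case the paper leaves implicit.
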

\begin{proof}

    Set $$\varepsilon=\left(\frac{n+r}{n}\right)^{\frac{n}{n+r}}(2A_s)^{\frac{1}{n+r}},\quad\Lambda=\frac{n}{n+r}(2A_s)^{\frac{1}{r}},\quad C_\Lambda=\varepsilon\Lambda^{\frac{n}{n+r}}$$ so that $\frac{\varepsilon n}{n+r}\Lambda^{-\frac{r}{n+r}}=1$. 
    If we set $\chi(t)=\varepsilon^{-\frac{n+r}{n}}t^{\frac{n+r}{n}}-\Lambda$, then $\chi$ is a convex increasing function on $(C_\Lambda,+\infty)$. 
    Let $\gamma$ be the inverse function of $\chi$, which is concave and increasing. By computations, we have
    \[ \gamma(t)=\varepsilon(t+\Lambda)^{\frac{n}{n+r}}\] and $\gamma'(0)=1$ so that $\gamma'\leq 1$.

Set $\widetilde\Omega_s:=\{C_\Lambda<V_\theta-\varphi-s\}$.
We see that $\widetilde\Omega_s\subset\Omega_s$. We set $$\psi:= V_\theta-\chi(\mathbf{1}_{\widetilde\Omega_s}(V_\theta-\varphi-s)) \; \text{and}\; u:=P_\theta(\psi).$$ We observe that $u$ has minimal singularities, so $u\in\mathcal{E}(X,\theta)$. We denote by $\mathcal{C}=\{u=\psi\}$ the contact set. We observe that 
   \[v:=-\gamma(V_\theta-u)+V_\theta\leq-\gamma(V_\theta-\psi)+V_\theta= -\mathbf{1}_{\widetilde\Omega_s}(V_\theta-\varphi-s)+V_\theta\leq \varphi+s, \]
    with equality on the contact set $\mathcal{C}=\{u=\psi\}$.
Since $v_s\leq V_\theta$ we have
\[\{u<v_s\}\cap\mathcal{C}\subset \{C_\Lambda<V_\theta-\varphi-s\}\cap\mathcal{C}. \]  
    By Lemmas~\ref{lemma: ddl511} and~\ref{lem: maxprin}, on the set $\{u<v_s\}$ we have
    \begin{equation}
        \begin{split}
        (\chi'(\mathbf{1}_{\Omega_s}(V_\theta-\varphi-s)))^{-n}\theta_u^n&=    \mathbf{1}_{\mathcal{C}}(\chi'(V_\theta-\varphi-s))^{-n}\theta_u^n\\
        &= \mathbf{1}_{\mathcal{C}}(\gamma'(V_\theta-u))^n\theta_u^n\\
        &\leq  \mathbf{1}_{\mathcal{C}}\theta_v^n\leq \mathbf{1}_{\mathcal{C}}\theta_\varphi^n,
        \end{split}
    \end{equation} since we also have $\mathcal{C}=\{v=\varphi+s\}$. Thus, on $\{u<v_s\}$,
    \begin{align*}
        \MA_\theta(u)&\leq (2A_s)^{-1}\mathbf{1}_{\widetilde\Omega_s}(V_\theta-\varphi-s)^{r}\MA_\theta(\varphi)\\
        &\leq (2A_s)^{-1}\mathbf{1}_{\Omega_s}(V_\theta-\varphi-s)^{r}fdV \\
        &= \frac{1}{2}\MA_\theta(v_s).
    \end{align*}
 We apply the domination principle (Proposition~\ref{prop: domination}) to obtain
    $v_s\leq u $, so $v_s\leq \psi$. This means that
    \[v_s\leq V_\theta-  \frac{n}{n+r} (2A_s)^{-\frac{1}{n}}\mathbf{1}_{\widetilde\Omega_s}(V_\theta-\varphi-s)^{\frac{n+r}{n}}+\left(\frac{n}{n+r}\right)(2A_s)^{\frac{1}{r}}.\]
On the other hand, on $\Omega_s\setminus\widetilde\Omega_s$, we have $0<V_\theta-\varphi-s\leq C_\Lambda=(2A_s)^{\frac{1}{r}}$, so
\[\frac{n}{n+r} (2A_s)^{-\frac{1}{n}}\mathbf{1}_{\Omega_s\setminus\widetilde\Omega_s}(V_\theta-\varphi-s)^{\frac{n+r}{n}}\leq \frac{n}{n+r}(2A_s)^{\frac{1}{r}}. \]
Since  $-V_\theta\leq -v_s$, we obtain the inequality~\eqref{eq: key-equation}. 
Since on $X\setminus\Omega_s$, the inequality is trivial and the proof is complete.
\end{proof}

\subsubsection{Density $L^{1+\varepsilon}$}

We first prove Theorem~\ref{thm: main} in the case when $w(t)=t^p$, for some $p>1$, i.e., $f\in L^{p}(X,dV)$. 
\begin{theorem}\label{thm: priori-Lp}
    Let $\varphi$ be a $\theta$-psh function with minimal singularities. Moreover, assume that $\sup_X\varphi=0$ and $\MA_\theta(\varphi)\leq fdV$ for $0\leq f\in L^p(X,dV)$, with $p>1$ and $f>0$ almost everywhere. Then there exists a constant $C$ only depending on $dV$, $A$, $\omega_X$, $n$, $p$ and $\|f\|_{L^p}$ 
    such that \[ \varphi\geq V_\theta-C.\]
\end{theorem}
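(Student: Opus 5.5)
The plan is to run the Guo--Phong--Tong De Giorgi iteration, using Lemma~\ref{lem: key} as the comparison inequality and Theorem~\ref{thm: Skoda/Tian} for exponential integrability. I will measure sublevel sets with $dV$ throughout, which avoids comparing $\mu$ and $dV$.

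\textbf{Step 0: Lemma~\ref{lem: key} under the weaker hypothesis.} Here we only assume $\MA_\theta(\varphi)\le f\,dV$, not equality. The proof of Lemma~\ref{lem: key} used the equation only through the inequality $\MA_\theta(\varphi)\le f\,dV$ on $\{u<v_s\}$, so \eqref{eq: key-equation} still holds. Since $\varphi$ has minimal singularities, $V_\theta-\varphi$ is bounded. Hence $A_s<\infty$, and $u=P_\theta(\psi)$ has minimal singularities.

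\textbf{Step 1: uniform bound on $A_s$.} By H\"older with $1/p+1/q=1$,
\[A_s\le A_0\le \|f\|_p\,\|V_\theta-\varphi\|_{L^{rq}(dV)}^r .\]
Since $V_\theta\le 0$ and $\varphi\in\PSH(X,A\omega_X)$ with $\sup_X\varphi=0$, we have $V_\theta-\varphi\le-\varphi$. Theorem~\ref{thm: Skoda/Tian} then gives $A_s\le C$ uniformly in $s$.

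\textbf{Step 2: exponential integrability on sublevel sets.} The function $v_s$ satisfies $\sup_X v_s=-1$ and lies in $\PSH(X,A\omega_X)$. Also $V_\theta-v_s\le -v_s$, so Theorem~\ref{thm: Skoda/Tian} gives $\int_X e^{\alpha(V_\theta-v_s)}dV\le C$ with $\alpha$ independent of $s$. Multiplying \eqref{eq: key-equation} by $\alpha$, exponentiating, and using $A_s\le C$ from Step 1 yields
\[\int_{\Omega_s}\exp\Big(c\,A_s^{-1/n}(V_\theta-\varphi-s)^{\frac{n+r}{n}}\Big)dV\le C.\]
Two consequences follow.

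First, the exponential bound implies $\int_{\Omega_s}(V_\theta-\varphi-s)^{m}dV\le C_m A_s^{m/(n+r)}$ for every $m>0$. Combining this with H\"older twice ($f\in L^p$, then Cauchy--Schwarz against $\mathbf 1_{\Omega_s}$) gives
\[A_s\le \|f\|_p\Big(\int_{\Omega_s}(V_\theta-\varphi-s)^{rq}dV\Big)^{1/q}\le C A_s^{\frac{r}{n+r}}\,|\Omega_s|^{\frac1{2q}}.\]
Therefore $A_s\le C|\Omega_s|^{\frac{n+r}{2nq}}$, where $|\cdot|$ denotes $dV$-measure.

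Second, on $\Omega_{s+t}$ we have $V_\theta-\varphi-s\ge t$. Chebyshev then gives
\[|\Omega_{s+t}|\le C\exp\!\big(-c\,t^{\frac{n+r}{n}}A_s^{-1/n}\big)\le C_K\,t^{-K\frac{n+r}{n}}A_s^{K/n}\]
for any $K>0$.

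\textbf{Step 3: decay inequality and conclusion.} Putting the two consequences together,
\[t^{K\frac{n+r}{n}}\,|\Omega_{s+t}|\le C\,|\Omega_s|^{\frac{K(n+r)}{2n^2q}} .\]
Fix $r$ (say $r=1$) and choose $K$ large so that $\frac{K(n+r)}{2n^2q}=1+\delta_0$ with $\delta_0>0$. This gives $t^{a}\phi(s+t)\le C\phi(s)^{1+\delta_0}$ for $\phi(s)=|\Omega_s|$ and some $a>0$. The standard De Giorgi lemma then shows $\phi(s_\infty)=0$ for some $s_\infty\le s_0+C'$. Here $s_0$ is chosen, via Theorem~\ref{thm: Skoda/Tian}, so that $\phi(s_0)$ is small enough to start the iteration.

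Since $\Omega_{s_\infty}$ is plurifine open with zero Lebesgue measure, it is empty. Hence $\varphi\ge V_\theta-C$. The constant depends only on $n,p,A,\omega_X,dV$ and $\|f\|_p$, because $\alpha$ and the Skoda constants depend only on $A\omega_X$.

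\textbf{Main obstacle.} I expect the main difficulty to be the bookkeeping of exponents in Steps 2 and 3, namely getting the power of $|\Omega_s|$ strictly above $1$ with the $t$-power positive. The freedom in $K$ (from exponential rather than polynomial decay) is what should make this work. A lesser point is checking that Lemma~\ref{lem: key} survives with only the inequality $\MA_\theta(\varphi)\le f\,dV$ in place of the equation.
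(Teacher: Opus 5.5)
Your proof is correct. It shares the paper's overall scheme, Lemma~\ref{lem: key} followed by a De Giorgi iteration, but derives the decay inequality differently.

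\textbf{The paper's route.} The paper fixes $r=1/p^*$ and measures sublevel sets with $\mu$, via $\phi(s)=\int_{\Omega_s}f\,dV$. It integrates the key inequality against $f\,dV$ on $\Omega_s$ and applies H\"older with a large exponent $q>np^*+1$, using only polynomial $L^m$ bounds on $v_s$. This gives $A_s\le C_0\,\phi(s)^{(1-1/q)(n+r)/n}$ in one step. Combined with $A_s\ge t\,\phi(s+t)$, this closes the iteration. The paper finishes with $\theta_\varphi^n(\Omega_{S_\infty})=0$ and the domination principle.

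\textbf{Your route.} You exponentiate the key inequality using Skoda's uniform integrability of $v_s$. This gives an exponential tail bound for $A_s^{-1/n}(V_\theta-\varphi-s)^{(n+r)/n}$ on $\Omega_s$ with respect to Lebesgue measure. From that single bound you extract two things:
\begin{itemize}
\item moment bounds, which yield $A_s\lesssim|\Omega_s|^{(n+r)/(2nq)}$;
\item a Chebyshev bound with arbitrary polynomial decay $K$.
\end{itemize}
As a result the exponent bookkeeping is trivial and $r$ can be any positive number.

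\textbf{Comparison.} The paper's version is shorter and needs only polynomial integrability of $v_s$. Yours stays closer to the original Guo--Phong--Tong argument and to the Moser--Trudinger estimate of Section~\ref{sect: inequality}, and it avoids comparing $\mu$ with $dV$.

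\textbf{Minor points to tighten.}
\begin{itemize}
\item The De Giorgi lemma the paper cites is stated for $t\,\phi(s+t)\le B\,\phi(s)^{1+\delta}$ with $t\in[0,1]$. Your inequality $t^a\phi(s+t)\le C\phi(s)^{1+\delta_0}$ reduces to it by applying the lemma to $\phi^{1/a}$, since $t\,\phi^{1/a}(s+t)\le C^{1/a}\bigl(\phi^{1/a}(s)\bigr)^{1+\delta_0}$.
\item For the final step, the cleanest justification is that $\varphi\ge V_\theta-s_\infty$ Lebesgue-a.e.\ implies the inequality everywhere for quasi-psh functions, by the sub-mean-value property. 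Alternatively, $\MA_\theta(\varphi)(\Omega_{s_\infty})\le\int_{\Omega_{s_\infty}}f\,dV=0$ together with the domination principle, as in the paper.
\item If $A_s=0$ for some $s$, then $v_s$ is undefined. This case is harmless: $f>0$ a.e.\ gives $|\Omega_s|=0$ directly.
\end{itemize}
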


\begin{proof} Assume that $\theta\leq A\omega_X$ for $A>1$. Let $p^*$ be the conjugate exponent of $p$, i.e., $\frac{1}{p}+\frac{1}{p^*}=1$. We apply Lemma~\ref{lem: key} with $r=\frac{1}{p^*}$.
By the H\"older inequality, we have
\begin{equation*}
    A_s\leq \int_X(V_\theta-\varphi)_+^{{1}/{p^*}}fdV\leq \|f\|_{p}(\|V_\theta\|_{1}+\|\varphi\|_{1})^{1/p^*}.
\end{equation*} 
Since $\varphi$ and $V_\theta$ both belong to the compact set of $A\omega_X$-psh functions normalized by $\sup_X u=0$, their $L^{1}(dV)$ norms are bounded by an absolute constant only depending on $dV$, $\theta$, $A,\omega_X$, by Theorem~\ref{thm: Skoda/Tian}. Hence $\Lambda{=\frac{n}{n+r}(2A_s)^{\frac{1}{r}}}\leq C\|f\|_p^{p^*
}$. 



Define $\phi(s):=\int_{\Omega_s}fdV$. Fix $q>1$ to be chosen later. 
 Lemma~\ref{lem: key} (with $r=\frac{1}{p^*}$) and H\"older's inequality yield
\begin{align*}
    A_s&\leq \int_{\Omega_s}(2A_s)^{\frac{r}{n+r}}(V_\theta-v_s+\Lambda)^{\frac{n}{n+r}}fdV\\
    &\leq 2^{\frac{r}{n+r}}A_s^{\frac{r}{n+r}}\left(\int_{\Omega_s}(V_\theta-v_s+\Lambda)^{\frac{nq}{n+r}}fdV \right)^{1/q}\left(\int_{\Omega_s}fdV \right)^{1-1/q}\\
    &\leq 2^{\frac{r}{n+r}}A_s^{\frac{r}{n+r}}\|f\|_{p}^{1/q}\|(V_\theta-v_s+\Lambda)\|_{\frac{nqp^*}{n+r}}^{1/q}\phi(s)^{1-\frac{1}{q}}\\
    &\leq 2^{\frac{r}{n+r}}A_s^{\frac{r}{n+r}}\|f\|_{p}^{1/q} (\|v_s\|_{\frac{nqp^*}{n+r}}+\Lambda)^{\frac{n}{n+r}}\phi(s)^{1-\frac{1}{q}}\\
     &\leq C(n,p,q,\|f\|_p)A_s^{\frac{r}{n+r}}\phi(s)^{1-\frac{1}{q}}.
\end{align*} Hence $A_s\leq C_0\phi(s)^{(1-\frac{1}{q})\frac{n+r}{n}}$.
On the other hand, for any $t\in [0,1]$, we have
\[A_s =\int_{\Omega_s}(V_\theta-\varphi-s)_+fdV\geq t\phi(s+t).\] It follows that 
\[t\phi(s+t)\leq C_0\phi(s)^{1+\delta_0},\]
where we choose $q>\frac{n}{r}+1=np^*+1$ so that $\delta_0=(1-\frac{1}{q})\frac{n+r}{n}-1\in(0,\frac{1}{np^*})$. 
We observe that 
\[ \phi(s)\leq \frac{1}{s}\int_{\Omega_s}(V_\theta-\varphi)^{\frac{1}{p^*}}fdV\leq \frac{C_1}{s}\xrightarrow{s\to\infty}0\] so there exists $s_0>0$ such that $\phi(s_0)^{\delta_0}<\frac{1}{2C_0}$.
Since $\phi$ is non-increasing and right-continuous we can apply De Giorgi's iteration lemma (see, e.g., ~\cite[Lemma 2.4]{eyssidieux2009singular}) to obtain that $\phi(s)=0$ for all $s\geq S_\infty=s_0+\frac{2C_0\phi(s_0)^{\delta_0}}{1-2^{-\delta_0}}$. 

Therefore, we infer that $\theta_\varphi^n(\varphi< V_\theta-S_\infty)=0$. It follows from the domination principle (Proposition~\ref{prop: domination}) that $\varphi\geq V_\theta-S_\infty$ hold everywhere.
\end{proof}
\begin{remark}
    We can follow the same arguments as in ~\cite[pages 400-403]{GuoPhongTong23-estimates} (with $r=1$ in Lemma~\ref{lem: key}) to get the sightly generalized result when the density $f$ is just assumed to have the $L^1(\log L)^p$- bound for $p>n$. In the aforementioned paper, instead, we use the inequality H\"older-Young, and $A_s$ is bounded from above by the energy of $\varphi$. And, the energy can also be controlled in the case of complex Monge--Amp\`ere equations.
\end{remark}

We can provide the proof of Theorem~\ref{thm: main} when the density belongs to $L^{1+\varepsilon}$. It was treated in ~\cite{boucksom2010monge,darvas2018monotonicity}.
\begin{theorem}\label{thm: main-Lp}
      Assume that $\mu= fdV$ is a probability measure on $X$ and $f\in L^p(dV)$ for some $p>1$, and $f>0$ almost everywhere. Then there exists a unique solution $\varphi\in\mathcal{E}(X,\theta)$ to~\eqref{eq: cmae-big} with minimal singularities. More precisely, there is a constant $C$ depending on $dV$, $n$, $A$, $\omega_X$, $p$ and $\|f\|_p$ such that 
    \[ \varphi\geq V_\theta-C.\]
\end{theorem}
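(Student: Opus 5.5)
The plan is to get existence and uniqueness from the literature and the quantitative bound from Theorem~\ref{thm: priori-Lp}. The only real point is that Theorem~\ref{thm: priori-Lp} is stated for potentials already known to have minimal singularities. So we first need a solution with minimal singularities, and only then can we apply the a priori bound.

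\emph{Step 1: existence and uniqueness.} By \cite{boucksom2010monge,berman2013variational,darvas2021log}, there is a solution $\varphi\in\mathcal{E}(X,\theta)$ of \eqref{eq: cmae-big}. For uniqueness, let $\varphi_1,\varphi_2$ be two solutions. For $\lambda>0$ and $c\in\mathbb{R}$, compare $\varphi_1$ with $\varphi_2+c$ via the domination principle (Proposition~\ref{prop: domination}, with $c=0$ there). Since the measures are equal, the usual argument shows $\varphi_1-\varphi_2$ is constant, and the normalization $\sup_X\varphi=-1$ then gives $\varphi_1=\varphi_2$. Alternatively, one can simply cite Dinew's uniqueness \cite{dinew2009uniqueness}.

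\emph{Step 2: approximation by bounded-density problems.} For $j\in\mathbb{N}$ set
\[
f_j:=c_j\min(f,j)+\tfrac1j,
\]
where $c_j$ is chosen so that $f_j\,dV$ is a probability measure, with $c_j\to1$. Then $f_j>0$, $f_j\in L^\infty$, and $\|f_j\|_p\le 2\|f\|_p+1$ uniformly in $j$. For bounded densities, a solution $\varphi_j$ with minimal singularities is available: one can quote \cite[Theorem 4.1]{boucksom2010monge}, or produce it by a variational or continuity argument. Theorem~\ref{thm: priori-Lp} applied to $\varphi_j+1$ then gives
\[
\varphi_j\ge V_\theta-C,
\]
with $C$ depending only on $dV,n,A,\omega_X,p,\|f\|_p$, and in particular independent of $j$.

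\emph{Step 3: passage to the limit.} The family $\{\varphi_j\}$ has $\sup_X\varphi_j=-1$, so by compactness a subsequence converges in $L^1$ to some $\psi\in\PSH(X,\theta)$. We have $\sup_X\psi=-1$ by Hartogs' lemma, and $\psi\ge V_\theta-C$ almost everywhere, hence everywhere since both sides are quasi-psh. So $\psi$ has minimal singularities and lies in $\mathcal{E}(X,\theta)$.

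To identify $\psi$ as the solution, use that $f_j\to f$ in $L^1$. For potentials uniformly squeezed between $V_\theta-C$ and $V_\theta$, the non-pluripolar Monge--Amp\`ere operator is continuous along sequences converging in capacity. To get convergence in capacity, take the decreasing envelopes $\tilde\varphi_k:=(\sup_{j\ge k}\varphi_j)^*$. These decrease to $\psi$, and the measures satisfy $\MA_\theta(\tilde\varphi_k)\ge\inf_{j\ge k}f_j\,dV$ by Lemma~\ref{lem: maxprin} (applied to finite maxima, then to the increasing limit). Letting $k\to\infty$ gives $\MA_\theta(\psi)\ge f\,dV$. Both sides are probability measures, so $\MA_\theta(\psi)=f\,dV$. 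By uniqueness, $\psi=\varphi$, and therefore $\varphi\ge V_\theta-C$.

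The main obstacle is Step 3: justifying the convergence of the Monge--Amp\`ere measures for the non-pluripolar product in a big class. The uniform minimal-singularity bound is exactly what makes the Bedford--Taylor-type continuity (monotone convergence relative to $V_\theta$) available. If that becomes delicate, the fallback is the simpler route of applying Theorem~\ref{thm: priori-Lp} directly to $\varphi$, once \cite[Theorem 4.1]{boucksom2010monge} guarantees a priori that $\varphi$ has minimal singularities.
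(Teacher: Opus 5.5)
Your Steps~1 and~3 are fine. In particular, the envelopes $(\sup_{j\ge k}\varphi_j)^*$ together with Lemma~\ref{lem: maxprin} do give $\MA_\theta(\psi)\ge f\,dV$, because every $\varphi_j$ lies between $V_\theta-C$ and $V_\theta$. The real obstacle is Step~2, not Step~3: Step~2 assumes the qualitative content of the statement, namely minimal singularities, for the approximants.

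The claim that, for bounded densities, a solution with minimal singularities ``is available'' is not elementary in a big class. The variational method of \cite{berman2013variational} only yields a solution in $\mathcal{E}^1(X,\theta)$. A continuity method would itself need an a priori estimate of exactly the type under discussion, since $\theta$ is not semipositive and $V_\theta$ is in general unbounded. The one genuine source you give, \cite[Theorem~4.1]{boucksom2010monge}, is a Ko\l odziej-type capacity argument that already applies to $f\in L^p$, $p>1$, directly. Once you cite it, Steps~2--3 are redundant and your argument reduces to the fallback: minimal singularities of $\varphi$ by citation, then the constant from Theorem~\ref{thm: priori-Lp}. That is a valid proof by citation, but the approximation by bounded densities contributes nothing. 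The minimal-singularity property is imported rather than proved, and that is precisely what the paper establishes here without capacities. Separately, Proposition~\ref{prop: domination} with $c=0$ does not by itself give uniqueness for equal measures; that is Dinew's theorem \cite{dinew2009uniqueness}, so just cite it.

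The missing idea is to build approximants that have minimal singularities by construction, so that Theorem~\ref{thm: priori-Lp} applies to them. Using Theorem~\ref{thm: Skoda/Tian}, fix $\varepsilon>0$ and $1<q<p$ such that $e^{-\varepsilon u}f$ is uniformly bounded in $L^q$ for all normalized $u\in\PSH(X,A\omega_X)$. Then solve, in $\mathcal{E}(X,\theta)$ (which needs no a priori bound), the twisted equations
\[
\MA_\theta(\varphi_j)=\mathbf{1}_{\{\varphi>V_\theta-j\}}\,e^{\varepsilon(\varphi_j-\max(\varphi,V_\theta-j))}f\,dV .
\]
The argument then runs as follows.
\begin{enumerate}
\item By plurifine locality (Lemma~\ref{lem: plurifine}), $\max(\varphi,V_\theta-j)$ is a subsolution. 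Proposition~\ref{prop: unique} then gives $\varphi_j\ge\max(\varphi,V_\theta-j)\ge V_\theta-j$, and the same proposition shows that $\varphi_j$ decreases in $j$.
\item The densities are uniformly bounded in $L^q$, so Theorem~\ref{thm: priori-Lp} gives $\varphi_j\ge V_\theta-C$ with $C$ independent of $j$.
\item The decreasing limit $\psi$ satisfies $\MA_\theta(\psi)=e^{\varepsilon(\psi-\varphi)}f\,dV$, and Proposition~\ref{prop: unique} forces $\psi=\varphi$.
\end{enumerate}
The exponential twist is what makes comparison with a subsolution of minimal singularities possible. Your untwisted, normalized equations with densities $f_j$ have both sides of total mass one, so there is no comparison of this kind to certify that $\varphi_j$ has minimal singularities.
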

\begin{proof}
    The arguments follow from~\cite[p. 1099]{darvas2025relative}.
     In~\cite[Theorem A]{boucksom2010monge} (see also~\cite[Theorem 4.7]{darvas2021log}) it is shown that there exists $\varphi\in\mathcal{E}(X,\theta)$ such that $\MA_\theta(\varphi)=\mu$. It suffices to prove that $\varphi$ has minimal singularities, i.e., that $\varphi-V_\theta$ is bounded. 

    By Skoda's uniform
integrability theorem (Theorem~\ref{thm: Skoda/Tian}), we can choose $\varepsilon>0$ and $1<q<p$ such that $e^{-\varepsilon u}f\in L^q$ for all $u\in\PSH(X,A\omega_X)$, and $\sup_X u=0$. For each $j\in\mathbb N$ we solve
    \[\varphi_j\in\mathcal{E}(X,\theta),\quad \MA_\theta(\varphi_j)=\mathbf{1}_{\{\varphi>V_\theta-j\}}e^{\varepsilon(\varphi_j-\max(\varphi,V_\theta-j))}fdV. \]
We see that for each $j$, $\max(\varphi,V_\theta-j)$ is a subsolution of the above equation, because, by the plurifine property (Lemma~\ref{lem: plurifine})
\begin{align*}
    \MA_\theta({\max(\varphi,V_\theta-j)})&\geq\mathbf{1}_{\{\varphi>V_\theta-j\}}\MA_\theta({\max(\varphi,V_\theta-j)})\\
    &= \mathbf{1}_{\{\varphi>V_\theta-j\}}\MA_\theta(\varphi)\\
    &=\mathbf{1}_{\{\varphi>V_\theta-j\}} e^{\varepsilon(\max(\varphi,V_\theta-j)-\max(\varphi,V_\theta-j))} fdV.
\end{align*}
Hence, $\varphi_j\geq \max(\varphi,V_\theta-j)$ by Proposition~\ref{prop: unique}, i.e., $\varphi_j-V_\theta$ is bounded. 
Again, we have $\varphi_j\geq \varphi_k$ for $j<k$. We see that the densities 
{\[f_j\leq e^{\varepsilon\sup_X\varphi_1}e^{-\varepsilon\max(\varphi, V_\theta-j)}f\]}are uniformly bounded in $L^q(dV)$. By uniform a priori estimate (Theorem~\ref{thm: priori-Lp}), we have 
    $\varphi_j-V_\theta\geq C$ for $C>0$ only depending on $A$, $n$, $\omega_X$, $dV$, $p$ and $\|f\|_p$. Thus, $\psi:=\lim_j\varphi_j\geq V_\theta-C$ and $\MA_\theta(\psi)=e^{\varepsilon(\psi-\varphi)}fdV$, so $\varphi=\psi$ by Proposition~\ref{prop: unique}.
\end{proof}

\subsubsection{General case} We establish the following estimate. 
\begin{proposition}\label{prop: moreestimate} Let $\psi\in\PSH(X,\theta)$ such that $\sup_X\psi=0$, $a\in (0,1)$ and let $0\leq  f\in L^p(dV)$ for some $p>1$.
    Let $\varphi\in\mathcal{E}(X,\theta)$ be such that $\sup_X\varphi=0$ and 
    \[\theta_\varphi^n\leq a^n\theta_\psi^n +fdV.\]
    Then there exists a constant $C>0$ depending on $dV$, $A$, $\omega_X$, $n$, $p$, $a$ and $\|f\|_p$ such that 
    \begin{equation*}
        \varphi\geq a\psi+(1-a)V_\theta-C.
    \end{equation*}
\end{proposition}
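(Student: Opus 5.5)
We will follow the strategy of Guedj--Lu (and Liu's relative version): compare $\varphi$ with the barycenter $a\psi+(1-a)u$, where $u$ is a bounded-from-below solution of an auxiliary equation with $L^p$ density. The density will be chosen to absorb the term $f\,dV$. The comparison itself will come from the domination principle (Proposition~\ref{prop: domination}).

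\textbf{Step 1: the auxiliary potential.} Since the mass of $f\,dV$ need not be controlled relative to $(1-a)^n\Vol(\theta)$, we first normalize. Set $\nu:=\frac{f\,dV+dV}{\int_X (f+1)dV}$. This is a probability measure with $L^p$ density, bounded below by a positive constant, and its $L^p$ norm is controlled by $\|f\|_p$. By Theorem~\ref{thm: main-Lp}, there is $u\in\mathcal{E}(X,\theta)$ with $\MA_\theta(u)=\nu$, $\sup_X u=-1$, and $u\ge V_\theta-C_1$ with $C_1=C_1(dV,A,\omega_X,n,p,\|f\|_p)$. Writing $m:=\int_X(f+1)dV\le 1+\|f\|_p\,(\int dV)^{1/p^*}$, we get
\[\theta_u^n=\frac{\Vol(\theta)}{m}(f+1)\,dV\ \ge\ \frac{\Vol(\theta)}{m}\,f\,dV.\]
Hence $f\,dV\le \kappa\,\theta_u^n$ with $\kappa:=m/\Vol(\theta)$, which depends only on the allowed data (and on $\theta$).

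\textbf{Step 2: the comparison function.} Let $\lambda>0$ be a constant to be fixed, and set $w:=a\psi+(1-a)u-\lambda\in\PSH(X,\theta)$. By multilinearity of non-pluripolar products and positivity of the mixed terms,
\[\theta_w^n\ \ge\ a^n\theta_\psi^n+(1-a)^n\theta_u^n.\]
We want to show $\theta_\varphi^n\le c\,\theta_w^n$ on $\{\varphi<w\}$ for some $c<1$. The hypothesis gives $\theta_\varphi^n\le a^n\theta_\psi^n+\kappa\theta_u^n$, and this is not directly smaller than $c(a^n\theta_\psi^n+(1-a)^n\theta_u^n)$, because $\kappa$ may exceed $(1-a)^n$ and the coefficient $a^n$ is not multiplied by $c<1$.

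This is the main obstacle. We plan to handle it with an exponential twist, as in Proposition~\ref{prop: unique} and the proof of Theorem~\ref{thm: main-Lp}. Instead of the equation in Step 1, solve $\theta_u^n=e^{\beta(u-\varphi)}\,g\,dV$ with $g$ a suitable multiple of $f+1$ and $\beta>0$ large. The $L^q$ bound of $e^{-\beta\varphi}f$ from Skoda (Theorem~\ref{thm: Skoda/Tian}), with $\beta$ small relative to $\alpha$, together with Theorem~\ref{thm: priori-Lp}, still gives $u\ge V_\theta-C_1$.

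Alternatively, and more simply, we use a slight loss in $a$. Rather than domination with $c<1$, apply the $c=0$ style argument to the set $\{\varphi<w\}$ using the envelope $P_\theta(b\varphi-(b-1)w)$, as in the proof of Proposition~\ref{prop: domination}. Following Guedj--Lu, we first check the inequality
\[\mathbf{1}_{\{\varphi<w\}}\theta_\varphi^n\le \mathbf{1}_{\{\varphi<w\}}\big(a^n\theta_\psi^n+\tfrac{(1-a)^n}{2}\theta_u^n\big),\]
after scaling $u$'s density so that $\kappa\le (1-a)^n/2$. This is possible once the masses are adjusted, because only the part of $f$ on the sublevel set matters. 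To arrange it, we apply the construction to $f\mathbf{1}_{\{f>M\}}$, which has small $L^1$ mass but controlled $L^p$ norm. The bounded part $f\mathbf{1}_{\{f\le M\}}\le M$ is handled by the $(1-a)^n\theta_u^n$ term, choosing $u$'s density to be $\ge$ a constant.

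\textbf{Step 3: conclusion.} Run the envelope argument of Proposition~\ref{prop: domination} with $u_b=P_\theta(b\varphi-(b-1)w)$. On the contact set, the inequality from Step 2 forces $\theta_{u_b}^n$ to vanish on $\{\varphi<w\}$. The mass comparison then shows that $\{\varphi<w\}$ is pluripolar, hence empty. Therefore
\[\varphi\ \ge\ a\psi+(1-a)u-\lambda\ \ge\ a\psi+(1-a)V_\theta-(1-a)C_1-\lambda,\]
which is the claimed bound with $C=C_1+\lambda$. The normalizations $\sup_X\varphi=\sup_X\psi=0$ enter only through the Skoda-type $L^1$ bounds used in Step 1.
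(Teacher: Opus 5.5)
There is a genuine gap, and it is exactly the obstacle you flag in Step 2. With $w=a\psi+(1-a)u-\lambda$, the measure $\theta_\psi^n$ appears in the upper bound for $\theta_\varphi^n$ and in the lower bound for $\theta_w^n$ with the \emph{same} coefficient $a^n$. Neither workaround removes this. The exponential twist only concerns the comparison of $f\,dV$ with $\theta_u^n$. For the envelope $u_b=P_\theta(b\varphi-(b-1)w)$ with contact set $D_b$, grant your Step 2 inequality. Lemma~\ref{lem: maxprin} then gives on $D_b\cap\{\varphi<w\}$ only
\[
b^{-n}\theta_{u_b}^n\le a^n\bigl(1-(1-b^{-1})^n\bigr)\theta_\psi^n+(1-a)^n\bigl(\tfrac12-(1-b^{-1})^n\bigr)\theta_u^n .
\]
The coefficient $a^n(1-(1-b^{-1})^n)$ is positive for every $b>1$. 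This term cannot be absorbed by the second one, since $\theta_\psi^n$ may be singular with respect to $dV$. So $\theta_{u_b}^n$ is not forced to vanish on $\{\varphi<w\}$, and Step 3 breaks down.

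A strict gap is unavoidable in any domination argument, which is why you speak of ``a slight loss in $a$''. But with $w=\delta\psi+(1-\delta)u-\lambda$, $\delta>a$, you only get $\varphi\ge\delta\psi+(1-\delta)V_\theta-C_\delta$; this is the Guedj--Lu argument in the Remark after the proposition. Since $\psi\le V_\theta$, this is strictly weaker than the claim unless $\psi$ has minimal singularities. So your final display, with coefficient exactly $a$, is not justified.

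Your mass adjustment is also wrong. Comparing total masses in the hypothesis gives $\int_X f\,dV\ge(1-a^n)\Vol(\theta)>\tfrac{(1-a)^n}{2}\Vol(\theta)$. Hence $f\,dV\le\tfrac{(1-a)^n}{2}\theta_u^n$ is impossible globally. Truncating $f$ at level $M$ does not help, because the bounded part carries essentially all of this mass. On the unknown set $\{\varphi<w\}$ one needs the twist by $e^{-\varepsilon\varphi}$ together with $w\le-\lambda$, as in that Remark.

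The missing idea is to take the envelope against $\psi$ itself, with $b$ tuned to $a$. The paper sets $b=(1-a)^{-1}$ and $u_b=P_\theta(b\varphi-(b-1)\psi)$, so that $b^{-1}u_b+a\psi\le\varphi$ with equality on the contact set $\mathcal{C}$. Lemma~\ref{lem: maxprin} and the hypothesis then give $\mathbf{1}_{\mathcal{C}}(b^{-n}\theta_{u_b}^n+a^n\theta_\psi^n)\le\mathbf{1}_{\mathcal{C}}(a^n\theta_\psi^n+f\,dV)$. The $\psi$-terms cancel \emph{exactly}. Since $\theta_{u_b}^n$ is carried by $\mathcal{C}$ (Theorem~\ref{thm: envelope}), this yields $\theta_{u_b}^n\le b^n f\,dV$ on all of $X$. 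The remaining steps are:
\begin{enumerate}
\item check $u_b\in\mathcal{E}(X,\theta)$, since it dominates $P_\theta(b\varphi-(b-1)V_\theta)$;
\item bound $\sup_X u_b$ from below by integrating $|u_b|^{1/p^*}$ against $\theta_{u_b}^n$ over $\mathcal{C}$ and using H\"older;
\item apply the $L^p$ estimate to get $u_b\ge V_\theta-C$;
\item unwind $u_b\le b\varphi-(b-1)\psi$ to obtain $\varphi\ge a\psi+(1-a)V_\theta-C'$.
\end{enumerate}
No auxiliary bounded potential and no domination with a strict constant are involved, which is precisely what avoids any loss in $a$.
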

The proposition is a slight generalization of Ko\l odziej’s $L^\infty$ estimate, as shown in~\cite[Theorem 3.3]{darvas2021log} where the relative Monge-Amp\`ere capacity is applied. We provide an alternative proof, using the quasi-psh envelopes. The arguments are inspired by Lu and Nguyen~\cite[Lemma 5.3]{LuNguyen22-hessian}. 
\begin{proof} 
Fix $b=(1-a)^{-1}>1$.
It follows from~\cite[Theorem 3.3, Lemma 3.3]{darvas2025relative} that $P_\theta(b\varphi-(b-1)V_\theta)\in\mathcal{E}(X,\theta)$. Set $u_b:=P_\theta(b\varphi-(b-1)\psi)$. Since $$P_\theta(b\varphi-(b-1)\psi)\geq P_\theta(b\varphi-(b-1)V_\theta),$$ we have $u_b\in\mathcal{E}(X,\theta)$. 
Denote the contact set $\mathcal{C}=\{u_b=b\varphi-(b-1)\psi\}$. We observe that 
\[\varphi_b:=b^{-1}u_b+(1-b^{-1})\psi\leq \varphi \] with equality on $\mathcal{C}$. Applying the maximum principle (Lemma~\ref{lem: maxprin}) we have $\mathbf{1}_\mathcal{C}\theta^n_{\varphi_b}\leq \mathbf{1}_\mathcal{C}\theta^n_\varphi$. It follows from Theorem~\ref{thm: envelope} that $\theta_{u_b}^n$ is concentrated on the contact set $\mathcal{C}$. Therefore, we obtain
\begin{equation*}
 \mathbf{1}_\mathcal{C}(b^{-n}\theta^n_{u_b}+(1-b^{-1})^{n}\theta_\psi^n)\leq \mathbf{1}_\mathcal{C}\theta_{\varphi_b}^n\leq\mathbf{1}_{\mathcal{C}}\theta_\varphi^n,  
\end{equation*} this implies $\theta^n_{u_b}\leq b^nfdV$ by assumptions. Next, we want to bound $\sup_X u_b$. Theorem~\ref{thm: envelope} yields
\begin{align*}
    \Vol(\theta)(-\sup_X u_b)^{1/p^*}\leq\int_X |u_b|^{1/p^*}\theta_{u_b}^n&=\int_{\mathcal{C}}|b\varphi-(b-1)\psi|^{1/p^*}\theta_{u_b}^n\\
    &\leq \int_{\mathcal{C}}|b\varphi-(b-1)\psi|^{1/p^*}b^nfdV\\
    &\leq b^n \|f\|_p (b\|\varphi\|_1+b\|\psi\|_1)^{1/p^*},
\end{align*} as follows from
 H\"older's inequality. We see that the latter term is uniformly bounded since $\sup_X\varphi=\sup_X\psi=0$; cf. Theorem~\ref{thm: Skoda/Tian}. This shows that $\sup_Xu_b$ is uniformly bounded from below. We can apply Theorem~\ref{thm: main-Lp} to obtain $u_b\geq V_\theta-C$ for $C>0$ depending on $A$, $\omega_X$, $dV$, $n$, $p$ and $\|f\|_{p}$. Since $u_b\leq b\varphi-(b-1)\psi$ and $a=1-b^{-1}$ we have
\[\varphi\geq a\psi+(1-a)V_\theta-C(1-a)^{-1}.  \]
\end{proof}

\begin{remark} We can adapt the arguments in~\cite{GuedjLu25-estimate} to provide an alternative proof.
    By Skoda's uniform integrability theorem and H\"older's inequality, we can find $\varepsilon>0$ and $1<q<p$ such that $e^{-\varepsilon u}f\in L^q(dV)$ for all $u\in\PSH(X,\omega_X)$, $\sup_Xu=0$.
    Fix $\delta\in(0,1)$ such that $\lambda=a/\delta\in(0,1)$.
    Set $h=e^{-\varepsilon\varphi}f\in L^q(dV)$ for $1\in (1,p)$. Let $v\in\mathcal{E}(X,\theta)$ solve \[\MA_\theta(v)=\frac{1}{\Vol(\theta)}(\theta+\dc v)^n=\frac{hdV}{\int_XhdV},\quad\sup_Xv=0. \]
    By Step 1 in Theorem~\ref{thm: main}, $v\geq V_\theta-C$ where $C_0$ depends only on $\theta$, $dV$, $n$, $p$, $\varepsilon$ and $\|f\|_p$.  We set $u:=\delta\psi+(1-\delta)v-C$ for $C>0$ to be chosen hereafter. We observe that
    \[\theta_u^n\geq \delta^n\theta_\psi^n +(1-\delta)^n\theta_v^n.\]
    Since $\int_XhdV\leq \|h\|_q\leq C_1\|f\|_p$ and $\psi,v\leq 0$, we have
    \[\MA_\theta(v)\geq  \frac{1}{C_1\|f\|_p}e^{-\varepsilon\varphi}fdV\geq e^{\varepsilon C-\log[C_1\|f\|_p]}e^{-\varepsilon(\varphi-u)}fdV.\]
    If we choose $$C=\varepsilon^{-1}\log[C_1\|f\|_p\Vol(\theta)^{-1} a^{-n}\delta^{n}(1-\delta)^{-n}],$$ then we obtain
     \begin{align*}
    \lambda^n\theta_u^n&\geq  \lambda^n\delta^n\theta_\psi^n+\lambda^n(1-\delta)^n  \Vol(\theta)  e^{\varepsilon C-\log[C_1\|f\|_p]}e^{-\varepsilon(\varphi-u)}fdV\\
    &=a^n\theta_\psi^n+e^{-\varepsilon(\varphi-u)}fdV.
    \end{align*}
   Thus, on the set $\{\varphi<u\}$ we have $\lambda^n\theta_u^n\geq \theta_\varphi^n$. Since $\lambda\in[0,1)$, by the domination principle, we have $\varphi\geq u$. Therefore, for any $\delta\in (a,1)$,
   \[\varphi\geq \delta\psi+(1-\delta)V_\theta-C. \]
\end{remark}

In order to prove Theorem~\ref{thm: main} in the general case, we follow the trick in~\cite[Theorem 2.1]{GuedjLu25-estimate} and in~\cite[Theorem 3.1]{liu2024relative} to reduce to the case of densities in $L^{1+\varepsilon}$. 
\begin{lemma} \label{lem: reduceLp}
Assume that $\mu=fdV$ is a probability measure on $X$ and $f\in L^w(dV)$, where $w:[0+\infty)\to[0+\infty)$ is a convex increasing function. Let $\varphi\in\mathcal{E}(X,\theta)$ be a solution to~\eqref{eq: cmae-big}. Then, for any $a\in(0,1)$, $p>1$ there exist a function $\psi\in\PSH(X,\theta)$ and a density $g\in L^p(dV)$ such that
\[\MA_\theta(\varphi)\leq a^n\MA_\theta(\psi)+ gdV. \]
    
\end{lemma}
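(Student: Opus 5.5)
The plan is to split the density $f$ at a large height $M$. The bounded part will be the $L^p$ density $g$. The tail will be absorbed into $a^n\MA_\theta(\psi)$, where $\psi$ solves an auxiliary Monge--Amp\`ere equation whose right-hand side dominates the tail. Only $f\in L^1(dV)$ is really used, together with the existence theory in $\mathcal{E}(X,\theta)$ recalled at the beginning of this section. Convexity of $w$ and the Luxembourg norm matter only to make the choice of $M$ quantitative.

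\emph{Step 1: choose $M$.} Fix $a\in(0,1)$ and $p>1$, and write $V_0:=\int_X dV$. Since $\int_X f\,dV=1<+\infty$, dominated convergence gives $\int_{\{f>M\}} f\,dV\to 0$ as $M\to+\infty$. We fix $M$ so large that
\[
\int_{\{f>M\}} f\,dV\leq \frac{a^n}{2}.
\]
To make $M$ depend only on $w$ and $\|f\|_w$, we use that $w(t)/t$ is nondecreasing and tends to $+\infty$ for a Young function. Putting $\lambda=\|f\|_w$, this gives
\[
\int_{\{f>M\}} f\,dV\leq \frac{\lambda M/\lambda}{w(M/\lambda)}\int_X w(f/\lambda)\,dV\leq \frac{M}{w(M/\lambda)},
\]
and the right-hand side tends to $0$ as $M\to+\infty$.

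\emph{Step 2: construct $\psi$.} Consider the measure
\[
\nu:=a^{-n}\mathbf{1}_{\{f>M\}}f\,dV+\Big(1-a^{-n}\int_{\{f>M\}}f\,dV\Big)\frac{dV}{V_0}.
\]
By Step 1 the coefficient of $dV/V_0$ lies in $[\tfrac12,1]$, so $\nu$ is a positive probability measure. It is absolutely continuous with respect to Lebesgue measure, hence charges no pluripolar set. By \cite{boucksom2010monge,berman2013variational,darvas2021log} there is $\psi\in\mathcal{E}(X,\theta)$ with $\MA_\theta(\psi)=\nu$ and $\sup_X\psi=0$. Then
\[
a^n\MA_\theta(\psi)\geq \mathbf{1}_{\{f>M\}}f\,dV.
\]

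\emph{Step 3: conclude.} Set $g:=\mathbf{1}_{\{f\leq M\}}f$. Then $0\leq g\leq M$, so $\|g\|_p\leq M V_0^{1/p}$ and $g\in L^p(dV)$. Therefore
\[
\MA_\theta(\varphi)=f\,dV=\mathbf{1}_{\{f>M\}}f\,dV+g\,dV\leq a^n\MA_\theta(\psi)+g\,dV,
\]
which is the claimed inequality. If the lemma is later combined with Proposition~\ref{prop: moreestimate}, one needs $\|g\|_p$ controlled by $a$, $p$, $w$ and $\|f\|_w$. This holds because $M$ was chosen from $\|f\|_w$ in Step 1.

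There is no serious obstacle here. The only points needing care are the mass normalization of $\nu$, so that the equation is solvable in $\mathcal{E}(X,\theta)$ with total mass $1$, and the fact that $\nu$ charges no pluripolar set; both are handled above.
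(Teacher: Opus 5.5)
Your argument is correct for the lemma exactly as worded, but it takes a much cheaper route than the paper, and the difference matters.

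You cut $f$ at a fixed height $M$, keep $g=\mathbf{1}_{\{f\le M\}}f$, and absorb the tail by solving a new equation whose density dominates $a^{-n}\mathbf{1}_{\{f>M\}}f$. As you observe, this uses only $f\in L^1$.

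The paper instead solves $\MA_\theta(v)=c\,w(f)\,dV$ and sets $\psi:=V_\theta-\gamma(V_\theta-v)$, with $\gamma$ concave and $\gamma'\le 1$. Lemma~\ref{lemma: ddl511} then gives $\MA_\theta(\psi)\ge(\gamma'(V_\theta-v))^n c\,w(f)\,dV$. The paper splits $X$ along $\{\log f=-\alpha v+B\}$ and takes $g=e^{B}e^{-\alpha v}$, which lies in $L^p$ by Skoda. On the other piece it converts the extra integrability $w(f)/f=h(\log f)$ into the factor $a^{-n}$ through the choice $\gamma'(\alpha^{-1}(s-B))=a^{-1}c^{-1/n}h(s)^{-1/n}$.

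The payoff is that $\gamma(+\infty)=a^{-1}\alpha^{-1}c^{-1/n}\int_B^\infty h^{-1/n}(t)\,dt$ is finite exactly under condition (K). Hence $\psi\ge V_\theta-\gamma(+\infty)$ has minimal singularities, with an explicit bound. That property, and not merely the displayed inequality, is what the proof of Theorem~\ref{thm: main} uses. Proposition~\ref{prop: moreestimate} only gives $\varphi\ge\frac{\psi+V_\theta}{2}-C$, so one needs $\psi-V_\theta$ bounded to conclude. Likewise, Theorem~\ref{thm: Brezis-Merle} uses the explicit growth of $\gamma$ to get $\int_X e^{c(V_\theta-\varphi)^{n/(n-p)}}\omega_X^n\le C$.

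Your $\psi$ solves an equation of the same type as $\varphi$, with density only in $L^w$. A bound $\psi\ge V_\theta-C$ is therefore Theorem~\ref{thm: main} itself, and using your construction there would be circular; it also gives nothing toward the Brezis--Merle estimate.

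In short, your proof covers the statement as written. Your remark that downstream one only needs to control $\|g\|_p$ misses the lemma's real content, which is control of $\psi-V_\theta$. The stated version omits this, and your construction cannot supply it.
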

\begin{proof}It is shown in~\cite[Theorem A]{boucksom2010monge} that there exists $v\in\mathcal{E}(X,\theta)$ which solves
\begin{equation}
    \MA_\theta(v)=cw\circ f dV,\quad\sup_Xv=0\quad\text{where}\; c^{-1}=\int_Xw\circ f dV.
\end{equation}
Thanks to Tian's $\alpha$-invariant or Skoda's integrability theorem (Theorem~\ref{eq: alpha}), 
    we can choose $\alpha=\alpha(n,A\omega_X)/p$ so that $e^{-\alpha v}\in L^p$ for $p>1$. Fix $B>0$ large enough to be chosen later. 
   We let $\gamma:\mathbb{R}^+\rightarrow\mathbb{R}^+$ be a smooth concave increasing function such that $0\leq \gamma'\leq 1$. We set \begin{equation}\label{eq: psi}
       \psi:=V_\theta-\gamma(V_\theta-v),
   \end{equation} which is $\theta$-psh.
   It follows from Lemma~\ref{lemma: ddl511} that $$\theta^n_\psi\geq (\gamma'(V_\theta-v))^n\theta_v^n.$$
On $\{\log f<-\alpha v+B\}$, we have
\[\MA_\theta(\varphi)\leq e^B e^{-\alpha v}dV. \] On the other hand, on $\{\log f>-\alpha v+B\}\subset\{\log f>\alpha V_\theta-\alpha v+B\}$, 
\[\MA_\theta(\psi)\geq (\gamma'(V_\theta-v))^n\MA_\theta(v)\geq \left[\gamma'\left(\alpha^{-1}(\log f-B)\right)\right]^n cw(f)dV. \]
If we choose $\gamma$ so that for all $s>B$, then
\[c\left[\gamma'\left(\alpha^{-1}(s-B)\right)\right]^nh(s)= a^{-n}, \] so
\[\gamma(s)=\int_B^{\alpha s+B} a^{-1}\alpha^{-1}c^{-\frac{1}{n}} h^{-\frac{1}{n}}(t)dt.\]
Note that when $w$ satisfies \hyperlink{conditionK}{condition (K)}, i.e., $w(t)\sim_{+\infty} th(\log t)$ and $\int_1^\infty h^{-\frac{1}{n}}(t)dt<+\infty$, then $\gamma(s)<+\infty$. Thus, by definition, $|\psi-V_\theta|$ is bounded.

We can choose $B>0$ so that $\gamma'(0)\leq 1$, i.e., $h^{\frac{1}{n}}(B)\geq a^{-1}c^{-\frac{1}{n}}$ since $h(t)\to\infty$ as $t\to+\infty$.
We have on $\{\log f>-\alpha v+B\}$, 
{\[\MA_\theta(\psi)\geq a^{-n}fdV=a^{-n}\MA_\theta(\varphi).\]} 
Therefore, we have
\[\MA_\theta(\varphi)\leq a^{n}\MA_\theta(\psi)+ e^Be^{-\alpha v}dV. \]  
\end{proof}

Finally, we can prove Theorem~\ref{thm: main}.
\begin{proof}[Proof of Theorem~\ref{thm: main}]



As follows from Lemma~\ref{lem: reduceLp}, since $w$ satisfies  \hyperlink{conditionK}{condition (K)}, there exists $\psi\in\PSH(X,\theta)$ with minimal singularities and
$g\in L^2(dV)$ such that
\[ \MA_\theta(\varphi)\leq 2^{-n}\MA_\theta(\psi)+g dV.\]
As in Lemma~\ref{lem: reduceLp}, $\|g\|_2\leq C(a,\|f\|_w, C_\alpha)$.
Invoking Proposition~\ref{prop: moreestimate}, there exists a constant $C>0$ depending on $A$, $\omega_X$, $dV$, $n$, $a$, $\|e^{-\alpha v}\|_2$ and $\|f\|_w$ such that
\[ \varphi\geq\frac{\psi+V_\theta}{2}-C\geq V_\theta-C'.\]
\end{proof}

We end this section by stating a slight generalization of Theorem \ref{thm: main} and ~Proposition \ref{prop: moreestimate} as follows.
\begin{proposition} Let $\psi\in\PSH(X,\theta)$ such that $\sup_X\psi=0$, $a\in [0,1)$. Let $0\leq  f\in L^w(dV)$, where $w$ satisfies \hyperlink{conditionK}{condition (K)}.  
    Let $\varphi\in\mathcal{E}(X,\theta)$ be such that $\sup_X\varphi=0$ and 
    \[\MA_\theta(\varphi)\leq a^n\MA_\theta(\psi)^n +fdV.\]
    Then there exists a constant $C>0$ depending on $dV$, $A$, $\omega_X$, $n$, $p$, $a$ and $\|f\|_w$ such that 
    \begin{equation*}
        \varphi\geq a\psi+(1-a)V_\theta-C.
    \end{equation*}
\end{proposition}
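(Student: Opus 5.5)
The plan is to reduce to Proposition~\ref{prop: moreestimate}, which is the $L^p$ version of the statement, by splitting the Orlicz density $f$ with the trick of Lemma~\ref{lem: reduceLp}. Fix $a'\in(a,1)$, say $a'=(1+a)/2$, and write $a=\lambda a'$ with $\lambda=a/a'\in[0,1)$. (If $a=0$ we may just take any $a'\in(0,1)$.) Using the normalization $\MA_\theta=\Vol(\theta)^{-1}\theta^n$ and Lemma~\ref{lem: reduceLp}, the aim is to find a $\theta$-psh $\psi_1$ with minimal singularities and a density $g\in L^2(dV)$ such that
\[
fdV\leq \mu_1 (a'')^n\MA_\theta(\psi_1)+g\,dV,
\]
where $a''$ is small and $\mu_1=\int_X f\,dV$. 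There is one small adjustment: in Lemma~\ref{lem: reduceLp} the measure $fdV$ is a probability measure. Here it is not, so we first normalize by $\mu_1$. If $\mu_1=0$ the $f$-term is absent; otherwise we apply the lemma to $f/\mu_1$, whose Orlicz norm is controlled by $\|f\|_w$ and a lower bound on $\mu_1$. Alternatively, we reproduce the proof of the lemma directly for the measure $fdV$, since that proof only uses $\int_X w(f)dV<\infty$. The bounds are $\|g\|_2\leq C(\|f\|_w,a)$ and $\|\psi_1-V_\theta\|_\infty\leq C$, the latter because condition~(K) makes the function $\gamma$ bounded.

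Next we combine the two singular measures into one. Set $\Psi:=t\psi+(1-t)\psi_1$ for a suitable $t\in(0,1)$. By multilinearity of the non-pluripolar product we have $\theta_\Psi^n\geq t^n\theta_\psi^n+(1-t)^n\theta_{\psi_1}^n$. We choose $t$ and $a''$ so that $a^n\leq (a_*t)^n$ and $\mu_1(a'')^n\Vol(\theta)^{-1}\cdots\leq (a_*(1-t))^n$ (after accounting for the normalization of $\MA_\theta$), with some $a_*<1$. The first condition forces $t\geq a/a_*$. We therefore take $a_*$ close to $1$ and $t\in(a/a_*,1)$, and then choose $a''$ small enough for the second condition, which Lemma~\ref{lem: reduceLp} permits since $a''$ is arbitrary. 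This yields
\[
\MA_\theta(\varphi)\leq a_*^n\MA_\theta(\Psi)+g\,dV .
\]

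Proposition~\ref{prop: moreestimate}, applied with $\Psi-\sup_X\Psi$ in place of $\psi$, then gives
\[
\varphi\geq a_*\Psi+(1-a_*)V_\theta-C=a_*t\psi+a_*(1-t)\psi_1+(1-a_*)V_\theta-C .
\]
Here $\sup_X\Psi$ is bounded because $\psi_1\geq V_\theta-C$. Using $\psi_1\geq V_\theta-C$ once more, we get $\varphi\geq a_*t\,\psi+(1-a_*t)V_\theta-C$.

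The main obstacle is that we obtain the coefficient $a_*t>a$ rather than $a$ itself. This is harmless: since $\psi\leq 0$ and $V_\theta\geq \psi$ (because $V_\theta$ is the supremum of the negative $\theta$-psh functions), increasing the weight on $\psi$ only decreases the right-hand side. Indeed, $a_*t\psi+(1-a_*t)V_\theta\leq a\psi+(1-a)V_\theta$ is the wrong direction, so we must instead choose $a_*t=a$ exactly. This is possible: set $t=a/a_*$ with $a_*\in(a,1)$, so the first condition holds with equality, and choose $a''$ from the second condition. Then the conclusion $\varphi\geq a\psi+(1-a)V_\theta-C$ follows, with $C$ depending on $a$ through $a_*$, $t$ and $a''$.
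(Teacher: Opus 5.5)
Your argument is correct and is essentially the paper's one-line proof made explicit. You decompose $f\,dV\le (a'')^n\MA_\theta(\psi_1)+g\,dV$ as in Lemma~\ref{lem: reduceLp}, which is the mechanism behind Theorem~\ref{thm: main}; you absorb both singular terms into $a_*^n\MA_\theta(\Psi)$ with $\Psi=t\psi+(1-t)\psi_1$ and $t=a/a_*$, using multilinearity of the non-pluripolar product; and you conclude with Proposition~\ref{prop: moreestimate} together with $V_\theta-C\le\psi_1\le V_\theta$, which is exactly how the two cited results combine.

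In a clean write-up, discard the false starts ($a'$, $\lambda$, and $t>a/a_*$, which gives the wrong direction, as you noticed). Also drop the normalization by $\int_X f\,dV$, since it would need a lower bound you do not have, and instead run the construction of Lemma~\ref{lem: reduceLp} directly on $f\,dV$, as you yourself suggest.
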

\begin{proof}
    The proof immediately follows from Proposition~\ref{prop: moreestimate} and Theorem~\ref{thm: main}.
\end{proof}

\subsection{$L^\infty$-estimates in families}
The same argument as previous section therefore produces a slight refinement of the main theorem; cf.~\cite[Theorem 1.9]{GGZ23-families}.

\begin{theorem}\label{thm: family-C0-1}
    Let $(X,\omega_X)$ be a compact K\"ahler manifold. Let $\theta$ be a smooth closed (1,1) form representing a big cohomology class. Assume that $\theta\leq A\omega_X$ for some $A>0$.
    Let $\nu$ and $\mu= fd\nu$ be a probability measures on $X$.
    Assume the following assumptions are satisfied:
\begin{itemize}
    \item[(A1)] there exist $\alpha>0$ and $C_\alpha$ such that for all $u\in\PSH(X,\theta)$,
    \[\int_Xe^{-\alpha(u-\sup_X u)}d\nu\leq C_\alpha; \]
    \item[(A2)]  $0\leq f\in L^w(d\nu)$ with $w$ satisfying \hyperlink{conditionK}{condition~(K)}, $f>0$ $\nu$-almost everywhere, and there exists $C_w>0$ such that $\int_Xw\circ fd\nu\leq C_w$.
\end{itemize}
    Then there exists a unique solution $\varphi\in\mathcal{E}(X,\theta)$ with minimal singularities to the following equation \begin{equation}\label{cmae-big2}
    \MA_\theta(\varphi)=\mu,\quad \sup_X\varphi=-1.
\end{equation}
    More precisely, there is a constant $C$ depending on $A$, $dV$, $\omega_X$, $n$, $\alpha$, $C_\alpha$, $w$ and $C_w$ such that 
    \[ \varphi\geq V_\theta-C.\]
\end{theorem}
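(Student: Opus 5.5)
The proof will rerun the argument of Theorem~\ref{thm: main} with $dV$ replaced by $\nu$ throughout. Skoda's theorem (Theorem~\ref{thm: Skoda/Tian}) will be replaced everywhere by the hypothesis (A1). The Orlicz bound on $f$ will be replaced by the integral bound $C_w$ from (A2). Existence and uniqueness of $\varphi\in\mathcal{E}(X,\theta)$ solving \eqref{cmae-big2} follow as before from \cite{boucksom2010monge,darvas2021log} and Proposition~\ref{prop: unique}, since $\mu$ puts no mass on pluripolar sets. The latter holds because (A1) forces $\nu$ to charge no pluripolar set: a set $\{u=-\infty\}$ with $u\in\PSH(X,\theta)$ has $\nu$-measure zero by integrability of $e^{-\alpha u}$. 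So only the bound $\varphi\geq V_\theta-C$ requires work.

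\textbf{Step 1: the $L^p(\nu)$ case.} I will first check that Lemma~\ref{lem: key} holds verbatim with $fdV$ replaced by $fd\nu$. Its proof uses only envelopes, Lemma~\ref{lemma: ddl511}, Lemma~\ref{lem: maxprin} and the domination principle, none of which involve the reference measure. The auxiliary equation for $v_s$ remains solvable because $\mathbf 1_{\Omega_s}(V_\theta-\varphi-s)^rfd\nu$ is non-pluripolar.

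In the proof of Theorem~\ref{thm: priori-Lp}, the only places where $dV$ enters are the bounds on $\|V_\theta\|_{L^1(\nu)}$, $\|\varphi\|_{L^1(\nu)}$ and $\|v_s\|_{L^{m}(\nu)}$ for $m=\frac{nqp^*}{n+r}$. For any $\theta$-psh $u$ with $\sup_X u\leq 0$, (A1) gives
\[\int_X |u|^m\,d\nu\leq C(m,\alpha)\int_X e^{-\alpha u}d\nu\leq C(m,\alpha)C_\alpha.\]
The De Giorgi iteration and the domination principle then go through unchanged. The uniform constant now depends on $\alpha$, $C_\alpha$, $p$ and $\|f\|_{L^p(\nu)}$. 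Theorem~\ref{thm: main-Lp} and Proposition~\ref{prop: moreestimate} then follow for $\nu$ in the same way. In the approximation argument, the choice of $\varepsilon$ and $q<p$ with $e^{-\varepsilon u}f\in L^q(\nu)$ uniformly comes from Hölder's inequality combined with (A1). In Proposition~\ref{prop: moreestimate}, $\sup_X u_b$ is controlled by the same $L^1(\nu)$ bounds.

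\textbf{Step 2: reduction to $L^2(\nu)$.} I will redo Lemma~\ref{lem: reduceLp} with $\nu$. Solve $\MA_\theta(v)=c\,w(f)\,d\nu$ with $\sup_X v=0$, where $c^{-1}=\int_X w\circ f\,d\nu\leq C_w$. Note also $c^{-1}\geq w(1)$-type bounds via Jensen, since $\int f\,d\nu=1$. Take $\alpha'=\alpha/2$, so that $\|e^{-\alpha' v}\|_{L^2(\nu)}^2\leq C_\alpha$ by (A1). The concave function $\gamma$ is built from $h$ exactly as before, and condition (K) makes $\gamma$ bounded. The bound depends on $c$, hence only on $C_w$, and on $B$, which is chosen from $h$, $a$ and $c$. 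This gives
\[\MA_\theta(\varphi)\leq 2^{-n}\MA_\theta(\psi)+g\,d\nu,\]
with $\psi\simeq V_\theta$ and $\|g\|_{L^2(\nu)}\leq e^B C_\alpha^{1/2}$. Applying the $\nu$-version of Proposition~\ref{prop: moreestimate} with $a=1/2$ yields
\[\varphi\geq \tfrac12(\psi+V_\theta)-C\geq V_\theta-C'.\]

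\textbf{Main obstacle.} The delicate point is Step 1: the existence of $v_s$, of the solutions in the approximation of Theorem~\ref{thm: main-Lp}, and the application of the domination principle all require the right-hand measures to be non-pluripolar and the relevant potentials to lie in $\mathcal{E}(X,\theta)$. For a general $\nu$ this is not automatic. I will derive non-pluripolarity from (A1) as explained above, and then verify that every constant in the De Giorgi step is expressed through $\alpha$, $C_\alpha$ and $\|f\|_{L^p(\nu)}$ rather than through $dV$. The dependence on $\theta$ through $\Vol(\theta)$ enters in the same way as in the original proof.
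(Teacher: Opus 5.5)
Your proposal is correct and is essentially the paper's proof. The paper simply replaces Theorem~\ref{thm: Skoda/Tian} by (A1) and reruns the proof of Theorem~\ref{thm: main}, and your Steps 1--2 spell this out, including the observations that (A1) makes $\nu$ non-pluripolar and that $C_w$ bounds $c^{-1}$ in Lemma~\ref{lem: reduceLp}. Two cosmetic slips: the bound $\int_X e^{-\alpha u}\,d\nu\le C_\alpha$ needs $\sup_X u$ normalized (e.g.\ $0$ or $-1$, as in all your applications), not merely $\sup_X u\le 0$, and uniqueness comes from \cite{dinew2009uniqueness} (via \cite{boucksom2010monge}), not from Proposition~\ref{prop: unique}.
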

\begin{proof}
    We note that Skoda's integrability (or Tian's) theorem (Theorem~\ref{thm: Skoda/Tian}) is replaced by Assumption (A1). We proceed the same as in the proof of Theorem~\ref{thm: main}.
\end{proof}

One can obtain similarly the family version of uniform estimates for solutions to complex Monge-Amp\`ere equations.
\begin{theorem} Let $\mathfrak{X}$ be an irreducible and reduced complex K\"ahler space. 
    Let $\pi:\mathfrak{X}\to\mathbb D$ be a proper holomorphic surjective map such that 
    each fiber $X_t:=\pi^{-1}(t)$ is an $n$-dimensional compact K\"ahler manifold for $t\in\mathbb{D}^*$, and $X_0$ is a
reduced irreducible compact K\"ahler space. Fix $\omega_\mathfrak{X}$ a K\"ahler form on  $\mathfrak{X}$ and set $\omega_t=\omega_\mathfrak{X}|_{X_t}$.  
Let $\{\theta_t\}_{t\in \mathbb{D}_{1/2}}$ be a family of smooth closed big (1,1) forms such that $ \theta_t\leq A\omega_{\mathfrak{X}}$ for $A>0$. Let $(\mu_t)$ be a family of probability measures on $\mathfrak{X}$ such that $\mu_t=f_t\omega_t^n$, where $f_t\in L^{w}(X_t,dV_t)$ with $w$ satisfying \hyperlink{conditionK}{condition (K)}. 
Let $\varphi_t\in\mathcal{E}(X,\theta_t)$ be a solution to
\[ \MA_{\theta_t}(\varphi_t)=\mu_t,\qquad\sup_{X_t}\varphi_t=-1.\]
There exists a constant $C_\mathcal{K}$ such that the following
inequalities 
\[\varphi_t- V_{\theta_t}\geq -C \] where $C$ depends on $\omega_{\mathfrak{X}}$, $n$, $A$, $w$, and an upper bound for $\|f_t\|_w$, is independent of $t\in \mathbb{D}_{1/2}$. 
\end{theorem}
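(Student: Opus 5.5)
The plan is to rerun the proof of Theorem~\ref{thm: main}, now on each fiber $X_t$ with $\theta=\theta_t$ and $\nu=\omega_t^n$. The aim is to reduce everything to Theorem~\ref{thm: family-C0-1}. That theorem gives a bound depending only on $A$, $n$, $\alpha$, $C_\alpha$, $w$ and $C_w$, so it suffices to check that (A1) and (A2) hold on every fiber with constants independent of $t\in\mathbb{D}_{1/2}$. We must also check that every other constant appearing in the proofs of Theorem~\ref{thm: priori-Lp}, Proposition~\ref{prop: moreestimate} and Lemma~\ref{lem: reduceLp} is controlled by these quantities.

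\textbf{Step 1: condition (A2).} Since $\|f_t\|_w$ is uniformly bounded and $w$ is a Young function, the definition of the Luxembourg norm gives
\[\int_{X_t} w(f_t/\|f_t\|_w)\,\omega_t^n\le 1.\]
After rescaling with $M=\sup_t\|f_t\|_w$ and using convexity of $w$, this yields a uniform bound $\int w\circ f_t\,\omega_t^n\le C_w$. Alternatively, we can run the proof with $w(\cdot/M)$, which still satisfies condition~(K) after adjusting $h$. The positivity $f_t>0$ almost everywhere can be arranged by perturbing: replace $f_t$ by $(1-\epsilon)f_t+\epsilon/\Vol_{\omega_t}(X_t)$ and let $\epsilon\to0$. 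This uses stability of the solutions under $L^1$ convergence of densities, which follows from the uniform estimate together with uniqueness.

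\textbf{Step 2: condition (A1), a uniform Skoda--Tian inequality along the family.} Every $\theta_t$-psh function is $A\omega_t$-psh. So we need constants $\alpha>0$ and $C_\alpha$ such that
\[\int_{X_t} e^{-\alpha(u-\sup u)}\omega_t^n\le C_\alpha\]
for all $u\in\PSH(X_t,A\omega_t)$ and all $t\in\mathbb{D}_{1/2}$. This is the uniform integrability result for families of K\"ahler spaces established in \cite{GGZ23-families}, which the paper cites, and I would invoke it directly. We also need the uniform $L^1$ bounds $\int_{X_t}|u-\sup u|\,\omega_t^n\le C$ used in the H\"older steps. These follow from (A1) via $|x|\le \alpha^{-1}e^{\alpha|x|}$.

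\textbf{Step 3: uniformity of the remaining inputs.} The volumes $\int_{X_t}\omega_t^n$ are constant in $t$, because $\omega_{\mathfrak X}$ is closed and $\pi$ is flat. The normalization by $\Vol(\theta_t)$ enters only through the probability measures $\MA_{\theta_t}$, so it does not affect the estimates. It then remains to apply Theorem~\ref{thm: family-C0-1} fiberwise for $t\neq0$, and also at $t=0$ if $X_0$ is included. On the singular fiber $X_0$ one works on a resolution, or simply restricts the statement to $t\in\mathbb{D}^*_{1/2}$. The resulting constant is independent of $t$.

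The main obstacle is Step~2. The existence of a uniform $\alpha$ and $C_\alpha$ near the singular central fiber is genuinely nontrivial, and I would rely on \cite{GGZ23-families} rather than reprove it. The only other point to double-check is that the De Giorgi constant $S_\infty$ in Theorem~\ref{thm: priori-Lp} depends only on these uniform quantities; inspecting that proof shows it does.
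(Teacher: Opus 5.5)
Your reduction is the paper's: verify (A1) and (A2) with constants independent of $t$ and apply Theorem~\ref{thm: family-C0-1} on each fiber. The gap is in Step~2, which is the only substantive step.

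You attribute the uniform Skoda bound directly to \cite{GGZ23-families} and then deduce the uniform $L^1$ bound from it. In that reference the implication runs the other way and is conditional. \cite[Theorem 2.9]{GGZ23-families} produces uniform constants $\alpha, C_\alpha$ from a uniform comparison between supremum and mean,
\[
\sup_{X_t}u_t-C_0\le \frac{1}{V_t}\int_{X_t}u_t\,\omega_t^n\le \sup_{X_t}u_t \quad\text{for all } u_t\in\PSH(X_t,\theta_t).
\]
This comparison is the genuinely nontrivial uniformity statement as $X_t$ degenerates to the singular fiber $X_0$, and in \cite{GGZ23-families} it is only Conjecture 3.1. It was proved in general by Ou \cite[Corollary 4.8]{Ou22-admissible}.

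The paper's proof is exactly this chain. Ou's theorem gives the uniform sup--mean (hence uniform $L^1$) control. \cite[Theorem 2.9]{GGZ23-families} upgrades it to (A1). Theorem~\ref{thm: family-C0-1} then concludes. So your citation, as stated, does not supply (A1); you need Ou's result as the input. The uniform $L^1$ control is what (A1) is derived from, not merely a by-product of it.

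A smaller point in Step~1: for $M>1$, convexity gives $w(Mx)\ge Mw(x)$, which is the wrong direction. Hence a bound on $\|f_t\|_w$ does not control $\int_{X_t} w(f_t)\,\omega_t^n$ unless $w$ satisfies a $\Delta_2$-type growth condition. Your alternative, running the argument with the rescaled weight $w(\cdot/M)$, is the right fix, since that weight still satisfies condition (K) after adjusting $h$.
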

\begin{proof}
    From~\cite[Conjecture 3.1]{GGZ23-families} and \cite[Corollary 4.8]{Ou22-admissible}, we can show that Assumption (A1) in Theorem~\ref{thm: family-C0-1} is satisfied; cf.~\cite[Theorem 1.8]{guedj2025-green}. That is, there exists a constant $C_0>0$ such that
    \[ \sup_X u_t-C_0\leq \frac{1}{V_t}\int_{X_t}u_t\omega_t^n\leq \sup_X u_t\] for every $u_t\in\PSH(X,\theta_t)$. Applying~\cite[Theorem 2.9]{GGZ23-families}, we can find  constants $\alpha>0$ and $C_\alpha>0$ such that
    \[\int_Xe^{-\alpha(u_t-\sup_{X_t} u_t)}\omega_t^n\leq C_\alpha. \]The proof follows from Theorem~\ref{thm: family-C0-1}.
\end{proof}
  
  \subsection{Monge-Amp\`ere equations with prescribed singularities}  
   \begin{theorem}\label{thm: prescribed}
       Let $(X,\omega_X)$ be a compact K\"ahler manifold. Let $\theta$ be a smooth closed (1,1) form representing a big cohomology class. Assume that $\theta\leq A\omega_X$ for some $A>0$. Let $\phi\in\PSH(X,\theta)$ be a model potential.
    Let $\mu= fdV$ be a probability measure on $X$, where $dV$ is the Lebesgue measure, and $f\in L^w(dV)$ with density $w$ satisfying \hyperlink{conditionK}{condition~(K)}. 
    Then there exists a unique solution $\varphi\in\mathcal{E}(X,\theta,\phi)$  to the following equation \begin{equation}\label{cmae-big-pres}
    \MA_\theta(\varphi)=\mu,\quad \sup_X\varphi=-1,\quad \varphi\simeq \phi.
\end{equation}
    More precisely, there is a constant $C$ depending on $\omega_X$, $A$, $dV$, $n$, $w$, and $\|f\|_w$ such that 
    \[ \varphi\geq \phi-C.\]
   \end{theorem}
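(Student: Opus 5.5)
The plan is to repeat the three-step scheme of Section~\ref{sect: estimate}, replacing $V_\theta$ by the model potential $\phi$ and $\mathcal{E}(X,\theta)$ by $\mathcal{E}(X,\theta,\phi)$ throughout. Existence and uniqueness of $\varphi\in\mathcal{E}(X,\theta,\phi)$ solving $\MA_\theta(\varphi)=\mu$ (normalized by $\int_X\theta_\phi^n$) for a non-pluripolar $\mu$ are known from \cite{darvas2018monotonicity,darvas2021log}. So the content is the estimate $\varphi\geq\phi-C$. For this we may assume $\sup_X\phi=0$, since model potentials satisfy this. We then work with the sublevel sets $\Omega_s=\{\varphi<\phi-s\}$.

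\emph{Step 1: the key comparison, relative version.} We solve
\[
v_s\in\mathcal{E}(X,\theta,\phi),\qquad \MA_\theta(v_s)=A_s^{-1}\mathbf 1_{\Omega_s}(\phi-\varphi-s)^rf\,dV,
\]
with $A_s=\int_{\Omega_s}(\phi-\varphi-s)^rf\,dV$. We define $\psi=\phi-\chi(\mathbf 1_{\widetilde\Omega_s}(\phi-\varphi-s))$, $u=P_\theta(\psi)$ and $v=\phi-\gamma(\phi-u)$ exactly as in Lemma~\ref{lem: key}. Lemma~\ref{lemma: ddl511} is already stated relative to $\phi$, and Lemma~\ref{lem: maxprin}, Theorem~\ref{thm: envelope} and Proposition~\ref{prop: domination} (which is stated for $u\in\mathcal{E}(X,\theta,\phi)$) are all available. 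The only new point is to check that $u\in\mathcal{E}(X,\theta,\phi)$. Since $\psi\geq\phi-\chi(\phi-\varphi)$, we get $u\geq P_\theta(\phi-\chi(\phi-\varphi))$. The mass of the latter should be controlled by Lemma~\ref{lem: envelope} together with the fact that $\phi$ is model, using the analogue of Lemma~\ref{lem: lem34DDL25}. Concretely, $P_\theta(b\varphi-(b-1)\phi)\in\mathcal{E}(X,\theta,\phi)$, and a convex $\chi$ with linear growth is dominated by such a combination on the relevant set. The rest of the argument then yields \eqref{eq: key-equation} with $V_\theta$ replaced by $\phi$.

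\emph{Step 2: the $L^p$ case.} With $r=1/p^*$, Hölder's inequality and Theorem~\ref{thm: Skoda/Tian} bound $A_s$ and $\|v_s\|_{L^q}$ uniformly, because all functions involved are $A\omega_X$-psh with $\sup=0$. The bound on $\sup_X v_s$ is the only place where normalization matters: for $\phi$ we have $\sup_X v_s\le0$, and $\|v_s\|$ in $L^q$ follows from Skoda regardless. As in Theorem~\ref{thm: priori-Lp}, this gives $t\phi_\mu(s+t)\leq C_0\phi_\mu(s)^{1+\delta_0}$. De Giorgi's lemma then shows $\theta_\varphi^n(\varphi<\phi-S_\infty)=0$, and Proposition~\ref{prop: domination} yields $\varphi\geq\phi-S_\infty$. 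The a priori estimate is then upgraded to the actual solution via the approximation argument of Theorem~\ref{thm: main-Lp}, with $\max(\varphi,\phi-j)$ and Proposition~\ref{prop: unique} in $\mathcal{E}(X,\theta,\phi)$.

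\emph{Step 3: the Orlicz case.} We redo Lemma~\ref{lem: reduceLp} with $\psi:=\phi-\gamma(\phi-v)$, where $v\in\mathcal{E}(X,\theta,\phi)$ solves the equation with density $c\,w(f)$. Condition (K) makes $\gamma$ bounded, so $\psi\simeq\phi$. We then run Proposition~\ref{prop: moreestimate} relative to $\phi$, using $P_\theta(b\varphi-(b-1)\psi)\geq P_\theta(b\varphi-(b-1)\phi)-C\in\mathcal{E}(X,\theta,\phi)$ by Lemma~\ref{lem: lem34DDL25}. This gives $\varphi\geq\tfrac12(\psi+\phi)-C\geq\phi-C'$.

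The main obstacle I expect is verifying full relative mass of the envelopes $u=P_\theta(\psi)$ in Step 1. Unlike the case $\phi=V_\theta$, $u$ need not have minimal singularities, so one must really use that $\phi$ is a model potential. A second concern is that the constant must not depend on $\phi$: Step 2 only uses Skoda-type bounds, which are uniform, so I expect independence of $\phi$ to hold. If it fails, I would fall back to a constant depending on $\int_X\theta_\phi^n$.
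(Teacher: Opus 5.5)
Your proposal is the paper's proof. The paper cites \cite{darvas2021log} for existence and then simply says to rerun the proof of Theorem~\ref{thm: main} with $V_\theta$ replaced by $\phi$, which is exactly what your three steps spell out. The one point to fix is your ``main obstacle'': the key comparison is only ever invoked in the a priori setting $\varphi\simeq\phi$ (as in Theorem~\ref{thm: priori-Lp}, before the approximation step), where $\phi-\varphi\le C$ gives $\phi-\chi(C)\le u\le\phi+\Lambda$, so $u\simeq\phi$ has full relative mass for free; the domination by $P_\theta(b\varphi-(b-1)\phi)$ that you propose would not work for general $\varphi$ anyway, since $\chi(t)\sim t^{(n+r)/n}$ is superlinear rather than of linear growth.
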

   \begin{proof} It follows from~\cite{darvas2021log} that
       there exists a solution $\varphi\in\mathcal{E}(X,\theta,\phi)$ such that $\MA_\theta(\varphi)=\mu$ and $\sup_X\varphi=-1$. We proceed the same way as in Theorem~\ref{thm: main} (replacing $V_\theta$ with $\phi$) to show that $\varphi\geq \phi-C$ with $C>0$ under control.
   \end{proof}
   \section{Moser-Trudinger and Brezis-Merle type inequalities}\label{sect: inequality}
We always assume $(X,\omega_X)$ is a compact K\"ahler manifold of dimension $n$. Assume $\theta$ is a closed smooth (1,1) form representing a big cohomology class and $\theta\leq A\omega_X$ for a fixed constant $A>0$. 
  
  We establish the Moser-Trudinger type inequality in terms of energy.
\begin{theorem}\label{thm: Moser-Trudinger}
Let $\varphi\in\mathcal{E}^p(X,\theta)$ be such that $\sup_X\varphi=-1$. Then there exists $c,C>0$ depending on $\omega_X$, $A$, $n$, $p$, and $\alpha$-invariant such that
\[\int_X\exp[c|E_p(\varphi)|^{-\frac{1}{n}}(V_\theta-\varphi)^{\frac{n+p}{n}} ]\omega_X^n\leq C. \]
\end{theorem}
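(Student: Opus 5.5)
The plan is to run the comparison argument of Lemma~\ref{lem: key} with $\mu=\MA_\theta(\varphi)$ itself, with $s=0$ and $r=p$, and then integrate using Skoda's theorem (Theorem~\ref{thm: Skoda/Tian}). Write $E:=E_p(\varphi)=\int_X(V_\theta-\varphi)^p\MA_\theta(\varphi)$, which is finite since $\varphi\in\mathcal{E}^p(X,\theta)$. By \cite{boucksom2010monge,darvas2021log} we can solve
\[
\MA_\theta(v)=\frac{(V_\theta-\varphi)^p}{E}\MA_\theta(\varphi),\qquad v\in\mathcal{E}(X,\theta),\quad \sup_Xv=-1 .
\]
This measure charges no pluripolar set. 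Lemma~\ref{lem: key} never used that the right-hand side is absolutely continuous with respect to $dV$, only the domination principle (Proposition~\ref{prop: domination}) and Lemma~\ref{lemma: ddl511}. So the same argument applies: take
\[
\chi(t)=\varepsilon^{-\frac{n+p}{n}}t^{\frac{n+p}{n}}-\Lambda,\qquad \psi=V_\theta-\chi\bigl(\mathbf 1_{\{V_\theta-\varphi>C_\Lambda\}}(V_\theta-\varphi)\bigr),\qquad u=P_\theta(\psi),
\]
with $\varepsilon,\Lambda,C_\Lambda$ as in Lemma~\ref{lem: key} and $A_s$ replaced by $E$. On $\{u<v\}$ this gives $\MA_\theta(u)\le\frac12\MA_\theta(v)$, hence $v\le u\le\psi$ by domination. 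This yields
\[
\frac{n}{n+p}(2E)^{-\frac1n}(V_\theta-\varphi)^{\frac{n+p}{n}}\le V_\theta-v+\frac{n}{n+p}(2E)^{\frac1p}\quad\text{on }X .
\]

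Next I exponentiate. Multiplying by $c=\frac{n+p}{n}2^{1/n}\alpha$, with $\alpha=\alpha(n,A\omega_X)$ the Skoda--Tian exponent, I get
\[
cE^{-\frac1n}(V_\theta-\varphi)^{\frac{n+p}{n}}\le \alpha(V_\theta-v)+\alpha\,2^{\frac1n+\frac1p}E^{\frac1p}.
\]
Since $V_\theta\le0$ and $\sup_Xv=-1$, Theorem~\ref{thm: Skoda/Tian} gives $\int_Xe^{\alpha(V_\theta-v)}\omega_X^n\le\int_Xe^{-\alpha(v-\sup_X v)}\omega_X^n\le C$, uniformly in $\varphi$.

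The main obstacle is the additive term $E^{1/p}$. It comes from the threshold $C_\Lambda\sim E^{1/p}$ below which $\chi$ is not convex with $\chi'\ge1$. The scaling is forced: the exponent of $E$ in $\chi'$ is dictated by the density $(V_\theta-\varphi)^p/E$. So this term cannot be removed by rechoosing $\Lambda$, and I would have to control it separately. My first attempt is to split $X$ into $\{V_\theta-\varphi\le K E^{1/p}\}$ and its complement, with $K$ a large fixed constant.

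On the complement, the bound $(V_\theta-\varphi)^{(n+p)/n}\ge K^{p/n}E^{1/n}(V_\theta-\varphi)$ lets the additive term be absorbed by halving $c$. Indeed,
\[
E^{1/p}\le K^{-1}(V_\theta-\varphi)\le K^{-1-p/n}E^{-1/n}(V_\theta-\varphi)^{\frac{n+p}{n}}
\]
there, so for $K$ large the term is swallowed by the left-hand side.

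On the first set, the integrand is bounded by $e^{cK^{(n+p)/n}E^{1/p}}$. This is acceptable if $E$ is bounded above. Otherwise I would compare with the bound $V_\theta-\varphi\le-\varphi$ and Skoda's estimate $\int_Xe^{-\alpha\varphi}\omega_X^n\le C$. This requires $cE^{-1/n}(V_\theta-\varphi)^{p/n}\le\alpha$ there, i.e.\ choosing $c\le\alpha K^{-p/n}$, which closes the estimate.

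I expect this last balancing, choosing $K$ and then $c$ consistently in the two regions, to be the delicate step. The domination-principle comparison itself should be a verbatim repetition of Lemma~\ref{lem: key}.
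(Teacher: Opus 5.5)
Your proposal follows the paper's proof essentially step for step: the auxiliary equation with density $(V_\theta-\varphi)^p\MA_\theta(\varphi)/E_p(\varphi)$, Lemma~\ref{lem: key} with $r=p$, $s=0$, and the split along $\{V_\theta-\varphi\le K E_p(\varphi)^{1/p}\}$ (the paper's $U_\kappa$). As in the paper, you absorb the additive $E_p(\varphi)^{1/p}$ on the complement and apply Skoda to $\varphi$ on the sublevel set with $c\le \alpha K^{-p/n}$, which works for every value of $E_p(\varphi)$. There is a bookkeeping slip: multiplying the key inequality by $\alpha$ gives $c=\frac{n}{n+p}2^{-1/n}\alpha$, not $\frac{n+p}{n}2^{1/n}\alpha$.

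The one point that you, like the paper, leave implicit is that the domination principle needs $P_\theta(\psi)\in\mathcal{E}(X,\theta)$. The proof of Lemma~\ref{lem: key} justifies this only when $\varphi$ has minimal singularities. To cover general $\varphi\in\mathcal{E}^p(X,\theta)$, run the argument for $\varphi_j=\max(\varphi,V_\theta-j)$. This function satisfies $\sup_X\varphi_j=-1$ and $E_p(\varphi_j)\le E_p(\varphi)$, by plurifine locality on $\{\varphi>V_\theta-j\}$ and equality of total masses. Then let $j\to\infty$ by monotone convergence.
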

This theorem is proved in \cite[Theorem 7]{GuoPhongTong23-estimates} and~\cite[Theorem 2.1]{di2021finite} for the K\"ahler case (which extends the result of Berman and Berndtsson~\cite{BermanBerndtsson22-Moser} for $p=1$), while the case of big cohomology class is treated in~\cite[Theorem 2.11]{DiNezza22-geodesic} and \cite[Theorem 3.5]{DiNezza24-entropy}. We also refer to~\cite{WangWangZhou20-Trudinger} for the local version.

   \begin{proof} We set $\MA_\theta(\varphi)=f\omega_X^n$.
   It follows from~\cite{boucksom2010monge,berman2013variational} that there exists a function $v\in\mathcal{E}(X,\theta)$ solving
   \begin{equation*}
       \MA_\theta(v)=\frac{(V_\theta-\varphi)^pf}{E_p(\varphi)}\omega_X^n,\quad\sup_X v=-1.
   \end{equation*}
Then by the same arguments as in Lemma~\ref{lem: key} (with $r=p$, $s=0$) we have
\[c(n,p)\left(\frac{V_\theta-\varphi}{(2E_p(\varphi))^{\frac{1}{n+p}}} \right)^{\frac{n+p}{n}}\leq V_\theta- v+C(n,p)E_p(\varphi)^{\frac{1}{p}}. \]
The rest of the proof follows the argument in~\cite[Theorem 7]{GuoPhongTong23-estimates}. We set $$U_{\kappa}:=\{V_\theta-\varphi\leq \kappa(2E_p(\varphi))^{\frac{1}{p}}\},$$ where $\kappa:=(2C)^{\frac{n}{n+p}} c^{-\frac{n}{n+p}}$. Then on $X\setminus U_\kappa$, we have
$$ \frac{1}{2}c(n,p)\left(\frac{V_\theta-\varphi}{(2E_p(\varphi))^{\frac{1}{n+p}}} \right)^{\frac{n+p}{n}}\leq V_\theta-v,$$  
and on $U_\kappa$, we have
\[ c(n,p)\left(\frac{V_\theta-\varphi}{(2E_p(\varphi))^{\frac{1}{n+p}}} \right)^{\frac{n+p}{n}}\leq {c(n,p)\kappa^{\frac{p}{n}}(V_\theta-\varphi)}.\]
Multiplying by $\min(c^{-1}\kappa^{-\frac{p}{n}},1/2)\alpha$ and integrating both sides, we obtain
\begin{align*}
    \int_X\exp &\left\{c'(n,p)\alpha  E_p(\varphi)^{-\frac{1}{n}} (V_\theta-\varphi)^{\frac{n+p}{n}}\right\}\omega_X^n\\
  &\qquad   \leq\int_{U_\kappa}{e^{\alpha (V_\theta-\varphi)}}\omega_X^n+ \int_{X\setminus U_\kappa}e^{\alpha(V_\theta- v)}\omega_X^n\leq C_\alpha.
\end{align*}
   \end{proof}

We establish the following Brezis-Merle-type inequalities for Monge-Amp\`ere potentials in big cohomology classes. 
    \begin{theorem}\label{thm: Brezis-Merle}  Let $\mu= f\omega_X^n$ be a probability measure on $X$ and $f\in L^w(dV)$, where $w(t)=t(\log(1+t))^p$ for $p\geq 0$. Assume that the p-Nash entropy $\textrm{Ent}_p(f)\leq B$, defined in~\eqref{eq: entropy}.
       Let $\varphi\in\mathcal{E}(X,\theta)$ be a solution to \eqref{eq: cmae-big}. Then there exist constants $c>0$ depending on  $\omega_X$, $n$, $p$, $A$,
       and $C>0$ depending on  $\omega_X$, $n$, $p$, $A$, $B$, $c$, and $\|f\|_w$ such that
       \begin{itemize}
           \item For $p\in [0,n)$, we have
       \begin{equation}\label{eq: Moser-Trudinger}
           \int_X e^{c(V_\theta-\varphi)^{\frac{n}{n-p}}}\omega_X^n\leq C.
       \end{equation} 
       \item for $p\geq  n$, we have for any $N>0 $ 
       \[ \int_Xe^{c(V_\theta-\varphi)^N}\omega_X^n\leq C.\]    
       \end{itemize}    
       Moreover, we also have the energy-like inequalities 
       \[\int_X(V_\theta-\varphi)^{r} f\omega_X^n\leq C, \]
       where $r=\frac{np}{n-p}$ if $p\in(0,n)$, and $r=N$ any positive constant if $p\geq n$.
   \end{theorem}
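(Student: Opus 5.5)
Following Guo--Phong and Di Nezza--Guedj--Lu, we combine the energy version of the Moser--Trudinger inequality (Theorem~\ref{thm: Moser-Trudinger}) with a Young-type duality between $t\log^p$-entropy and exponential integrability. The plan is a bootstrap: an energy bound for $\varphi$ gives an exponential integrability bound, and the exponential integrability bound improves the energy bound.

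\emph{Step 1 (duality).} For $x,y\ge0$ and $q>0$, we use the elementary inequality
\[
x\,y^{q}\le C_q\bigl(x(\log(1+x))^{p}+e^{y^{q/p}}\bigr)\quad (p>0),
\]
obtained from the Legendre transform of $t(\log t)^p$, which behaves like $e^{s^{1/p}}$ up to lower-order factors. With $x=f$ and $y=V_\theta-\varphi$ this gives
\[
E_r(\varphi)=\int_X (V_\theta-\varphi)^r f\,\omega_X^n\le C\Bigl(B+\|f\|_w+\int_X e^{\varepsilon(V_\theta-\varphi)^{r/p}}\omega_X^n\Bigr),
\]
after inserting a small parameter $\varepsilon$ by rescaling. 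The case $p=0$ is handled directly: it needs only $E_\rho(\varphi)$ for $\rho<1$, which follows from Theorem~\ref{thm: Skoda/Tian} and H\"older-type bounds.

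\emph{Step 2 (Moser--Trudinger).} Theorem~\ref{thm: Moser-Trudinger} with exponent $r$ gives
\[
\int_X\exp\bigl[c\,E_r(\varphi)^{-1/n}(V_\theta-\varphi)^{(n+r)/n}\bigr]\omega_X^n\le C.
\]

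\emph{Step 3 (matching exponents).} We close the loop by requiring $\frac{n+r}{n}\ge \frac{r}{p}$, which is equivalent to $r\le\frac{np}{n-p}$ when $p<n$. At the critical value $r=\frac{np}{n-p}$ we have $\frac{n+r}{n}=\frac{r}{p}=\frac{n}{n-p}$. To avoid circularity we start from a known finite energy and iterate: $E_{r_0}$ with $r_0$ small is bounded via Step 1 using the $\alpha$-invariant, since $e^{\alpha(V_\theta-\varphi)}$ is integrable. Each iteration then raises the exponent $r_k$ as long as $\frac{n+r_k}{n}>\frac{r_{k+1}}{p}$. Taking $r_{k+1}=p\,\frac{n+r_k}{n}$ gives the recursion $r\mapsto \frac{p}{n}(n+r)$, whose fixed point is exactly $\frac{np}{n-p}$ and which converges to it geometrically.

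\emph{Main obstacle: reaching the endpoint.} The iteration only converges to the critical exponent, so we expect the sharp exponent $\frac{n}{n-p}$ to be the hardest part. To reach it we would instead make a direct absorption at $r=\frac{np}{n-p}$. Writing $E=E_r(\varphi)$, Steps 1 and 2 give
\[
E\le C+C\int_X e^{\varepsilon(V_\theta-\varphi)^{n/(n-p)}}\omega_X^n .
\]
Using H\"older/Jensen to interpolate the exponential with parameter $\varepsilon$ against $c E^{-1/n}$, the integral is bounded by $C_1^{\,\varepsilon E^{1/n}/c}$, so that
\[
E\le C+ C\,C_1^{\varepsilon E^{1/n}/c}.
\]
This bounds $E$, but only after first ensuring $E<\infty$, which we get by truncating: replace $\varphi$ by $\max(\varphi,V_\theta-j)$ or use approximations that have minimal singularities. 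Plugging the resulting bound on $E$ back into Step 2 yields \eqref{eq: Moser-Trudinger} and the energy inequality. If this absorption fails because of the exponential growth in $E^{1/n}$, the fallback is to replace the power in Lemma~\ref{lem: key} by an Orlicz-type weight adapted to $t\log^p t$, redoing Lemma~\ref{lem: key} with a general convex $\chi$.

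\emph{Case $p\ge n$.} Here the recursion $r\mapsto\frac{p}{n}(n+r)$ increases without bound. Hence finitely many iterations give $E_r$ for any $r$ and exponential integrability of $(V_\theta-\varphi)^N$ for any $N$, with constants depending on $N$.
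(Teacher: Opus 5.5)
\textbf{The endpoint absorption fails.} Your bootstrap (Young duality plus Theorem~\ref{thm: Moser-Trudinger}, seeded by Theorem~\ref{thm: Skoda/Tian}) is sound for every subcritical exponent and for $p\ge n$. But for $p\in(0,n)$ the theorem \emph{is} the endpoint $r=\frac{np}{n-p}$, with exponent $\frac{n}{n-p}$, and your argument there does not work.

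First, H\"older/Jensen gives $\int_X g^\lambda\,\omega_X^n\le(\int_X g\,\omega_X^n)^\lambda$ only for $\lambda\le 1$. With $\varepsilon$ fixed and $\lambda=\varepsilon E^{1/n}/c$, the regime you must rule out ($E$ large) is exactly $\lambda>1$, where the inequality reverses. Second, even granting it, $E\le C+C\,C_1^{\varepsilon E^{1/n}/c}$ is satisfied by every large $E$, so it bounds nothing. The Orlicz-weight fallback is only a sketch.

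The missing idea is that the constant in your Step 1 blows up like $\varepsilon^{-p}$, so $\varepsilon$ must be chosen depending on $E$. Since $x s^p\le C_p\bigl(x(\log(1+x))^p+e^{s}\bigr)$ for all $x,s\ge 0$, take $x=f$ and $s=cE^{-1/n}(V_\theta-\varphi)^{\frac{n}{n-p}}$, so that $s^p=c^pE^{-p/n}(V_\theta-\varphi)^r$. Integrating gives
\[
c^pE^{1-\frac pn}\le C_p\Bigl(\int_X f(\log(1+f))^p\,\omega_X^n+\int_X\exp\bigl[cE^{-\frac1n}(V_\theta-\varphi)^{\frac{n}{n-p}}\bigr]\omega_X^n\Bigr)\le C
\]
by Theorem~\ref{thm: Moser-Trudinger}. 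This closes precisely because $p<n$. It is the argument of Remark~\ref{rem: Brezis-Merle}, and it makes the iteration unnecessary.

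\textbf{Finiteness, the paper's route, and $p=0$.} Your truncation does give a priori finiteness, but you should say why. With $\varphi_j=\max(\varphi,V_\theta-j)$, which satisfies $\sup_X\varphi_j=-1$, plurifine locality and equality of total masses give
\[
E_r(\varphi_j)=\int_X(V_\theta-\varphi_j)^rf\,\omega_X^n\le j^r.
\]
So the inequality above holds for $\varphi_j$ uniformly in $j$, and monotone convergence finishes. The paper instead shows $\mathcal{E}^r(X,\theta)\subset L^r(\mu)$ and invokes \cite[Theorem 6.2]{darvas2025relative}.

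The paper's main proof avoids energies altogether. Lemma~\ref{lem: reduceLp} and Proposition~\ref{prop: moreestimate} give the pointwise bound $V_\theta-\varphi\le\frac12\gamma(V_\theta-v)+C$ with $\gamma(s)\le C(1+s)^{\frac{n-p}{n}}$. Hence $(V_\theta-\varphi)^{\frac{n}{n-p}}\le C(V_\theta-v+1)$, and Skoda applied to $v$ yields the sharp exponent directly.

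Finally, for $p=0$ the claim is just Theorem~\ref{thm: Skoda/Tian}, since $V_\theta\le 0$. Your assertion that $E_\rho(\varphi)$, $\rho<1$, is controlled via H\"older for a mere $L^1$ density is false, since concentrating densities make it blow up, and it is not needed.
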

   The local version of this theorem can be found in~\cite{WangWangZhou20-Trudinger}, using a PDE approach, while the proof of Berman and Berndtsson~\cite{BermanBerndtsson22-Moser} uses induction on dimension and the ``thermodynamical formalism”~\cite{Berman13-thermodynamical}. The case $p=1$ is treated in \cite{di2021finite,DiNezza24-entropy} as a corollary of the Moser-Trudinger inequality. Actually, their proof also holds for any $p>0$; cf. Remark~\ref{rem: Brezis-Merle} below. The proof we provide here follows from generalized Ko\l odziej's $L^\infty$-estimate (cf. 
   Proposition~\ref{prop: moreestimate} and Lemma~\ref{lem: reduceLp}). The idea goes back to ~\cite{Qiao2025-sharp,liu2024relative}.

\begin{proof}
It is shown in~\cite[Theorem A]{boucksom2010monge} that there exists $v\in\mathcal{E}(X,\theta)$ that solves
\begin{equation}
    \MA_\theta(v)=cw\circ f dV,\quad\sup_Xv=0,\quad\text{where}\; c^{-1}=\int_Xw\circ f dV.
\end{equation}
From Lemma~\ref{lem: reduceLp}, we obtain
\[\MA_\theta(\varphi)\leq 2^{-n}\MA_\theta(\psi)+e^{-\alpha v}dV \]
where $\alpha=\alpha(n,A\omega_X)/2$, $\psi=V_\theta-\gamma(V_\theta-v)$, defined in~\eqref{eq: psi}, and 
\[\gamma(s)=2\alpha^{-1}\|f\|_w^{\frac{1}{n}}\int_B^{\alpha s+B}h^{-\frac{1}{n}}(s)dt,\quad\text{with}\; w(t)=th(\log t). \]
It follows from Proposition~\ref{prop: moreestimate}, that  \[\varphi\geq \frac{\psi+V_\theta}{2}-C\geq V_\theta-\frac{1}{2}\gamma(V_\theta-v)-C, \]
for $C>0$ depending on $dV$, $A$, $\omega_X$, $n$, $\alpha$, $\|e^{-\alpha v}\|_{L^2}$.

We consider three cases:
\begin{enumerate}
    \item If $w(t)=t(\log t)^p$ for $p\in(0,n)$, then $$\gamma(s)=2\alpha^{-1}\|f\|_w^{\frac{1}{n}}\frac{n}{n-p}\left[(\alpha s+B)^{1-\frac{p}{n}}-B^{1-\frac{p}{n}} \right].$$ Therefore, there exist $c>0$ and $C>0$ depending on $c$, $\alpha$, $B$, $\|f\|_w$, such that
    \[\int_X \exp[c(V_\theta-\varphi)^{\frac{n}{n-p}}]dV\leq C_1\int_X e^{\alpha(V_\theta-v)}dV\leq C_1\int_Xe^{-\alpha v}dV\leq C_2,\]  
where the last inequality follows from Skoda's integrability theorem.

    \item If $w(t)=t(\log t)^p$ for $p=n$, then $\gamma(s)=2\alpha^{-1}\|f\|_w^{\frac{1}{n}}[\log (\alpha s+B)-\log B]$. The desired estimate follows.
    \item If $p=0$, then as follows from Lemma~\ref{lem: Young-fuction}, we can find a Young function $w(t)=tg(t)$ such that $\int_Xw(f)dV<+\infty$ where $g(t)\nearrow+\infty$ as $t\to+\infty$. In this case $\gamma(s)\leq \alpha s+C_1$ for some $C_1>0$. We also get the desired estimate.
\end{enumerate}
For the last statement, we apply the H\"older-Young inequality to obtain  
\begin{align*}
    \int_X c^p(V_\theta-\varphi)^{pq}f\omega^n_X\leq \int_Xw\circ f\omega_X^n+C_p\int_X e^{c(V_\theta-\varphi)^q}\omega_X^n,
\end{align*} where $q=\frac{n}{n-p}$ if $p\in(0,n)$ and $q=N$ any positive exponent if $p=n$. This completes the proof.
\end{proof}

\begin{remark}
     For $p>0$, we can apply the Lemma~\ref{lem: key} with $r=\frac{np}{n-p}>0$ ($r$ is any positive
number if $p=n$) to infer that on $\Omega_s=\{\varphi<V_\theta-s\}$,
\[\frac{(V_\theta-\varphi-s)}{A_s^{\frac{1}{n+r}}}\leq C(-v_s+CA_s^{\frac{1}{r}})^{\frac{n}{n+r}}, \] for $C>0$ depending on $\omega_X$, $A$, $n$, $p$, where $A_s=\int_{\Omega_s}(V_\theta-\varphi-s)^rf\omega_X^n$ and $v_s$ solves
\[\MA_\theta(v_s)=\frac{\mathbf{1}_{\Omega_s}(V_\theta-\varphi-s)^r}{A_s}f\omega_X^n,\quad\sup_X v_s=0. \]
We can proceed with the arguments as in Steps 3, 4 of~\cite[Theorem 1]{GuoPhong24-entropy} (see also~\cite[Proposition 5.1]{Qiao2025-sharp}) to finish the proof.
\end{remark}

\begin{remark}\label{rem: Brezis-Merle}
  When $p>0$, the Brezis-Merle inequalities (Theorem~\ref{thm: Brezis-Merle}) are consequences of the Moser-Trudinger inequality (Theorem~\ref{thm: Moser-Trudinger}). The proof follows from~\cite[Theorem 3.4]{di2021finite}.

 Indeed, if $p\in(0,n)$, we claim that $\mathcal{E}^r(X,\theta)\subset L^r(X,\mu)$ with $r=\frac{np}{n-p}$. Fix $v\in\mathcal{E}^r(X,\theta)$ with $\sup_X v=-1$. By the Moser-Trudinger inequality (Theorem~\ref{thm: Moser-Trudinger}), since $1+\frac{r}{n}=\frac{n}{n-p}$ we have 
 \[\int_Xe^{c(V_\theta-v)^{\frac{n}{n-p}}}\omega_X^n<\infty. \]
 We next apply the H\"older-Young inequality to obtain  
\begin{align*}
    \int_X c^p(V_\theta-v)^{r}\MA_\theta(\varphi)\leq \int_Xw\circ f\omega_X^n+C_p\int_X e^{c(V_\theta-v)^{\frac{n}{n-p}}}\omega_X^n<\infty,
\end{align*} which proves the claim by~\cite[Theorem 6.2]{darvas2025relative}. Hence, $\varphi\in\mathcal{E}^r(X,\theta)$.

Using the H\"older–Young inequality again, we obtain
\begin{align*}
    \int_X c^p(V_\theta-\varphi)^{r}|E_r(\varphi)|^{-\frac{p}{n}}f\omega_X^n\leq \int_Xw\circ f\omega_X^n+C_p\int_X e^{c(V_\theta-v)^{\frac{n}{n-p}}|E_r(\varphi)|^{-\frac{1}{n}}}\omega_X^n\leq C_1,
\end{align*} thanks to the Moser-Trudinger inequality, where $C_1>0$ depends on $\omega_X$, $n$, $p$ and $\|f\|_w$. This implies that $c^pE_r(\varphi)^{1-\frac{p}{n}}\leq C_1$, so $E_r(\varphi)\leq C_2$.     By the Moser-Trudinger inequality again, we obtain
\begin{align*}
    \int_Xe^{c'(V_\theta-\varphi)^\frac{n}{n-p}}\omega_X^n\leq C_3,
\end{align*} where $c'=cC_2^{-\frac{1}{n}}$. 

When $p=n$, we choose $r=nN$ for any positive constant $N$. The proof follows the same as above.

\end{remark}

	\bibliographystyle{alpha}
	\bibliography{bibfile}	
	
\end{document}